\newcommand*\patchAmsMathEnvironmentForLineno[1]{%
  \expandafter\let\csname old#1\expandafter\endcsname\csname #1\endcsname
  \expandafter\let\csname oldend#1\expandafter\endcsname\csname end#1\endcsname
  \renewenvironment{#1}%
     {\linenomath\csname old#1\endcsname}%
     {\csname oldend#1\endcsname\endlinenomath}}%
\newcommand*\patchBothAmsMathEnvironmentsForLineno[1]{%
  \patchAmsMathEnvironmentForLineno{#1}%
  \patchAmsMathEnvironmentForLineno{#1*}}%
\@date \else {\vskip3ex \centering\footnotesize\@date\par\vskip1ex}\fi
\else \@footnotetext{\@setdate}\fi}
\newtheorem{theorem}{Theorem}[section]
\newtheorem{lemma}[theorem]{Lemma}
\newtheorem{proposition}[theorem]{Proposition}
\newtheorem{corollary}[theorem]{Corollary}
\theoremstyle{definition}
\newtheorem{define}[theorem]{Definition}
\newtheorem{remark}[theorem]{Remark}
\newcommand\ba[1]{\begin{align}\label{#1}}
\newcommand\ea{\end{align}}
\newcommand\bas{\begin{align*}}
\newcommand\eas{\end{align*}}
\newcommand\ee{\end{equation}}
\newcommand\be{\begin{equation}}
\newcommand\ees{\end{equation*}}
\newcommand\bes{\begin{equation*}}
\mathchardef\emptyset="001F
\newcommand{\e}{\varepsilon}
\newcommand{\Om}{\Omega}
\newcommand{\R}{{\mathbb R}}
\newcommand{\rn}{{{\R}^N}}
\newcommand{\wto}{\rightharpoonup}
\newcommand{\wtos}{\mathrel{\mathop{\rightharpoonup}\limits^*}}
\newcommand{\wtogs}{\mathrel{\mathop{\to}\limits^{\text {ss}}}}
\newcommand{\wtogsp}{\mathrel{\mathop{\to}\limits^{\text{ss-}(p)}}}
\newcommand{\N}{{\mathbb{N}}}
\newcommand\norm[1]{\left\|#1\right\|}
\newcommand{\abs}[1]{\left\lvert#1\right\rvert} 
\newcommand{\fsp}[1]{\left(#1\right)} 
\newcommand{\fmp}[1]{\left[#1\right]}
\newcommand{\flp}[1]{\left\{#1\right\}}
\newcommand{\vp}{\varphi}
\newcommand{\limn}{\lim_{n\rightarrow\infty}}
\newcommand{\lime}{\lim_{\e\rightarrow0}}
\newcommand{\divg}{{\operatorname{div}}}
\newcommand{\seqn}[1]{\left\{#1\right\}_{n=1}^\infty}
\newcommand{\liminfn}{{\liminf_{n\to\infty}}}
\newcommand{\ir}{{\lfloor r\rfloor}}
\newcommand{\IM}{{\operatorname{IM}}}
\newcommand\M{\mathbb M}
\definecolor{CMUred}{RGB}{153,0,0}
\definecolor{CMUgreen}{RGB}{0,135,81}
\definecolor{CMUblue}{RGB}{0,51,127}
\definecolor{Pblue}{RGB}{87,158,208}
\newcommand{\argmin}{{\operatorname{arg\,min}}}
\newcommand{\hnmo}{{\mathcal H^{N-1}}}
\newcommand\mb{\mathcal{M}_b}
\newcommand{\ellp}{{\ell^p}}
\def\argmin{\mathop{\rm arg\, min}}
\DeclareRobustCommand{\Om}{\Omega}
\numberwithin{equation}{section}
\newcommand{\normmm}[1]{{\left\vert\kern-0.25ex\left\vert\kern-0.25ex\left\vert #1 
    \right\vert\kern-0.25ex\right\vert\kern-0.25ex\right\vert}}
\newcommand{\BLK}{\color{black}}
\title{Real order (an)-isotropic total variation in image processing - Part I: Analytical Analysis and Functional properties}
\author[P. Liu] {Pan Liu}
 \address[Pan Liu]{Centre of Mathematical Imaging and Healthcare,\\ 
Department of Pure Mathematics and Mathematical Statistics, \\
 University of Cambridge,\\
  Wilberforce Road, Cambridge CB3 0WA, UK}
 \email[P. Liu] {panliu.0923@maths.cam.ac.uk}
 \author[X.Y. Lu] {Xin Yang Lu}
 \address[Xin Yang Lu]{Department of Mathematical Sciences, Lakehead University,
 955 Oliver Road, Thunder Bay, ON, Canada\\
 AND\\
 Department of Mathematics and Statistics
Burnside Hall, McGill University\\805 Sherbrooke Street West, Montreal, QC, Canada }
 \email[X. Lu] {xlu8@lakeheadu.ca}
\subjclass[2010]{26B30, 94A08, 	47J20}
\keywords{total variation, fractional derivative,  calculous of variations}
\date{\today}                                           
\begin{document}


\begin{abstract}
In this paper, a variational, multi-dimensional model for image reconstruction is proposed, in which the regularization term
 consists of the $r$-order (an)-isotropic total variation seminorms $TV^r$,
with $r\in \R^+$, defined via the Riemann-Liouville fractional derivative. Key properties, such as the lower semi-continuity and compactness with respect to both the function
and the order of derivative $r$, are studied. 
This paper, the first of our series of works on analytical and numerical aspects of the model, as well as the learning of optimal order $r$ for particular imaging tasks, 
provides a comprehensive analysis of the behavior of $TV^r$ in the space of functions with bounded (fractional order) total variation.

\end{abstract}

\maketitle
\tableofcontents

\thispagestyle{empty}

%
%
\section{Introduction}\label{sec:intro}
Methods combining Semi-supervised Learning (\emph{SSL}) approaches and variational models have recently received increasing attention in image processing and inverse problems. 
The \emph{SSL} approaches are based on the given prior knowledge of the problem in terms of a training set. In image denoising, for example, 
such prior knowledge might be a pair images: a corrupted image $u_\eta$, and the corresponding clean image $u_c$. 
{\em SSL} approaches 
produce optimal recovery models, by minimizing an assessment function based on the underlying variational problems.\\\\
An example of such learning schemes in image denoising is a bilevel training scheme 
(see \cite{domke2012generic, domke2013learning, tappen2007utilizing,tappen2007learning}) coupled with the celebrated \emph{ROF} imaging denosing model (\cite{rudin1992nonlinear}) as follows.
\begin{flalign}
\text{Level 1. }&\,\,\,\,\,\,\,\,\,\,\,\,\,\,\,\,\,\,\,\,\,\,\,\,\,\,\,\,\,\,\,\,\,\,\,\,\,\tilde\alpha\in\argmin\flp{\norm{u_\alpha-u_c}_{L^2(Q)}^2:\,\,\alpha\in\R^+}\tag{$\mathcal B$-L1}\label{intro_B_train_level1}&\\
\text{Level 2. }&\,\,\,\,\,\,\,\,\,\,\,\,\,\,\,\,\,\,\,\,\,\,\,\,\,\,\,\,\,\,\,\,\,\,\,\,\,u_{\alpha}:=\argmin\flp{\norm{u-u_\eta}_{L^2(Q)}^2+\alpha TV(u):\,\,u\in BV(Q)}.\tag{$\mathcal B$-L2}\label{intro_B_train_level2}&
\end{flalign}
We will refer to it as scheme $\mathcal B$.
In the \emph{ROF} model (embedded in \eqref{intro_B_train_level2}) we denoted by $Q=(0,1)^N$ the domain of the image, $\alpha\in\mathbb R^+$ the \emph{intensity parameter}, and $TV(u)$ 
 the \emph{total variation} of $u$ (also known as \emph{regularizer} in image processing problems). In the \emph{ROF} problem alone, without additional supervision, 
  choosing $\alpha$ too large would often result in the loss of relevenat image details; on the other hand, 
  choosing $\alpha$ too small would produce an image with too much noise. Such issue is attacked by using the \emph{SSL} scheme $\mathcal B$, 
  in which the optimal intensity parameter $\tilde\alpha$ from Level 1 is obtained by minimizing the assessment function, plus the $L^2$-distance from the clean image $u_c$.\\\\
However, it is well known that the \emph{ROF} model in \eqref{intro_B_train_level2} suffers from several drawbacks: 
particularly relevant is the stair-casing effect, and unfortunately, the optimal image $u_{\tilde\alpha}$ reconstructed from scheme $\mathcal B$ inherits such defects. 
One choice to mitigate such drawback is to use a fractional order total variation $TV^r$, in which $r\in\R^+$ represents the order of derivative (when $r=1$ we 
get the standard $TV$ used in \eqref{intro_B_train_level2}). The definition of fractional order total variation will be presented
in the next section. We first present an overview of the current state of art.\\\\
There are several types of point-wise defined non-integer order derivative, such as the \emph{Riemann-Liouville} (RL) derivative, \emph{Caputo} derivative, \emph{Marchaud} derivative, 
\emph{Gr\"unwald-Letnikov} (GL) derivative (see for instance \cite{MR1219954,podlubny2000matrix}). By combining such non-integer order derivative and the standard
total variation $TV$, we introduce the the total $r$ order variation-based model (see e.g., \cite{chen2013fractional, zhang2015total,MR3684877}). This
also defines the following imaging denosing model: minimize
\be\label{mumfordshahori_frac}
 ROF^r(u):=\norm{u-u_\eta}_{L^2(Q)}^2+\alpha TV^r(u), 
\ee
among 
\be\label{BV_too_large}
u\in BV^r(Q):=\flp{u\in L^1(Q):\,\, TV^r(u)<+\infty}.
\ee
The existence of minimizers of \eqref{mumfordshahori_frac}, for fixed $r\in(1,2)$ and $\alpha\in\R^+$, has been proven in \cite{zhang2015total}, along with numerical algorithms to solve such minimizer. \\\\
Apart from \cite{liu2016weightedreg}, all existing approaches for fractional order imaging processing models are formulated and analyzed with
fixed derivative order $r\in(1,2)$, and the main goal is to study the minimizing problem \eqref{mumfordshahori_frac}. However, numerical simulations show that for different images, 
different derivative orders might give different results. Thus, it is relevant to study how to obtain the optimal order $r$ to get the best reconstructed image.\\\\
This paper is the first of our series of works in imaging processing models, where both the optimal order $r$ and intensity parameter $\alpha$ are subject to optimization.
 We include the order $r$ into the learning scheme \eqref{intro_B_train_level1}-\eqref{intro_B_train_level2}, and introduce the following new scheme, which we denote by $\mathcal T$.
\begin{flalign}
\text{Level 1. }&\,\,\,\,\,\,\,\,\,\,\,\,\,\,\,\,\,\,\,\,\,\,\,\,(\tilde \alpha, \tilde r)\in\argmin\flp{\norm{u_{\alpha,r}-u_c}:\,\,{(\alpha,r)\in \R^+\times \R^+}}\tag{$\mathcal T$-L1}\label{S_scheme_L1_intro}&\\
\text{Level 2. }&\,\,\,\,\,\,\,\,\,\,\,\,\,\,\,\,\,\,\,\,\,\,\,\,u_{\alpha,r}:=\argmin\flp{\norm{u-u_\eta}_{L^2(Q)}^2+\alpha  TV^r(u):\,\,{u\in L^1(Q)}}.\tag{$\mathcal T$-L2}\label{S_scheme_L2_intro}&
\end{flalign}
The new scheme $\mathcal T$ allows us to simultaneously optimize both the intensity parameter $\alpha$ and the order $r$ of derivative. Moreover, 
since we shall study the properties of sequences $\seqn{TV^{r}}$, where each $r$ could be either fractional or integer, 
we will refer to our new functional $TV^r$ as the \emph{real order} total variation seni-norm. Thus $TV^r$ can be considered as an extension of the classic
total variation semi-norm, which corresponds to the case $r=1$.\\\\
We emphasize that, although the $RL$ fractional derivatives are defined point-wise, due to the lack of information on the differential properties of the fractional integral 
and the singularities at the boundary in definition (see \cite{MR3684877,zhang2015total}), the space $BV^r$ introduced in \eqref{BV_too_large} is sometimes too large to work with. 
Therefore, to develop a satisfactory theory of fractional total variation we restrict the discussion to the smaller space $SV^r$ (see Definition \ref{strict_conv_BVr} below), 
which enjoys several advantages, such as approximation via smooth functions. We shall later show in Section \ref{image_application_sec} that, under certain restrictions (compatible in imaging processing problem) on the boundary values, 
 the restricted space $SV^r$ is equivalent to the full space $BV^r$.\\\\
In this article we focus mainly on the theoretical analysis of $TV^{r}$, with varying orders $r\in \R^+$. The existence of optimal solutions of the scheme $\mathcal T$, 
as well as the numerical realization and demonstration, will be the main topic of our second work \cite{panxinyang2018realordertraining} of this series. \\\\
The aim of this article is threefold. In the first part we study the basic properties of functions with bounded fractional order
total variation, such as the lower semi-continuity (\emph{l.s.c.}) with respect to the function, and compact embedding from $BV^r$ to $L^1$. In particular, we prove the following result:
\begin{theorem}[see Theorem \ref{ATV_real_embedding}]\label{real_embedding_intro}
Let $s\in (0,1)$ be given. Assume the sequence $\seqn{u_n}\subset SV^s(Q)\cap BV(Q)$ satisfies
\be
\sup\flp{\norm{u_n}_{L^\infty(\partial Q)}+\norm{u_n}_{BV^s(Q)}:\,\, n\in\N}<+\infty.
\ee
Then, there exists $u\in BV^s(Q)$ such that, upon subsequence, $u_n\to u$ strongly in $L^1(Q)$.
\end{theorem}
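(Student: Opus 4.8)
The plan is to establish a compactness result for $SV^s(Q)\cap BV(Q)$ by extending each $u_n$ to a slightly larger domain, exploiting the fractional-derivative bound together with the $L^\infty$ control on $\partial Q$, and then applying a compact embedding into $L^1$. First I would fix a cube $\tilde Q \supset\supset Q$ and show that the sequence $\seqn{u_n}$ can be extended to $\seqn{\tilde u_n}$ on $\tilde Q$ in such a way that $\sup_n \norm{\tilde u_n}_{L^1(\tilde Q)}<+\infty$ and the extended fractional variation $TV^s(\tilde u_n;\tilde Q)$ stays bounded; this is precisely where the boundary control $\norm{u_n}_{L^\infty(\partial Q)}$ enters, because the Riemann--Liouville derivative of order $s\in(0,1)$ has an integrable singularity at the left endpoint whose size is governed by the boundary trace. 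One should be careful here: the singularity of the RL fractional integral at $\partial Q$ is exactly the reason the ambient space $BV^r$ is "too large," so the extension must be done componentwise in each coordinate direction, using the structure of $SV^s$ (which, by definition, is the space where smooth approximation is available) to guarantee that the cut-and-paste extension does not create uncontrolled mass near $\partial Q$.

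Second, I would use the smooth-approximation property of $SV^s(Q)$ to reduce to the case of smooth $u_n$: for each $n$ pick $v_n\in C^\infty$ with $\norm{u_n-v_n}_{L^1(Q)}\le 1/n$, $|TV^s(v_n) - TV^s(u_n)|\le 1/n$, and comparable boundary values, so that proving $v_n\to v$ in $L^1$ suffices. Third — the analytic heart of the argument — I would derive a uniform bound on translates: for $h$ small,
\be
\norm{\tau_h v_n - v_n}_{L^1(Q')} \le C\,\omega(|h|)\bigl(\norm{v_n}_{L^1(\tilde Q)} + TV^s(v_n;\tilde Q)\bigr)
\ee
on any $Q'\subset\subset Q$, where $\omega(t)\to 0$ as $t\to 0$ and $C$ is independent of $n$. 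For $s\in(0,1)$ this is obtained by writing $v_n$ in terms of its fractional integral of order $s$ applied to the RL derivative $D^s v_n$ (a convolution with the Riesz-type kernel $t^{s-1}/\Gamma(s)$), estimating the difference of the kernel under translation — which costs a factor $|h|^s$ up to logarithmic corrections — plus handling the boundary term via the $L^\infty(\partial Q)$ bound. With such an equi-continuity-of-translations estimate in hand, the Fréchet--Kolmogorov--Riesz compactness theorem yields a subsequence converging strongly in $L^1_{loc}(Q)$, and the $L^1$ bound near $\partial Q$ (from the extension step) upgrades this to strong convergence in $L^1(Q)$.

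Finally, to identify the limit as an element of $BV^s(Q)$, I would invoke the lower semicontinuity of $TV^s$ with respect to $L^1$ convergence (established earlier in the paper), which gives $TV^s(u)\le\liminf_n TV^s(u_n)<+\infty$, hence $u\in BV^s(Q)$; together with $u\in L^1(Q)$ this completes the proof. The main obstacle I anticipate is the translation estimate near the boundary: unlike the classical ($s=1$, $BV$) case where translates are controlled by the total variation alone, here the fractional integral representation $u = I^s D^s u + (\text{boundary polynomial})$ means the boundary/initial-value term is not negligible, and one must show its contribution to $\norm{\tau_h v_n - v_n}_{L^1}$ is both small in $h$ and uniformly bounded in $n$ — this is exactly the point where the hypothesis $u_n\in SV^s$ (rather than merely $BV^s$) and the uniform $L^\infty(\partial Q)$ bound are indispensable, and getting the constants to be $n$-independent will require the quantitative form of the smooth-approximation / extension lemmas for $SV^s$.
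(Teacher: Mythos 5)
Your plan coincides with the paper's own proof: extend $u_n$ by zero, establish a uniform translation estimate of order $|h|^s$ in which the $L^\infty(\partial Q)$ bound controls the boundary contribution, apply the Riesz--Kolmogorov compactness criterion (Theorem \ref{lp_compactness}), and identify the limit in $BV^s(Q)$ via the lower semicontinuity of $TV^s$ (Theorem \ref{weak_star_comp_s}). The only real difference is technical: the paper obtains the translation bound slice-by-slice from a fractional mean value formula for a ``revisited'' Riemann--Liouville derivative (Proposition \ref{one_d_cuoweixiangjian}) rather than from the convolution representation $u=\mathbb I^s d^s u+(\text{boundary term})$ that you propose, but both routes produce the same $|h|^s\bigl(TV^s(u_n)+\norm{u_n}_{L^\infty(\partial Q)}\bigr)$ modulus and the overall architecture is identical.
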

In the second part we investigate the functional properties of $\seqn{TV^{r_n}}$ with respect to a sequence of orders $\seqn{r_n}\subset \R^+$. In particular, the main theorem is:
\begin{theorem}[see Theorem \ref{compact_lsc_r_tv}]\label{compact_lsc_r_intro}
Given sequences $\seqn{r_n}\subset \R^+$ and $\seqn{u_n}\subset L^1(Q)$ such that $r_n\to r\in\R^+\cup \flp{0}$, and assume there exists $p\in(1,+\infty]$ such that 
\be
\sup\flp{\norm{u_n}_{L^p(Q)}+TV^{r_n}(u_n):\,\,n\in\N}<+\infty.
\ee
Then, the following statements hold.
\begin{enumerate}[1.]
\item
There exists $u\in BV^r(Q)$, such that, upon subsequence, $u_n\wto u$ weakly in $L^p(Q)$ and 
\be
\liminf_{n\to\infty} TV^{r_n}(u_n)\geq TV^r(u).
\ee
\item
Assuming in addition that ${u_n}\in SV^{r_n}(Q)\cap BV(Q)$, $\norm{u_n}_{L^\infty(\partial Q)}$ is uniformly bounded, and $r_n\to r>0$, we have that
\be
u_n\to u\text{ strongly in }L^1(Q).
\ee
\end{enumerate}
\end{theorem}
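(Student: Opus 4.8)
The plan is to establish the two assertions separately, relying on the dual description of $TV^r$ and on the compact‑embedding result of Theorem \ref{real_embedding_intro}. \emph{Part 1 (lower semicontinuity).} Since $p>1$ and $Q$ is bounded, the uniform bound on $\norm{u_n}_{L^p(Q)}$ yields, along a subsequence (not relabelled), $u_n\wto u$ weakly in $L^p(Q)$ (weakly-$*$ if $p=\infty$), and hence also $u_n\wto u$ weakly in $L^1(Q)$. To pass to the limit in the seminorm I would use the dual formulation of $TV^r$ — a supremum of pairings $\int_Q v\,\psi\,dx$, where $\psi$ ranges over the images of admissible test functions under the adjoint of the order-$r$ Riemann--Liouville operator, subject to $\norm{\psi}_{L^\infty}\le 1$. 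Fixing one admissible competitor $\varphi$ and writing $\psi_n$, $\psi$ for its images under the order-$r_n$, order-$r$ adjoint operators, the key point — which I expect to be available from the analysis of the dependence of fractional derivatives on the order carried out in the earlier sections — is that $\psi_n\to\psi$ strongly in $L^{p'}(Q)$ as $r_n\to r$. Weak--strong convergence of the pairing then gives $\int_Q u_n\psi_n\,dx\to\int_Q u\psi\,dx$, so that $\liminf_{n\to\infty}TV^{r_n}(u_n)\ge\int_Q u\psi\,dx$; taking the supremum over admissible $\varphi$ produces $\liminf_{n\to\infty}TV^{r_n}(u_n)\ge TV^r(u)$, and since this liminf is finite we conclude $u\in BV^r(Q)$.

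\emph{Part 2 (strong $L^1$ compactness).} By Part 1 we already have, along a subsequence, $u_n\wto u$ weakly in $L^1(Q)$, so it suffices to upgrade this to strong $L^1(Q)$ convergence. Since $r_n\to r>0$, we may fix $s_0\in(0,\min\{1,r\})$ with $r_n\ge s_0$ for all large $n$. Using the comparison inequalities between total variations of different (real) orders — together with the additional hypotheses $u_n\in SV^{r_n}(Q)\cap BV(Q)$ and the uniform bound on $\norm{u_n}_{L^\infty(\partial Q)}$ — I would derive a uniform estimate $\sup_n\flp{\norm{u_n}_{L^\infty(\partial Q)}+\norm{u_n}_{BV^{s_0}(Q)}}<+\infty$ and check that $u_n\in SV^{s_0}(Q)\cap BV(Q)$. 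Theorem \ref{real_embedding_intro}, applied with $s=s_0$, then yields a further subsequence converging strongly in $L^1(Q)$; its limit coincides with $u$ by uniqueness of the weak $L^1$-limit, and since the limit is independent of the chosen sub-subsequence, the whole subsequence converges strongly to $u$.

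\emph{Main obstacle.} The delicate step is the uniform lower-order control in Part 2: bounding $TV^{s_0}(u_n)$ in terms of $\norm{u_n}_{L^1(Q)}$ and $TV^{r_n}(u_n)$ with constants independent of $n$, and simultaneously ensuring $u_n\in SV^{s_0}(Q)$, is most subtle precisely when $r$ is a positive integer — so that the integer part of $r_n$ need not stabilize and $r_n$ may approach $r$ from either side — and when the Riemann--Liouville fractional integral introduces singularities near $\partial Q$; this is exactly where the restriction to $SV$ and the uniform boundary bounds are essential, and where the quantitative estimates of the earlier sections must be brought to bear. A secondary point is making the continuity in $r$ of the adjoint operator applied to test functions quantitative enough (convergence in $L^{p'}(Q)$, not merely pointwise) to license the limit passage in Part 1.
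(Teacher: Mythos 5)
Part 1 of your proposal is essentially the paper's argument: weak $L^p$ compactness gives $u_n\wto u$, and for each fixed test function $\vp$ the paper shows (Lemma \ref{linftyds} plus dominated convergence) that $\divg^{r_n}\vp\to\divg^{r}\vp$ strongly in $L^{p'}(Q)$, so the weak--strong pairing passes to the limit and the supremum over $\vp$ gives the liminf inequality. That part is fine.

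The problem is in Part 2, and it sits exactly where you flagged the ``main obstacle'' but did not resolve it. Your plan is to fix $s_0\in(0,\min\{1,r\})$ with $r_n\ge s_0$ and derive a uniform bound on $\norm{u_n}_{BV^{s_0}(Q)}$ from $\norm{u_n}_{L^1(Q)}+TV^{r_n}(u_n)$. The comparison results actually available are of two kinds: (i) $TV^{s_n}(u_n)\le C\,[\norm{u_n}_{L^1(Q)}+TV^{r_n}(u_n)]$ where $s_n$ is the \emph{fractional part} of $r_n$, with $C$ independent of $n$ (Theorem \ref{liangbiankongzhizhongjian} and Proposition \ref{zhendexiao_le}); and (ii) the monotonicity $TV_{\ell^1}^{s_0}(u)\le TV_{\ell^1}^{s_n}(u)$, valid only for $0<s_0<s_n<1$ (Proposition \ref{bergounioux2017fractional}). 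Chaining these controls $TV^{s_0}(u_n)$ only when $s_0\le s_n$. If $r$ is a positive integer and $r_n\searrow r$ (say $r_n=1+s_n$ with $s_n\searrow 0$), the fractional parts tend to $0$, no fixed $s_0>0$ satisfies $s_0\le s_n$ eventually, and the monotonicity points the wrong way; your uniform lower-order bound is then not obtainable, so the reduction to the fixed-order compact embedding (Theorem \ref{real_embedding_intro}) genuinely breaks. This is not a technicality to be ``brought to bear'' from earlier sections --- a different idea is required.

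The paper closes precisely this case with a different device. With $r_n=1+s_n$, $s_n\searrow 0$, one first notes that $\nabla^{s_n}u_n$ is bounded in $BV(Q)$: testing gives $TV_{\ell^1}(\nabla^{s_n}u_n)\le TV_{\ell^1}^{1+s_n}(u_n)$, while $\norm{\nabla^{s_n}u_n}_{L^1(Q)}=TV_{\ell^1}^{s_n}(u_n)$ is bounded by (i); classical $BV$ compactness then yields $\nabla^{s_n}u_n\to\bar u$ strongly in $L^1(Q)$. Second, using the slicewise representation $w_n(t)=c_{1,n}t^{s_n}/\Gamma(s_n+1)+\mathbb I^{1+s_n}\phi_n(t)$ from Theorem \ref{thm_MR3144452} (the coefficient of the singular term $t^{s_n-1}$ is eliminated by the boundary bound), together with $d^{s_n}w_n=c_{1,n}+\mathbb I^{1}\phi_n$ and the strong continuity of $r\mapsto\mathbb I^{r}$, one shows $\norm{u_n-\nabla^{s_n}u_n}_{L^1(Q)}\to 0$, whence $u_n\to\bar u=u$ strongly in $L^1(Q)$. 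Without this (or an equivalent) argument for fractional parts degenerating to zero, Part 2 of your proof is incomplete; in the complementary case where the $s_n$ stay bounded away from $0$, your reduction does match the paper's.
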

Lastly, in Section \ref{image_application_sec} we study functions $u\in BV^r(Q)$ vanishing on the boundary. 
In particular, Theorem \ref{smooth_approx_bdy_condition} shows that for a function $u\in BV^r(Q)$ such that all the terms
$\nabla^k u$ ($0\le k\leq \ir$), vanish on the boundary, there exists a sequence $\seqn{u_n}\subset C^\infty(Q)\cap BV^r(Q)$ such that 
\be
u_n\to u\text{ strongly in }L^1(Q)\text{ and }TV^r(u_n)\to TV^r(u).
\ee
The paper is organized as follows. In Section \ref{tvs_setting_prop} we collect some notations and preliminary results on 
the fractional order derivative. In Section \ref{main_body_section} we analyze the main properties of the 
fractional $r$-order total variation, with $r\in\R^+\setminus \N$ fixed. The compact embedding, the lower semi-continuity with respect to the order $r$, 
and relation between the fractional order total variation and its integer order counterpart will be 
the subjects of Section \ref{sec_functional_lsc}. 
Finally, in Section \ref{image_application_sec}, we show that the space $SV^r(Q)$ and $BV^r(Q)$ are equivalent under certain boundary conditions.
\section{Preliminary results on fractional order derivatives}\label{tvs_setting_prop}
Through this article, $r\in \R^+$ will denote a positive constant, 
and we will always write $r=\ir+s$ where $\ir$ denotes the integer part of $r$, and $s\in[0,1)$. \\\\
We collect the definitions of fractional order derivative in dimension one as follows.
\begin{define}[the fractional order derivative on unit interval]\label{frac_der_def}
Let $I:=(0,1)$ and $x\in I$ be given.
\begin{enumerate}[1.]
\item
The \emph{left (right)-sided Riemann-Liouville} derivative of order $r=\ir+s\in\R^+$ (see  \cite{MR1347689}) is defined by (resp.)
\be\label{R_L_frac_1d_left}
d^{r}_{L}w(x) = \frac{1}{\Gamma(1-s)}\fsp{\frac d{dx}}^{\ir+1}\int_0^x \frac{w(t)}{(x-t)^{s}}dt,
\ee 
and 
\be
d^{r}_{R}w(x) = \frac{(-1)^{\ir+1}}{\Gamma(1-s)}\fsp{\frac d{dx}}^{\ir+1}\int_x^1 \frac{w(t)}{(t-x)^{s}}dt,
\ee
where $\Gamma(\cdot)$ denotes the $\Gamma$-function, i.e. 
\[\Gamma(s):=\int_0^\infty e^{-t}t^{s-1}dt.\] 
\item
The \emph{left (right)-Riemann-Liouville} fractional order integrals (of order $r\in\R^+$) are defined by (resp.)
\be\label{r_int_frac_def}
(\mathbb I_{L}^r w)(x):=\frac{1}{\Gamma(r)}\int_0^x \frac{w(t)}{(x-t)^{1-r}}dt\,\text{ and }
\,({\mathbb I_R^r} w)(x):=\frac{1}{\Gamma(r)}\int_x^1 \frac{w(t)}{(x-t)^{1-r}}dt,
\ee 
\item
The {\em left (right)-sided \emph{Caputo} derivative} (of order $r$)
is defined by (resp.)
\be
d^r_{L,c} w(x):=\frac{1}{\Gamma(1-s)}\int_0^x \frac{(d^{\ir+1})w(t)}{(x-t)^{s}}dt.
\ee
and
\[
d^{r}_{R,c}w(x):= \frac{(-1)^{\ir+1}}{\Gamma(1-s)}\int_x^1 \frac{(d^{\ir+1})w(t)}{(t-x)^{s}}dt\]
\end{enumerate}
\end{define}
We collect some immediate results regarding Definition \ref{frac_der_def} from literatures
\begin{remark} 
Let $w\in C^\infty(\bar I)$ and $\phi\in C_c^\infty(I)$ be given.
\begin{enumerate}[1.]
\item
The following integration by parts formula hold (see, e.g.,\cite{1751-8121-40-24-003}), i.e.,
\be
\int_I w\,d_{R,c}^r \phi\,dx=(-1)^{\ir+1}\int_I (d_L^rw)\, \phi\,dx
\ee
and
\be
\int_I w\,d_{L,c}^r \phi\,dx=(-1)^{\ir+1}\int_I (d_R^rw)\, \vp\,dx
\ee
\item
The RL derivative and Caputo derivative are equivalent on compact supported functions (\cite[Theorem 2.2]{MR1347689}), i.e., for every $x\in I$, there holds
\be\label{caputo_eq_RL}
d^{r}_L\phi(x)= d^{r}_{L,c}\phi(x)\text{ and }d^{r}_R\phi(x)= d^{r}_{R,c}\phi(x).
\ee
Thus, we could re-write Assertions 1 to be
\be
\int_I w\,d_{R}^r \phi\,dx=(-1)^{\ir+1}\int_I (d_L^rw)\, \phi\,dx
\ee
and
\be
\int_I w\,d_{L}^r \phi\,dx=(-1)^{\ir+1}\int_I (d_R^rw)\, \vp\,dx
\ee
\item
For any functions $w_1$, $w_2\in C^\infty(\bar I)$, and $a$, $b\in\R$. the linearity holds, i.e.,
\be\label{linearity_frac}
d^r(a w_1(x)+b w_2(x))=a d^r w_1(x)+b d^r w_2(x)
\ee
\end{enumerate}
\end{remark}
\begin{remark}\label{left_use_whole}
In what follows we shall only work with Left-sided fractional order derivative/integrations, as the argument for Right-sided holds analogously. Thus, to simplify or notations, we shall drop the underlying $L$ and $R$ and only write $d^r$, $\mathbb I^r$, instead of $d^r_L$ and $\mathbb I^r_L$ unless specific otherwise.
\end{remark}
We next recall the definition of representable functions.
\begin{define}[Representable functions]\label{frac_represent_I}
We denote by $\mathbb I^r(L^1(I))$, $r>0$, the space of functions $f$ represented by the $r$-order derivative of a summable function. That is,
\be
\mathbb I^r(L^1(I)):=\flp{f\in L^1(I):\,\, f=\mathbb I^rw,\,\,w\in L^1(I)}.
\ee
\end{define}
Next we recall several theorems on representable functions in one dimension from \cite{MR1347689}.
\begin{theorem}\label{thm_MR1347689}
For convenience, we unify our notation by writing $\mathbb I^r = d^{-r}$ for $r<0$.
\begin{enumerate}[1.]
\item\label{cite_represent_frac}
{\cite[Theorem 2.3]{MR1347689}}
Condition $w(x)\in \mathbb I^r(L^1(I))$, $r>0$ is equivalent to
\be
(\mathbb I^{\ir+1-r}w)(x)\in W^{\ir+1,1}(I),\qquad r=\ir+s \label{inte_frac_cond}
\ee
and
\be
(d^l (\mathbb I^{\ir+1-r}w))(0)=0, \qquad l=0,1,\cdots,\ir\label{bdy_frac_cond}
\ee
\item\label{MR1347689T2_5}
{\cite[Theorem 2.5]{MR1347689}}
Let $w\in L^1(I)$ be given. The relation
\be
\mathbb I^{r_1} \mathbb I^{r_2} w=\mathbb I^{{r_1}+r_2}w
\ee
is valid if one of the following conditions holds:
\begin{enumerate}[1.]
\item
$r_2>0$, ${r_1}+r_2>0$, provided that $w\in L^1(I)$,
\item
$r_2<0$, ${r_1}>0$, provided that $w\in \mathbb I^{-r_2}(L^1(I))$,
\item
${r_1}<0$, ${r_1}+r_2<0$, provided that $w\in \mathbb I^{-{r_1}-r_2}(L^1(I))$.
\end{enumerate}
\item\label{semigroup_frac_int}
{\cite[Theorem 2.6]{MR1347689}}
Let $r\in\R^+$ be given.
\begin{enumerate}[1.]
\item
The fractional order integration operator $\mathbb I^r$ forms a semigroup in $L^p(I)$, $p\geq 1$, which is continuous in the uniform topology for all $r> 0$,
and strongly continuous for all $r\geq 0$. 
\item
It holds (see \cite[(2.72)]{MR1347689}) 
\be\label{eq_semigroup_frac_int}
\norm{\mathbb I^r w}_{L^1(a,b)}\leq (b-a)^r\frac{1}{r\Gamma(r)}\norm{w}_{L^1(a,b)}.
\ee
\end{enumerate}
\end{enumerate}
\end{theorem}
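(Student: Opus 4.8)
The three assertions are the basic structural facts about Riemann--Liouville operators; the proof I have in mind reconstructs the classical arguments, arranged so that each piece is available when the next one needs it. Writing $k_\rho(x):=x^{\rho-1}/\Gamma(\rho)$ for $x>0$, $\rho>0$, one has $\mathbb I^\rho w=k_\rho\ast w$ (convolution on the half-line, restricted to $I$). The order in which I would proceed is: estimate \eqref{eq_semigroup_frac_int}; boundedness of $\mathbb I^r$ on $L^p(I)$; the positive-index case of Assertion \ref{MR1347689T2_5}; the inversion identity $d^\rho\mathbb I^\rho=\mathrm{id}$; Assertion \ref{cite_represent_frac}; the negative-index cases of Assertion \ref{MR1347689T2_5}; and finally the continuity statements in Assertion \ref{semigroup_frac_int}.

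Estimate \eqref{eq_semigroup_frac_int} is Young's convolution inequality, $\norm{\mathbb I^r w}_{L^1(a,b)}\le\norm{k_r}_{L^1(0,b-a)}\norm{w}_{L^1(a,b)}$, together with the elementary identity $\norm{k_r}_{L^1(0,b-a)}=(b-a)^r/(r\Gamma(r))$; the same inequality with $L^p$ in place of $L^1$ on the right gives boundedness of each $\mathbb I^r$ on $L^p(I)$. For the semigroup law $\mathbb I^{r_1}\mathbb I^{r_2}w=\mathbb I^{r_1+r_2}w$ with $r_1,r_2>0$ I would write out the iterated integral, apply Fubini (licensed by the $L^1$ bound just obtained), and evaluate the inner integral $\int_\tau^x(x-t)^{r_1-1}(t-\tau)^{r_2-1}\,dt$ by the substitution $t=\tau+(x-\tau)\sigma$, which turns it into $(x-\tau)^{r_1+r_2-1}\Gamma(r_1)\Gamma(r_2)/\Gamma(r_1+r_2)$ via the Beta integral; collecting constants leaves exactly $\mathbb I^{r_1+r_2}w$. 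Combined with the fundamental theorem of calculus this yields the inversion identity $d^\rho\mathbb I^\rho=\mathrm{id}$ on $L^1(I)$ for every $\rho>0$, since $d^\rho=(d/dx)^{\lfloor\rho\rfloor+1}\mathbb I^{\lfloor\rho\rfloor+1-\rho}$ and $\mathbb I^{\lfloor\rho\rfloor+1-\rho}\mathbb I^\rho=\mathbb I^{\lfloor\rho\rfloor+1}$.

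For Assertion \ref{cite_represent_frac}, the forward implication is immediate: if $w=\mathbb I^r\varphi$ with $\varphi\in L^1(I)$, then $\mathbb I^{\,\ir+1-r}w=\mathbb I^{\,\ir+1-r}\mathbb I^r\varphi=\mathbb I^{\,\ir+1}\varphi$ by the positive-index semigroup law, and the $(\ir+1)$-fold iterated integral of an $L^1$ function lies in $W^{\ir+1,1}(I)$ with all derivatives of order $\le\ir$ vanishing at $0$, which is \eqref{inte_frac_cond}--\eqref{bdy_frac_cond}. Conversely, set $g:=\mathbb I^{\,\ir+1-r}w$; under \eqref{inte_frac_cond}--\eqref{bdy_frac_cond}, Taylor's formula with integral remainder gives $g=\mathbb I^{\,\ir+1}(g^{(\ir+1)})$ with $g^{(\ir+1)}\in L^1(I)$, hence $\mathbb I^{\,\ir+1-r}w=\mathbb I^{\,\ir+1-r}\mathbb I^r(g^{(\ir+1)})$; applying $d^{\,\ir+1-r}$ to both sides and invoking the inversion identity cancels the outer operator and leaves $w=\mathbb I^r(g^{(\ir+1)})\in\mathbb I^r(L^1(I))$. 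The remaining, negative-index cases of Assertion \ref{MR1347689T2_5} then follow by substituting the representation $w=\mathbb I^\sigma\varphi$ afforded by the stated hypothesis, using $d^\sigma\mathbb I^\sigma=\mathrm{id}$, and reducing everything to the positive-index case; the three separate hypotheses are exactly what makes these substitutions legitimate.

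Finally, norm-continuity of $r\mapsto\mathbb I^r$ on $L^p(I)$ for $r>0$ reduces, via Young's inequality again, to $\norm{k_{r'}-k_r}_{L^1(0,1)}\to0$ as $r'\to r$, which I would obtain by splitting the integral near and away from the origin and using continuity of $\rho\mapsto1/\Gamma(\rho)$; strong continuity at $r\ge0$ (with $\mathbb I^0=\mathrm{id}$) I would prove first on a dense subclass such as $C^\infty(\bar I)$, where one may differentiate under the integral sign and the case $r=0$ is handled by noting that $k_{r'}$ acts as an approximate identity at the origin, and then extend to all of $L^p(I)$ using the uniform operator bound from \eqref{eq_semigroup_frac_int}. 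The main obstacle, in my view, is not any single computation but the bookkeeping in Assertions \ref{cite_represent_frac} and \ref{MR1347689T2_5}: the identity $d^\rho\mathbb I^\rho=\mathrm{id}$ fails off $\mathbb I^\rho(L^1(I))$, so one must keep track at every step of which function class the operators act on — which is precisely why the semigroup theorem must be split into three separate cases.
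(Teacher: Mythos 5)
The paper offers no proof of this statement: Theorem \ref{thm_MR1347689} is a verbatim collection of results cited from \cite{MR1347689} (Theorems 2.3, 2.5, 2.6 and formula (2.72) there), so there is no internal argument to compare yours against. Your reconstruction is essentially the classical one and is correct: writing $\mathbb I^\rho$ as convolution with $k_\rho(x)=x^{\rho-1}/\Gamma(\rho)$, the estimate \eqref{eq_semigroup_frac_int} is Young's inequality together with $\norm{k_r}_{L^1(0,b-a)}=(b-a)^r/(r\Gamma(r))$; the positive-index semigroup law is Fubini plus the Beta integral; the left-inverse identity $d^\rho\mathbb I^\rho=\mathrm{id}$ on $L^1(I)$ follows from $\mathbb I^{\lfloor\rho\rfloor+1-\rho}\mathbb I^\rho=\mathbb I^{\lfloor\rho\rfloor+1}$ and the fundamental theorem of calculus; and your two directions of Assertion \ref{cite_represent_frac} (Taylor's formula with integral remainder for the converse) as well as the reduction of the negative-index cases of Assertion \ref{MR1347689T2_5} to the positive-index one via the substitution $w=\mathbb I^\sigma\varphi$ are exactly the standard arguments. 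One correction to your closing remark: the identity that fails off the range $\mathbb I^\rho(L^1(I))$ is $\mathbb I^\rho d^\rho=\mathrm{id}$ (integration is only a right inverse of differentiation on that range), not $d^\rho\mathbb I^\rho=\mathrm{id}$, which holds on all of $L^1(I)$ and which you yourself use in that generality when cancelling $\mathbb I^{\ir+1-r}$ in the converse of Assertion \ref{cite_represent_frac}; the bookkeeping point you are making is right, but the displayed identity is the wrong one of the two compositions.
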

We close this section by introducing the notations for (partial) fractional order derivative in multi-dimensions. %
\begin{define}[fractional order partial derivative]
 Given $x=(x_1,\ldots,x_N)\in Q=(0,1)^N\subset\rn$ and $u\in C^\infty(Q)$, we define the $r$-order partial derivative: 
\begin{align*}
\partial^r_1 u(x) := \frac {d^r}{dt} u(t,x_2,x_3,\ldots,x_N),
\end{align*}
and similarly for $\partial_i^ru(x)$, $i=2,\ldots, N$.
\end{define}
%
%
%
%
%
We next recall the integration by parts formula of fractional order from \cite{chen2013fractional}: for $u\in C^\infty(Q)$, $v\in C_c^\infty(Q)$, it holds
\be
\int_Q u\,\partial^s v \,dx = - \int_Q \partial^s u\, v\,dx.
\ee
Then, by Theorem \ref{thm_MR1347689}, Assertion \ref{MR1347689T2_5}, we have the following multi-index integration by parts formula:
\be
\int_Q u\,\partial^r v \,dx = (-1)^{\ir+1} \int_Q \partial^r u\, v\,dx.
\ee
We conclude this section by recalling the following technical lemma. Let $k\in\N$ be given.
\begin{lemma}[\cite{MR1347689}]\label{power_function_s}
For every $s\in[0,1)$ and $x\in I$ the following assertions hold.
\begin{enumerate}[1.]
\item
We have
\be
d^s_L x^k = \frac{\Gamma(k+1)}{\Gamma\fsp{k-s+1}}x^{k-s}.
\ee
\item
If $k=0$, i.e., $x^k=1$ a constant, then $d^s_L x^0=0$ if and only if $s\in\N$.
\item
For all $s\in(0,1)$, we have $d^s_L x^{s-1}=0$.
\end{enumerate}
\end{lemma}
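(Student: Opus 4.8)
The plan is to reduce all three assertions to a single explicit evaluation of the left Riemann--Liouville fractional integral of a power function, after which differentiation and the elementary recurrence/reflection identities for $\Gamma$ finish the argument. Recall that for $r=s\in[0,1)$ one has $\ir=0$, so by Definition \ref{frac_der_def},
\be
d^s_L w(x)=\frac{1}{\Gamma(1-s)}\frac{d}{dx}\int_0^x \frac{w(t)}{(x-t)^{s}}\,dt=\frac{d}{dx}\big(\mathbb I^{1-s}w\big)(x),
\ee
so everything rests on computing $\int_0^x t^{\beta}(x-t)^{-s}\,dt$ for the relevant exponents $\beta$.

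First I would prove Assertion 1. For $k\in\N$ (so $\beta=k\ge 0$) and $s\in(0,1)$, the substitution $t=x\tau$ gives
\be
\int_0^x \frac{t^k}{(x-t)^{s}}\,dt=x^{k+1-s}\int_0^1 \tau^k(1-\tau)^{-s}\,d\tau=x^{k+1-s}\,B(k+1,\,1-s),
\ee
where $B$ is the Euler Beta function; the integral converges precisely because $k\ge 0$ and $s<1$, and equals $\Gamma(k+1)\Gamma(1-s)/\Gamma(k+2-s)$. Since the right-hand side is $C^1$ on $I$, differentiating and using $(k+1-s)\Gamma(k+1-s)=\Gamma(k+2-s)$ yields $d^s_L x^k=\frac{\Gamma(k+1)}{\Gamma(k-s+1)}x^{k-s}$, which is the claimed formula. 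The degenerate case $s=0$ is immediate, since then $d^0_L$ is the identity and both sides equal $x^k$.

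Assertion 2 follows by setting $k=0$ in the computation behind Assertion 1: $d^s_L x^0=\frac{1}{\Gamma(1-s)}x^{-s}$, which vanishes for every $x\in I$ if and only if $1/\Gamma(1-s)=0$. As $1/\Gamma$ is entire with zeros exactly at the non-positive integers, this happens if and only if $1-s\in\{0,-1,-2,\ldots\}$, i.e. $s\in\N$. For Assertion 3, note first that $x^{s-1}\in L^1(I)$ because $s-1>-1$, so $d^s_L x^{s-1}$ is well defined; repeating the substitution $t=x\tau$ with $\beta=s-1$ gives
\be
\int_0^x \frac{t^{s-1}}{(x-t)^{s}}\,dt=\int_0^1 \tau^{s-1}(1-\tau)^{-s}\,d\tau=B(s,1-s)=\Gamma(s)\Gamma(1-s),
\ee
a constant independent of $x$ (the integral converges since $s\in(0,1)$). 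Hence $d^s_L x^{s-1}=\frac{1}{\Gamma(1-s)}\frac{d}{dx}\big(\Gamma(s)\Gamma(1-s)\big)=0$.

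There is no substantial obstacle here: the only points that need a line of care are the convergence of the Beta-type integrals (this is exactly what forces $k\ge0$ and $s<1$ in Assertion 1, resp. $s\in(0,1)$ for the exponent $s-1$ in Assertion 3), the legitimacy of differentiating in $x$ (trivial, since the fractional integral has been evaluated in closed form as a monomial or a constant), and the bookkeeping with the $\Gamma$-identities. I would also add the remark that Assertion 3 says $x^{s-1}$ plays, for $d^s_L$, the role a constant plays for $d^1$; this is consistent with Theorem \ref{thm_MR1347689}, Assertion \ref{cite_represent_frac}, since $\mathbb I^{1-s}(x^{s-1})\equiv\Gamma(s)\ne 0$ violates the boundary condition \eqref{bdy_frac_cond}, so that $x^{s-1}\notin\mathbb I^{s}(L^1(I))$.
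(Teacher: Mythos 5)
Your proposal is correct: the paper itself gives no proof of this lemma, simply citing \cite{MR1347689}, and your Beta-function computation $\int_0^x t^{\beta}(x-t)^{-s}\,dt = x^{\beta+1-s}B(\beta+1,1-s)$ followed by differentiation is exactly the standard derivation found in that reference, with all three assertions (including the observation that Assertion 3 reflects $\mathbb I^{1-s}(x^{s-1})\equiv\Gamma(s)\neq 0$ failing \eqref{bdy_frac_cond}) handled correctly. The only caveat is one you inherit from the lemma's statement rather than introduce: within the stated range $s\in[0,1)$ the condition ``$s\in\N$'' in Assertion 2 is effectively vacuous (at $s=0$ the derivative of the constant is $1$, not $0$), so that assertion is really meant for general $s\in\R^+$, as your argument via the zeros of $1/\Gamma$ correctly shows.
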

%
%
\section{The space of functions with bounded fractional-order total variation}\label{main_body_section}
\subsection{Total variation with different underlying Euclidean norm}
We start by recalling the definition of Euclidean $\ell^p$-norm on $\rn$. Let $p\in[1,+\infty)$ and $x=(x_1,x_2,\ldots,x_N)\in \rn $ be given, we define 
\be
\abs{x}_{\ell^p}:=(|x_1|^p+|x_2|^p+\cdots |x_N|^p)^{1/p},
\ee
and we note that $\abs{\cdot}_{\ell^p}$ are equivalent norms on $\rn$. That is, for any $1\leq q<p\leq \infty$, we have
\be\label{x_eu_equivalence}
\abs{x}_{\ell^p}\leq\abs{x}_{\ell^p}\leq N^{1/q-1/p}\abs{x}_{\ell^p}.
\ee
\begin{define}\label{TV_ATV_integer}
Let $u\in L^1(Q)$ be given. We recall the following definition.
\begin{enumerate}[1.]
\item
The first order total variation with underlying Euclidean $\ell^p$-norm:
\be
TV_{\ell^p}(u):=\sup\flp{\int_Qu\, \divg \vp \,dx:\,\,\vp\in C_c^\infty(Q;\rn)\text{ and }\abs{\vp}_{\ell^p}^\ast\leq 1},
\ee
where $\abs{\cdot}_{\ell^p}^\ast$ denotes the dual norm associated with $\abs{\cdot}_{\ell^p}$. 
\item
The second order total variation with underlying Euclidean $\ell^p$-norm:
\be
TV_{\ell^p}^2(u):=\sup\flp{\int_Qu\, \divg^2 \vp \,dx:\,\,\vp\in C_c^\infty(Q;\M^{N\times N})\text{ and }\abs{\vp}_{\ell^p}^\ast\leq 1}.
\ee
\item
Generally, the $k$-th order total variation, $k\in\N$, with underlying Euclidean $\ell^p$-norm:
\be
TV_{\ell^p}^k(u):=\sup\flp{\int_Qu\, \divg^k \vp \,dx:\,\,\vp\in C_c^\infty(Q;\M^{N\times (N^{k-1})})\text{ and }\abs{\vp}_{\ell^p}^\ast\leq 1}.
\ee
\end{enumerate}
\end{define}
For example, when $N=2$, we have $\vp=[\vp_1,\vp_2;\vp_3,\vp_4]$ and 
\begin{align*}
\divg^2 \vp = \divg(\divg(\vp_1,\vp_2),\divg(\vp_3,\vp_4))&=\divg(\partial_1 \vp_1+\partial_2 \vp_2, \partial_1\vp_3+\partial_2\vp_4)\\
&=\partial_1\partial_1\vp_1+\partial_1\partial_2\vp_2+\partial_2\partial_1 \vp_3+\partial_2\partial_2\vp_4.
\end{align*}
\textbf{Remark.} We could recovery the classical \emph{isotropic total variation} ($TV$) and \emph{an-isotropic total variation} ($ATV$) by letting $p=2$ and $p=1$, respectively. Moreover, we note that in dimension one, all $TV_{\ell^p}$ are the same.\\\\
We recall the usual trace operator for function with bounded total variation.
\begin{theorem}[{\cite[Theorem 2, Page 181]{evans2015measure}}]\label{usual_trace}
Suppose $u\in BV(Q)$. Then for $\hnmo$ a.e. $x_0\in\partial Q$,
\be
\lime\fint_{B(x_0,\e)\cap Q}\abs{u- T[u](x)}dx=0,
\ee
where $T[\cdot]$ denotes the standard trace operator. In another word, we have
\be
T[u](x_0)=\lime \fint_{B(x_0,\e)\cap Q}u(x)\,dx.
\ee
\end{theorem}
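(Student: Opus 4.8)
Since Theorem~\ref{usual_trace} is the classical $BV$ trace theorem, the cleanest route is simply to cite \cite[Theorem~2, p.~181]{evans2015measure}; still, here is how I would reconstruct the proof, exploiting that $Q=(0,1)^N$ has flat faces. The plan is to first produce $T[u]\in L^1(\partial Q)$ by approximation and one-dimensional slicing, and then upgrade this $L^1$ information to the pointwise (Lebesgue-type) statement via a density argument. As the $(N-2)$-dimensional edges and corners of $Q$ are $\hnmo$-negligible in $\partial Q$, it suffices to argue near a point $x_0$ in the relative interior of a face; after relabelling coordinates that face lies in $\{x_N=0\}$ and locally $Q$ coincides with the half-cylinder $C^+:=C'\times(0,h)$, with $C'\subset\R^{N-1}$ a small open cube and $\partial Q$ corresponding to $C'\times\{0\}$. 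No bi-Lipschitz straightening is needed.

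\emph{Step 1: existence of the trace.} By the standard approximation of $BV$ functions by smooth ones, choose $v_k\in C^\infty(\overline{C^+})$ with $v_k\to u$ in $L^1(C^+)$ and $\int_{C^+}|Dv_k|\to\int_{C^+}|Du|$; then $|Dv_k|\wsc|Du|$ weakly-$*$ as measures. For smooth $v$ and $0<t_1<t_2<h$, the fundamental theorem of calculus along vertical segments gives
\bes
\int_{C'}\abs{v(x',t_2)-v(x',t_1)}\,dx'\le\int_{C'\times(t_1,t_2)}\abs{\partial_N v}\,dx\le|Dv|\bigl(\overline{C'}\times[t_1,t_2]\bigr).
\ees
Passing to the limit along a subsequence for which $v_k(\cdot,t)\to u(\cdot,t)$ in $L^1(C')$ for a.e.\ $t$ (Fubini), and using the weak-$*$ convergence, we get for a.e.\ $0<t_1<t_2<h$ that $\norm{u(\cdot,t_2)-u(\cdot,t_1)}_{L^1(C')}\le|Du|(\overline{C'}\times[t_1,t_2])$. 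Since $|Du|$ is a \emph{finite} measure not charging $\{x_N=0\}$, the right-hand side tends to $0$ as $t_1,t_2\to0^+$; hence $t\mapsto u(\cdot,t)$ is Cauchy in $L^1(C')$, and we define $T[u]\in L^1(C'\times\{0\})$ as its limit. Patching over faces gives $T[u]\in L^1(\partial Q)$, and the localized version of the same bound yields $\norm{u(\cdot,x_N)-T[u]}_{L^1(\omega)}\le|Du|\bigl(\overline{\omega}\times(0,x_N]\bigr)$ for open $\omega\Subset C'$ and a.e.\ $x_N$.

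\emph{Step 2: the pointwise statement.} Call $x_0=(x_0',0)$ \emph{good} if (i) $x_0'$ is a Lebesgue point of $T[u]$ on the face, and (ii) $\lim_{\e\to0}\e^{-(N-1)}|Du|(B(x_0,\e)\cap C^+)=0$. Property (i) holds $\hnmo$-a.e.\ by Lebesgue differentiation in $\R^{N-1}$, while (ii) holds $\hnmo$-a.e.\ because a finite Radon measure concentrated on the open half-space $\{x_N>0\}$ has zero upper $(N-1)$-density $\hnmo$-a.e.\ on $\{x_N=0\}$; this in turn follows from the standard estimate $\hnmo(\{x_0\in\{x_N=0\}:\ \limsup_{\e\to0}\e^{-(N-1)}\mu(B(x_0,\e))\ge\lambda\})\le c_N\lambda^{-1}\mu(\{0<x_N<\rho\})$ applied to $\mu=|Du|$, letting $\rho\to0$. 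Now fix a good $x_0$, set $B_\e^+:=B(x_0,\e)\cap C^+$ ($|B_\e^+|\asymp\e^N$) and $B_\e':=B(x_0',\e)$, and split $\abs{u(x',x_N)-T[u](x_0)}\le\abs{u(x',x_N)-T[u](x')}+\abs{T[u](x')-T[u](x_0')}$. Integrating the second term over $B_\e^+$ (whose $x_N$-sections have length $\le\e$) and using (i),
\bes
\int_{B_\e^+}\abs{T[u](x')-T[u](x_0')}\,dx\le\e\int_{B_\e'}\abs{T[u](x')-T[u](x_0')}\,dx'=\e\cdot o(\e^{N-1}).
\ees
Integrating the first term, using $B_\e^+\subset B_\e'\times(0,\e)$, the localized slicing bound of Step~1, and finally (ii),
\bes
\int_{B_\e^+}\abs{u(x',x_N)-T[u](x')}\,dx\le\int_0^\e\norm{u(\cdot,x_N)-T[u]}_{L^1(B_\e')}\,dx_N\le\e\cdot|Du|\bigl(B(x_0,3\e)\cap C^+\bigr)=\e\cdot o(\e^{N-1}).
\ees
Dividing by $|B_\e^+|\asymp\e^N$ gives $\fint_{B(x_0,\e)\cap Q}\abs{u-T[u](x_0)}\,dx\to0$, the first assertion; the identity $T[u](x_0)=\lim_{\e\to0}\fint_{B(x_0,\e)\cap Q}u\,dx$ then follows from $\bigl|\fint(u-T[u](x_0))\bigr|\le\fint\abs{u-T[u](x_0)}$.

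\emph{Main obstacle.} The genuinely analytic point is Step~2: passing from $L^1$-convergence of the traces to the pointwise blow-up requires the density estimate~(ii) together with careful bookkeeping in the slicing --- the ``a.e.\ $t$'' and ``a.e.\ $x_N$'' caveats and the weak-$*$ convergence $|Dv_k|\wsc|Du|$ tested on thin slabs with $|Du|$-null boundary. Everything else is routine localization; and, as already noted, the whole statement is classical.
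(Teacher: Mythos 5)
The paper does not prove this statement at all: it is recalled verbatim from the cited reference \cite[Theorem 2, p.~181]{evans2015measure}, so ``citing'' is indeed the paper's entire proof. Your reconstruction (slicing along the normal direction to build the $L^1$ trace, then upgrading to the pointwise statement at $\hnmo$-a.e.\ boundary point via the Lebesgue-point property of $T[u]$ and the vanishing upper $(N-1)$-density of $\abs{Du}$ on $\partial Q$) is correct and is essentially the standard argument given in that reference.
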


\begin{define}\label{isotropic_frac_variation}
We define the \emph{$r$-order total variation} $TV_{\ell^p}^r(u)$ on $u\in L^1(Q)$ as follows.
\begin{enumerate}[1.]
\item
For $r=s\in(0,1)$ (i.e. $\ir=0$), we define
\be\label{RLFOD}
TV_{\ell^p}^s(u):=\sup\flp{\int_Qu\, \divg^s \vp \,dx:\,\,\vp\in C_c^\infty(Q;\rn)\text{ and }\abs{\vp}_{\ell^p}^\ast\leq 1},
\ee  
where we set
\be\label{scaled_divg}
\divg^s u:=[(1-1/N)s+1/N]\sum_{i=1}^N\partial^s_{i,R}\vp_i; 
\ee
\item
For $r=\ir+s$ where $\ir\geq1$, we define 
\be\label{RLFODr}
TV_{\ell^p}^r(u):=\sup\flp{\int_Qu\, \divg^s[\divg^\ir \vp] \,dx:\,\,\vp\in C_c^\infty(Q;\M^{N\times (N^{k})})\text{ and }\abs{\vp}_{\ell^p}^\ast\leq 1}.
\ee
\end{enumerate}
\end{define}
\begin{remark}
In \eqref{scaled_divg} we applied the Right-sided derivative on the test function $\vp$. We do this so that when $u$ is sufficient regular, for example $u\in C^\infty(\bar Q)$, the integration by parts formula holds so that 
\be
\int_Qu\, \divg^s \vp \,dx =- \int_Q\nabla^s_Lu\,\vp\,dx.
\ee 
That is, we have Left-sided operator on function $u$, as we mentioned in Remark \ref{left_use_whole} that we primally work on Left-sided operator in this article. Moreover, if we choose to work primally on Right-sided derivative, we shall use Left-sided derivative on test function $\vp$ in \eqref{scaled_divg}.
\end{remark}
%
%
\begin{define}\label{strict_conv_BVr}
Let $r\in\R^+$ and $p\in[1,+\infty]$ be given. 
\begin{enumerate}[1.]
\item
Given a sequence $\seqn{u_n}\subseteq L^1(Q)$ such that $TV^r(u_n)<+\infty$ for each $n\in\N$, we say it is \emph{strictly} converging to $u\in L^1(Q)$ 
with respect to the $TV^r_{\ell^p}$ seminorm, and write $u_n\wtogsp u$, if
\be\label{def_eq_strict_covg}
\limn\norm{u-u_n}_{L^1(Q)}+\abs{TV^r_{\ell^p}(u_n)-TV_{\ell^p}^r(u)}= 0.
\ee
That is, $u_n\wtogsp u$ if $u_n\to u$ strongly in $L^1(Q)$ and $TV^r_{\ell^p}(u_n)\to TV_{\ell^p}^r(u)$.

\item
We define the space $SV^r(Q)$ by
\be SV^r(Q):=\bigcap_{p\in[1,+\infty]} \overline{C^\infty(Q)}^{\text{ ss-}(p)} .
\label{BV_r_p_diff_ss} \ee
That is, $SV^r(Q)$ is the intersection (among all $p\in[1,+\infty] $) of the closures of $C^\infty(Q)$ with respect to the ss-$(p)$ convergence.
\item
We define the (standard) $BV^r(Q)$ space by 
\be\label{BV_r_p_diff}
BV^r(Q):=\bigcap_{p\in[1,+\infty]}\flp{u\in L^1(Q):\,\, TV^r_{\ell^p}(u)<+\infty}.
\ee
\end{enumerate}
\end{define}
\begin{remark}[Equivalence between $TV_{\ell^p}^r$]\label{an_iso_equ}
Definition \ref{strict_conv_BVr} has several consequences.
\begin{enumerate}[1.]
\item
By \eqref{x_eu_equivalence} we have that, for any $1\leq q<p\leq+\infty$,
\be\label{equ_p_1_p}
N^{1/p-1/q}TV_{\ell^p}^r(u)\leq TV_{\ell^q}^r(u)\leq TV_{\ell^p}^r(u).
\ee
That is, the set $\flp{u\in L^1(Q):\,\,TV^r_{\ell^p}(u)<+\infty}$ is in reality independent of $p\in[1,+\infty]$. Thus, we could simplify the real $r$-order bounded variation space $BV^r(Q)$ defined in \eqref{BV_r_p_diff} to
\be
BV^r(Q):=\flp{u\in L^1(Q):\,\,TV^r_{\ell^2}(u)<+\infty}.
\ee
without dependence on the underlying $\ell^p$-norm.
\item 
The space $SV^r(Q)$, defined in \eqref{BV_r_p_diff_ss}, enjoys the ``smooth approximation" property: for each $u\in SV^r(Q)$ and $p\in[1,+\infty]$, there exists 
a sequence $\seqn{u_n}\subseteq C^\infty(Q)\cap BV^r(Q)$ such that \eqref{def_eq_strict_covg} holds. 
In the case of integer order, i.e. $r=k\in\N$, we do have $SV^k(Q)=BV^k(Q)$ (see for instance \cite{evans2015measure}). 
However, due to the singularities at the boundary arising from the definition of fractional derivatives, we are unable to prove a smooth approximation result.
In particular, the construction from \cite{evans2015measure} would not work, unless additional conditions are assumed. In Section \ref{sec_functional_lsc} 
and Theorem \ref{smooth_approx_bdy_condition}, we will discuss certain special conditions, compatible with the imaging processing problems, where such smooth approximation 
results could be proven. 
\end{enumerate}
\end{remark}

\BLK
We conclude this subsection with some definitions and properties of fractional order Sobolev seminorms.
\begin{define}
For $r\in\R^+$, we define the $r$-order fractional Sobolev space by 
\be
W^{r,1}(Q):=\flp{u\in L^1(Q;\mathbb M^{N\times N^{\ir}}):\,\, \exists g\in L^1(Q)\text{ such that }\int_Q u\, \divg_R^r \vp \,dx=\int_Q g\,\vp\,dx},
\ee
where $g\in L^1(Q;\mathbb M^{N\times N^{\ir}})$ is the weak $r$-order fractional derivative of $u$. Also, we equip it with norm
\be
\norm{u}_{W^{r,1}_{\ell^p}(Q)}:=\norm{u}_{L^1(Q)}+\int_Q\abs{g}_{\ell^p}dx.
\ee
\end{define}

\begin{define}
Let $r=\ir+s$. By $AC^{r,1}(I)$ we denote the set of all functions $w:I\to\R$ admitting a representation
of the form
\be\label{repre_AC}
w(t)=\sum_{i=0}^\ir \frac{c_i}{\Gamma(s+i)}t^{s-1+i}+\mathbb I^r \phi(t),\,\, t\in I\,\,a.e.,
\ee
where $c_0,\ldots,c_k\in\R$ and $\phi\in L^1(I)$.
\end{define}
We recall the following results form \cite{MR3144452}.
\begin{theorem}\label{thm_MR3144452}
Let $r=\ir+s$ be given.
\begin{enumerate}[1.]
\item
\cite[Theorem 7]{MR3144452} the function $w\in L^1(I)$ admits the $r$-order derivative if and only if $w\in AC^{r,1}(I)$. In this case, $w$ has the representation \eqref{repre_AC} and we have
\begin{align}\label{d_representable}
d^{i+s} w(0)=c_i,\,\, i=0,\ldots, k-2,\text{ and }d^r w(t)=\phi(t),\,\, t\in I\,\,a.e..
\end{align}
\item
\cite[Theorem 19]{MR3144452} we have
\be\label{ac_sobolev_s}
W^{r,1}(I)=AC^{r,1}(I)\cap L^1(I).
\ee
\end{enumerate}
\end{theorem}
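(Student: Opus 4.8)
The plan is to reduce both assertions to integer-order calculus via the elementary identity
\be
d^{r}w=\left(\frac{d}{dx}\right)^{\ir+1}\!\left(\mathbb I^{1-s}w\right),
\ee
immediate from Definition \ref{frac_der_def} since $\ir+1-r=1-s$. First I would note that, because $\mathbb I^{1-s}$ maps $L^1(I)$ into $L^1(I)$ (Theorem \ref{thm_MR1347689}, Assertion \ref{semigroup_frac_int}), this identity says precisely that $w\in L^1(I)$ admits the $r$-order derivative if and only if $\psi:=\mathbb I^{1-s}w$ belongs to $W^{\ir+1,1}(I)$, in which case $\psi^{(\ir+1)}=d^{r}w=:\phi$. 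For such $w$, the classical Taylor formula in $W^{\ir+1,1}(I)$ gives $\psi=\mathbb I^{\ir+1}\phi+\sum_{i=0}^{\ir}\frac{\psi^{(i)}(0)}{i!}x^{i}$, where $\mathbb I^{\ir+1}$ denotes the $(\ir+1)$-fold iterated integral from $0$. Since $w\in L^1(I)$ one has $d^{1-s}\mathbb I^{1-s}w=\frac{d}{dx}\mathbb I^{s}\mathbb I^{1-s}w=\frac{d}{dx}\mathbb I^{1}w=w$ a.e.\ by the semigroup property (Assertion \ref{MR1347689T2_5}); applying $d^{1-s}$ to the Taylor formula, and using $d^{1-s}\mathbb I^{\ir+1}\phi=\mathbb I^{\ir+s}\phi=\mathbb I^{r}\phi$ (Assertion \ref{MR1347689T2_5}) together with $d^{1-s}x^{i}=\frac{i!}{\Gamma(s+i)}x^{s-1+i}$ (Lemma \ref{power_function_s}, Assertion 1, with derivative order $1-s$), yields exactly the representation \eqref{repre_AC} with $c_{i}=\psi^{(i)}(0)$; hence $w\in AC^{r,1}(I)$.

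Conversely, starting from \eqref{repre_AC} I would compute $\mathbb I^{1-s}w=\mathbb I^{\ir+1}\phi+\sum_{i}\frac{c_{i}}{i!}x^{i}\in W^{\ir+1,1}(I)$, using $\mathbb I^{1-s}\mathbb I^{r}\phi=\mathbb I^{\ir+1}\phi$ and the Euler beta identity $\mathbb I^{1-s}x^{s-1+i}=\frac{\Gamma(s+i)}{i!}x^{i}$; the displayed identity then shows that $w$ admits the $r$-order derivative with $d^{r}w=\phi$ a.e., and differentiating $\psi=\mathbb I^{1-s}w$ the appropriate number of times and evaluating at $0$ (where the terms $\mathbb I^{j}\phi$, $j\ge1$, vanish) identifies the constants $c_{i}$ with the fractional-order traces of $w$ at the origin, as in \eqref{d_representable}. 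I would also record that $AC^{r,1}(I)\subseteq L^1(I)$, since $x^{s-1+i}\in L^1(I)$ for $s>0$ and $\mathbb I^{r}\phi\in L^1(I)$ by \eqref{eq_semigroup_frac_int}; in particular $AC^{r,1}(I)\cap L^1(I)=AC^{r,1}(I)$.

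For Assertion 2 the task is to show that the pointwise and weak $r$-order fractional derivatives of an $L^1$ function coincide. If $w\in AC^{r,1}(I)$, I would write $w=\mathbb I^{r}(d^{r}w)+\sum_{i=0}^{\ir}a_{i}x^{s-1+i}$, test against $\varphi\in C_c^\infty(I)$, use \eqref{caputo_eq_RL} to replace $d^{r}_{R}\varphi$ by the Caputo derivative $(-1)^{\ir+1}\mathbb I^{1-s}_{R}\varphi^{(\ir+1)}$, transfer the right-sided integral onto $w$ by the Fubini adjointness $\int_I(\mathbb I^{\mu}_{L}f)\,g\,dx=\int_I f\,(\mathbb I^{\mu}_{R}g)\,dx$ and the semigroup property, and finally integrate by parts $\ir+1$ times; this shows the $\mathbb I^{r}$-term contributes $\int_I(d^{r}w)\varphi\,dx$ and each power term contributes $0$ (because $\mathbb I^{1-s}_{L}x^{s-1+i}$ is a polynomial of degree $\le\ir$), so $w\in W^{r,1}(I)$. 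For the reverse inclusion, given $u\in W^{r,1}(I)$ with weak derivative $g$, I would put $v:=u-\mathbb I^{r}g$, so $v$ has vanishing weak $r$-order derivative; the same two ingredients give $\int_I(\mathbb I^{1-s}_{L}v)\,\psi^{(\ir+1)}\,dx=\pm\int_I v\,d^{r}_{R}\psi\,dx=0$ for all $\psi\in C_c^\infty(I)$, hence $\mathbb I^{1-s}_{L}v$ has vanishing $(\ir+1)$-st distributional derivative, so is a.e.\ a polynomial of degree $\le\ir$; applying $d^{1-s}_{L}$ (which inverts $\mathbb I^{1-s}_{L}$ on $L^1(I)$) and Lemma \ref{power_function_s} gives $v\in\operatorname{span}\{x^{s-1+i}:0\le i\le\ir\}$, so $u$ has the form \eqref{repre_AC}, i.e.\ $u\in AC^{r,1}(I)$. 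Combined with Assertion 1, this yields \eqref{ac_sobolev_s}.

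The hard part is not the overall scheme but the rigorous handling of the fractional integration-by-parts and adjointness identities at the singular endpoint: the generating functions $x^{s-1+i}$ are not in $C^\infty(\bar I)$ (indeed $x^{s-1}$ is unbounded near $0$), so the integration-by-parts formulas recalled in the Remark after Definition \ref{frac_der_def}, which assume $w\in C^\infty(\bar I)$, must be re-established for $w\in AC^{r,1}(I)$ with careful control of the integrable-but-unbounded endpoint contributions. Tracking which of the three cases of the semigroup property (Assertion \ref{MR1347689T2_5}) applies at each step is a further, mostly mechanical, point of care.
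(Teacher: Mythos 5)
The paper does not actually prove this statement: Theorem \ref{thm_MR3144452} is imported verbatim from \cite{MR3144452} (Theorems 7 and 19 there), so there is no in-paper argument to compare yours against. Your reconstruction follows what is essentially the standard route of that reference (and of Samko--Kilbas--Marichev): reduce everything to the identity $d^{r}w=(d/dx)^{\ir+1}\mathbb I^{1-s}w$, apply the integer-order Taylor formula to $\psi=\mathbb I^{1-s}w$, and pass back and forth with $d^{1-s}$ and $\mathbb I^{1-s}$ via the semigroup property and the power-function formulas; for Assertion 2, subtracting $\mathbb I^{r}g$ and showing the remainder has vanishing distributional $(\ir+1)$-st derivative after applying $\mathbb I^{1-s}_{L}$, hence lies in $\operatorname{span}\{x^{s-1+i}\}$, is likewise the standard argument. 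The scheme is sound, and you correctly isolate the only genuinely delicate points (Fubini/adjointness at the singular endpoint and which case of the semigroup theorem applies at each step).

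Two small caveats. First, your computation identifies $c_{i}=(\mathbb I^{1-s}w)^{(i)}(0)$, which gives $d^{(i-1)+s}w(0)=c_{i}$ for $i\ge 1$ (and $c_{0}=(\mathbb I^{1-s}w)(0)$); this is consistent with how the paper later uses the constants in the proof of Lemma \ref{1d_approx_up}, but it does not literally match the printed formula \eqref{d_representable}, which reads $d^{i+s}w(0)=c_{i}$ and appears to carry an index shift (note also the undefined $k$ in its stated range). You should state your normalization explicitly rather than asserting agreement with \eqref{d_representable} as written. Second, the opening equivalence ``$w$ admits the $r$-order derivative iff $\mathbb I^{1-s}w\in W^{\ir+1,1}(I)$'' is really the definition of admissibility in the cited source (absolute continuity of $(\mathbb I^{1-s}w)^{(\ir)}$), not a consequence of the displayed identity alone: mere a.e.\ existence of the $(\ir+1)$-st derivative of $\mathbb I^{1-s}w$ would be strictly weaker. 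Neither point invalidates the overall argument.
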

\textbf{Remark.} From \eqref{d_representable} and \eqref{ac_sobolev_s}, we see that for $w\in W^{r,1}(I)$, the corresponding $\phi\in L^1(I)$ satisfies $\norm{\mathbb I^r\phi}_{L^1(I)}<+\infty$, 
and hence $\phi\in \mathbb I^r(L^1(I))$ in view of Definition \ref{frac_represent_I}.
\begin{remark}\label{int_iso_equ}
Let $p\in[1,+\infty]$ be given, and assume that $u\in C^\infty(Q)\cap BV^{s}(Q)$. Then we have
\be\label{TV_sobolevl_equiv}
TV^s_{\ell^p}(u) =\int_Q \abs{\nabla^s u}_{\ell^p}dx=\int_Q \abs{(\partial_1^s u,\partial_2^su)}_{\ell^p}dx.
\ee
The proof follows directly from \cite[Proposition 3.5]{zhang2015total}.\\\\

Equation \eqref{TV_sobolevl_equiv} allows us to use the fact that the an-isotropic total variation $TV_{\ell^1}$ can be computed axis by axis. For example, let $N=2$, by \eqref{TV_sobolevl_equiv} we have
\be
 TV^s_{\ell^1}(u) =\int_Q \abs{(\partial_1^s u,\partial_2^su)}_{\ell^1}dx= \int_Q \abs{\partial^s_1 u}dx+\int_Q \abs{\partial^s_2 u}dx.
\ee
That is, we are able to separate the integration in the two variables. This will be crucial in allowing us to study
properties of real order total variation in the multi-dimensional setting, by using results from the (often easier) one dimensional case. 

\end{remark}

\subsection{Basic properties of $TV^r$ with fixed order of derivative}\label{sec_smooth_approx}
For brevity, we will only write $TV^r$ without explicit reference to the underlying Euclidean $\ell^p$-norm. 
However, in several arguments it will be advantageous to use the $TV_{\ell^1}^r$ seminorm (see Remark \ref{int_iso_equ}). 
 In such instances, we will thus write $TV_{\ell^1}^r$ to clarify that we are relying on the underlying norm being the Euclidean $\ell^1$-norm.
We first prove a lower semi-continuity result with fixed order $r\in\R^+$.

\begin{theorem}  \label{weak_star_comp_s} 
Given $u\in \mb(Q)$, and sequence $\seqn{u_n}\subseteq BV^r(Q)$ satisfying one of the following conditions:
\begin{enumerate}[1.]
\item
$u\in L^1(Q)$ and $\seqn{u_n}$ is locally uniformly integrable and $u_n\to u$ a.e.,
\item 
$u_n\wtos u$ in $\mb(Q)$
\item 
 $u_n\wto u$ in $L^p(Q)$, for some $p>1$,
\end{enumerate}
then we have
\be\label{liminf_s_weak}
\liminf_{n\to\infty} TV^r(u_n)\geq TV^r(u).
\ee
\end{theorem}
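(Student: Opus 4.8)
The plan is to exploit the fact that $TV^r$ is defined as a supremum of linear functionals $u\mapsto \int_Q u\,\divg^s[\divg^{\ir}\varphi]\,dx$ over a fixed family of smooth, compactly supported test functions $\varphi$ with $|\varphi|_{\ell^p}^\ast\le 1$. For a fixed such $\varphi$, the integrand $\psi_\varphi:=\divg^s[\divg^{\ir}\varphi]$ is a fixed function in $C_c^\infty(Q)$ (this uses that $\divg^{\ir}\varphi$ is smooth and compactly supported, and then the Right-sided fractional derivative $\divg^s$ of a $C_c^\infty$ function is again smooth and compactly supported on $Q$, so in particular bounded). Hence $\int_Q u_n\,\psi_\varphi\,dx \to \int_Q u\,\psi_\varphi\,dx$ under each of the three hypotheses: in case 3 this is precisely weak convergence in $L^p$ tested against $\psi_\varphi\in L^{p'}(Q)$; in case 2 it is weak-$\ast$ convergence of measures tested against the continuous function $\psi_\varphi$; and in case 1 it follows from $u_n\to u$ a.e.\ together with local uniform integrability via Vitali's convergence theorem (since $\psi_\varphi$ is bounded with compact support inside $Q$, the product $u_n\psi_\varphi$ is uniformly integrable on that support and converges a.e.). So for every admissible $\varphi$,
\be
\int_Q u\,\divg^s[\divg^{\ir}\varphi]\,dx=\limn \int_Q u_n\,\divg^s[\divg^{\ir}\varphi]\,dx\le \liminf_{n\to\infty} TV^r(u_n).
\ee

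Taking the supremum over all $\varphi\in C_c^\infty(Q;\M^{N\times N^{k}})$ with $|\varphi|_{\ell^p}^\ast\le 1$ on the left-hand side yields exactly $TV^r(u)\le \liminf_{n\to\infty}TV^r(u_n)$, which is \eqref{liminf_s_weak}. The case $r=s\in(0,1)$ (i.e. $\ir=0$) is handled identically, using the test functions from \eqref{RLFOD}–\eqref{scaled_divg} instead.

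The only genuine point requiring care — and the step I expect to be the main obstacle — is justifying that $\psi_\varphi=\divg^s[\divg^{\ir}\varphi]$ is a bounded function with support compactly contained in $Q$, so that the three limiting arguments actually apply. The Right-sided Riemann–Liouville derivative involves differentiating a fractional integral, and one must check there is no blow-up near $\partial Q$; this is where the choice of the Right-sided operator on test functions (Remark after Definition \ref{isotropic_frac_variation}) and the compact support of $\varphi$ are essential, since they push the singular endpoint of the fractional integral away from the support of $\varphi$. A clean way to argue this is to note $d^s_R$ maps $C_c^\infty(I)$ into $C^\infty(\bar I)$ (by \eqref{caputo_eq_RL}, $d^s_R\phi=d^s_{R,c}\phi$ is a fractional integral of the smooth compactly supported function $\phi'$, which is continuous up to the boundary), so in particular $\psi_\varphi\in L^\infty(Q)\cap L^{p'}(Q)$ for every $p'$. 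Once this regularity is in hand, the rest is the routine "supremum of limits" argument above, and in each of cases 1–3 one simply invokes the appropriate convergence mode against the fixed bounded test integrand.
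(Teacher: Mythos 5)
Your proposal follows essentially the same route as the paper's own proof: fix an admissible test function $\vp$, pass to the limit in $\int_Q u_n\,\divg^s[\divg^\ir\vp]\,dx$ under each of the three hypotheses (weak $L^p$ convergence against $L^{p'}$, weak-$\ast$ convergence against a continuous integrand, and Vitali/dominated convergence in case 1), and then take the supremum over $\vp$. One small inaccuracy: $\divg^s[\divg^\ir\vp]$ is generally \emph{not} compactly supported in $Q$ (the right-sided fractional integral of a compactly supported function has a tail reaching the boundary), only bounded and continuous on $\bar Q$ --- which is exactly what your own later remark establishes and is all the limiting arguments actually need, so the argument stands as in the paper.
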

Note that (3) is stronger than (2), but we stated it explicitly since it constitutes a special case widely used in this paper.
To prove Theorem \ref{weak_star_comp_s}, a preliminary result is required. 

\begin{lemma}\label{linftyds}
Let $\vp\in C_c^\infty(Q)$ be given. Then the following statements hold.
\begin{enumerate}[1.]
\item 
For any fixed $T\in \N$, we have
\be
\sup\flp{\norm{d^r\vp}_{L^\infty(Q)}:\,\,{r\in(0,T)} }<+\infty.
\ee
\item
For a.e. $x\in Q$, we have
\be
\divg^r\vp(x)\to \divg^\ir\vp(x)\text{ as }r\to\ir^+
\ee
and 
\be
\divg^r\vp(x)\to \divg^{\lceil r \rceil}\vp(x)\text{ as }r\to \lceil r \rceil^-.
\ee
\end{enumerate}
\end{lemma}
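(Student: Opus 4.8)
The plan is to reduce everything to the one-dimensional Riemann–Liouville derivative acting on a fixed test function $\vp \in C_c^\infty(Q)$, where the relevant computations become explicit. For Assertion 1, I would first recall that $d^r$ in dimension one is, by Definition \ref{frac_der_def}, $d^r w(x) = \frac{1}{\Gamma(1-s)}(\tfrac{d}{dx})^{\ir+1}\int_0^x w(t)(x-t)^{-s}\,dt$ with $r = \ir + s$. Since $\vp \in C_c^\infty$ is smooth with compact support, I would integrate by parts (or differentiate under the integral sign using the Caputo form, which agrees with the RL form on compactly supported functions by \eqref{caputo_eq_RL}) to rewrite $d^r\vp(x) = \frac{1}{\Gamma(1-s)}\int_0^x \vp^{(\ir+1)}(t)(x-t)^{-s}\,dt$, i.e. as the fractional integral $\mathbb I^{1-s}$ applied to the integer derivative $\vp^{(\ir+1)}$. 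Then \eqref{eq_semigroup_frac_int} (or a direct estimate) gives $\abs{d^r\vp(x)} \le \frac{1}{\Gamma(1-s)}\,\norm{\vp^{(\ir+1)}}_{L^\infty}\int_0^x (x-t)^{-s}\,dt = \frac{x^{1-s}}{(1-s)\Gamma(1-s)}\norm{\vp^{(\ir+1)}}_{L^\infty} \le \frac{1}{(1-s)\Gamma(1-s)}\max_{0\le k\le T}\norm{\vp^{(k)}}_{L^\infty}$. The point is that $\Gamma(1-s)(1-s) = \Gamma(2-s)$ stays bounded away from $0$ for $s \in [0,1)$, and $\ir+1 \le T$, so the supremum over $r \in (0,T)$ is finite. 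In the multidimensional case $\divg^r\vp$ is a finite sum of one-dimensional $\partial_i^s$ applied to $\divg^{\ir}\vp$ (itself a finite sum of integer partial derivatives of $\vp$), with the scalar prefactor $(1-1/N)s + 1/N$ bounded, so the same bound holds coordinatewise.

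For Assertion 2, I would again work coordinatewise and exploit the representation $\partial_i^r\vp = \mathbb I^{1-s}[\partial_i^{\ir+1}\vp]$ in the $i$-th variable. As $r \to \ir^+$ we have $s \to 0^+$, and by the strong continuity of the fractional integration semigroup $\{\mathbb I^\sigma\}_{\sigma \ge 0}$ in $L^p$ (Theorem \ref{thm_MR1347689}, Assertion \ref{semigroup_frac_int}), together with $\mathbb I^0 = \mathrm{Id}$, one gets $\mathbb I^{1-s}[\partial_i^{\ir+1}\vp] \to \mathbb I^1[\partial_i^{\ir+1}\vp] = \int_0^{x_i}\partial_i^{\ir+1}\vp = \partial_i^{\ir}\vp$ (using $\vp$ compactly supported so the antiderivative recovers $\partial_i^{\ir}\vp$) in $L^1$, hence along a subsequence a.e.; combined with the uniform $L^\infty$ bound from Assertion 1 and dominated convergence one upgrades this to a.e. convergence of the full family. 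Summing over $i$ and inserting the convergent prefactor $(1-1/N)s+1/N \to 1$ as $s \to 0$ gives $\divg^r\vp(x) \to \divg^{\ir}\vp(x)$ a.e. The limit $r \to \lceil r\rceil^-$ is handled the same way: now $s \to 1^-$, so $1-s \to 0^+$, and $\mathbb I^{1-s}[\partial_i^{\ir+1}\vp] \to \mathbb I^0[\partial_i^{\ir+1}\vp] = \partial_i^{\ir+1}\vp = \partial_i^{\lceil r\rceil}\vp$ by the same strong-continuity argument; the prefactor tends to $1$ as well.

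The main obstacle I anticipate is the behavior of the singular kernel near the endpoint, i.e. making the passage from the (possibly ill-behaved at the boundary) Riemann–Liouville definition to the clean fractional-integral-of-a-smooth-function representation rigorous, and controlling the $\Gamma$-factors uniformly. Concretely, one must be careful that $\vp$ being compactly supported in $Q$ is exactly what kills the boundary terms in the integration by parts (so that $d^r\vp$ really equals $\mathbb I^{\ir+1-s}$ of an integer derivative rather than picking up singular power-function contributions like those in \eqref{repre_AC}), and that the semigroup continuity in Theorem \ref{thm_MR1347689} is being applied to a family of exponents $1-s$ ranging over $(0,1]$, where strong continuity — not uniform continuity — at the endpoint $0$ is what is needed. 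Once these are pinned down, the two assertions follow by routine estimates and dominated convergence.
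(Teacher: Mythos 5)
Your treatment of Assertion 1 coincides with the paper's: both pass to the Caputo form via \eqref{caputo_eq_RL}, rewrite $d^r\vp$ as $\frac{1}{\Gamma(1-s)}\int_0^x \vp^{(\ir+1)}(t)(x-t)^{-s}\,dt$, and observe that $(1-s)\Gamma(1-s)=\Gamma(2-s)$ stays bounded away from zero for $s\in[0,1)$, after which the multidimensional claim follows coordinatewise. For Assertion 2 you take a genuinely different route: the paper argues through the Laplace transform of the Caputo derivative, $\mathcal L\flp{d^s_c\vp}(y)=y^s\mathcal L\flp{\vp}(y)$, and passes to the limit on the transform side, whereas you invoke the strong continuity of the semigroup $\mathbb I^\sigma$, $\sigma\ge 0$ (Theorem \ref{thm_MR1347689}, Assertion \ref{semigroup_frac_int}), applied to the fixed smooth function $\vp^{(\ir+1)}$. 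Your route has the advantage of resting on a continuity statement actually recorded in the paper, while the paper's inference from convergence of Laplace transforms to pointwise convergence of the functions is left implicit; the trade-off is that the semigroup gives you only $L^1$ convergence, and your proposed upgrade --- a.e.\ convergence along a subsequence plus the uniform $L^\infty$ bound plus dominated convergence --- does not yield a.e.\ convergence of the full family (it gives at best convergence in measure). This is easily repaired by proving the pointwise limit directly: for fixed $x$, after the substitution $u=x-t$ one has $\mathbb I^{1-s}[\vp^{(\ir+1)}](x)=\frac{1}{\Gamma(1-s)}\int_0^x\vp^{(\ir+1)}(x-u)\,u^{-s}\,du$; as $s\searrow 0$ the kernel $u^{-s}/\Gamma(1-s)$ tends to $1$ with an integrable dominant, giving $\vp^{(\ir)}(x)$ by compact support, and as $s\nearrow 1$ it is an approximate identity at $u=0$ (its mass on $(0,\epsilon)$ is $\epsilon^{1-s}/\Gamma(2-s)\to 1$ while its mass on $(\epsilon,x)$ vanishes), giving $\vp^{(\ir+1)}(x)$ by continuity. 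One further small point: with the normalization \eqref{scaled_divg} the prefactor $(1-1/N)s+1/N$ tends to $1/N$, not $1$, as $s\searrow 0$ (it tends to $1$ only as $s\nearrow 1$); this is a quirk of the paper's own definition, visible also in the proof of Proposition \ref{bergounioux2017fractional}, but your parenthetical claim about that limit is arithmetically off and should be corrected or the constant carried along.
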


\begin{proof}
We first consider the one dimensional case, i.e. $Q=I=(0,1)$. 
In view of \eqref{caputo_eq_RL}, for any $r=\ir+s$, we have
\begin{align*}
\abs{d^r_Lw(x)}& = \abs{d^r_{L,c}(x)}\leq\frac{1}{\Gamma(1-s)}\int_0^x \frac{\abs{(d^{\ir+1})w(t)}}{(x-t)^{s}}dt\\
&\leq \norm{w}_{W^{\ir+1,+\infty}(I) }\cdot\frac{1}{\Gamma(1-s)}\int_0^x \frac{1}{(x-t)^{s}}dt\\
&\leq \norm{w}_{W^{\ir+1,+\infty}(I)}\cdot\frac{1}{(s-1)\Gamma(1-s)}\fmp{(x-t)^{1-s}\bigg|^x_0}\\
&\leq \norm{w}_{W^{\ir+1,+\infty}(I)}\cdot\frac{1}{(1-s)\Gamma(1-s)},
\end{align*}
which implies 
\be
\abs{d^r_L w(x)} \leq\norm{d^{\ir+1}w}_{L^{\infty}(I)}\cdot\frac{1}{(1-s)\Gamma(1-s)}.
\ee
Note also that, by Euler's reflection formula,
\be
\sup_{s\in(0, 1)}\frac{1}{\Gamma(1-s)(1-s)}=\sup_{s\in(0,1)}\frac{1}{\Gamma(2-s)}\leq1,
\ee
hence, 
\be
\norm{d_L^rw}_{L^\infty(I)}\leq \norm{d^{\ir+1}w}_{L^{\infty}(I)} <+\infty,
\ee
and
\be
\sup\flp{\norm{d^r_L\vp}_{L^\infty(I)}:\,\,{r\in(0,T)} }\leq \sum_{l=0}^{T}\norm{d^{l+1}\vp}_{L^{\infty}(I)}<+\infty.
\ee
The same arguments give the desired results for the right and central sided $RL$ derivatives.\\\\
We next prove Statement 2, for the case $0<r<1$. 
The case $r\geq 1$ is proven using similar arguments. In view of \eqref{caputo_eq_RL}, we have $d^s\vp=d^s_c \vp$, with
$d^s_c $ denoting the \emph{Caputo} fractional derivative. Recall the Laplace transform gives
\be
\mathcal L\flp{d^s_c \vp}(y)=y^s\mathcal L\flp{\vp}(y)-y^{s-1}\vp(0)=y^s\mathcal L\flp{\vp}(y).
\ee
Therefore, we have 
\be
\lim_{s\nearrow 1}\mathcal L\flp{d^s_c \vp}(y) = y\,\mathcal L\flp{\vp}(y) 
\qquad\text{ and }\qquad\lim_{s\searrow 0}\mathcal L\flp{d^s_c \vp}(y) = \mathcal L\flp{\vp}(y),
\ee
and hence we conclude that 
\be
\lim_{s\nearrow 1}d^s_c(\vp)(x)= d\vp(x)\qquad\text{ and }\qquad\lim_{s\searrow 0}d^s_c(\vp)(x)= \vp(x),
\ee
as desired. \\\\
The multidimensional case can be directly inferred from one dimensional case.
We discuss the two dimensional case (i.e. $N=2$) as example. Recall that we defined 
\[\divg^s \vp(x_1,x_2)=\partial^s_1\vp(x_1,x_2)+\partial_2^s\vp(x_1,x_2),\] 
thus we have $\partial^s_1\vp(x_1,x_2) = d^s\vp(x_1,x_2)\lfloor_{x_2}$ for any fixed $x_2\in (0,1)$.
\end{proof}

\begin{proof}[Proof of Theorem \ref{weak_star_comp_s}]
Let $r=\ir+s$ be given, and fix an arbitrary 
$\vp\in C_c^\infty(Q;\M^{N\times (N^{k})})$. In view of \eqref{RLFOD} (or \eqref{RLFODr}) we have
\be
TV^r(u_n)\geq \int_Qu_n\, [\divg^s\divg^\ir \vp] \,dx.
\ee
If we are assume condition (1) holds. Since $\seqn{u_n}$ is locally uniformly integrable and $u_n\to u$ a.e., 
by the dominated convergence theorem 
\be
\limn\, \int_Qu_n\, [\divg^s\divg^\ir \vp] \,dx =\int_Qu\, [\divg^s\divg^\ir \vp] \,dx.
\ee
If we are assume either condition (2), or the even stronger (3), note that
 since $\vp\in C_c^\infty(Q;\mathbb M^{N\times (N)^\ir})$, we have $\divg^s\divg^\ir\vp\in C(\bar Q)$. Hence, since $\seqn{u_n}\subset L^1(Q)$ and in view of the $\text{weak}^\ast$ convergence in $\mb(Q)$ (see \cite[Page 116]{brezis2010functional}), we again have
\be
\limn\, \int_Qu_n\, [\divg^s\divg^\ir \vp] \,dx =\int_Qu\, [\divg^s\divg^\ir \vp] \,dx.
\ee
%
Thus, in all cases, we have
\be
\liminfn\, TV_{\ell^1}^r(u_n)\geq \liminfn\, \int_Qu_n\, [\divg^s\divg^\ir \vp] \,dx =\int_Qu\, [\divg^s\divg^\ir \vp] \,dx.
\ee
Taking the supremum over all $\vp\in C_c^\infty(Q;\M^{N\times(N^\ir)})$ with $\abs{\vp}\leq 1$, we conclude \eqref{liminf_s_weak}, as desired. 
\end{proof}

We recall the \emph{Riesz} representation theorem.
\begin{theorem}[{\cite[Theorem 1, Section 1.8]{evans2015measure}}]\label{riesz_theorem}
Let $L:$ $C_c(\rn,\R^M)\to\R$ be a linear functional satisfying
\be
\sup\flp{L(\vp):\,\,\vp\in C_c(\rn;\R^M),\,\,\abs{\vp}\leq 1,\,\,\operatorname{spt}(\vp)\subset K}<+\infty
\ee
for each compact set $K\subset \rn$. Then there exists a Radon measure $\mu$ on $\rn$, and a $\mu$-measurable function $\sigma$: $\rn\to\R^M$ such that
\begin{enumerate}[1.]
\item
$\abs{\sigma(x)}=1$ for $\mu$-a.e. $x$, and
\item
$L(\vp)=\int_\rn \vp\cdot\sigma d\mu$.
\end{enumerate}
\end{theorem}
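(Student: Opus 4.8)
Since the final statement is the classical Riesz representation theorem for vector-valued Radon measures, quoted verbatim from \cite[Theorem 1, Section 1.8]{evans2015measure}, it is simply invoked in the paper; the plan below indicates how one \emph{would} prove it, following the Carath\'eodory-type construction.

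\textbf{Step 1: construct the measure from $L$.} For every open $V\subseteq\rn$ set
\[
\mu(V):=\sup\flp{L(\vp):\,\,\vp\in C_c(\rn;\R^M),\,\,\abs{\vp}\leq 1,\,\,\operatorname{spt}(\vp)\subset V},
\]
and for arbitrary $A\subseteq\rn$ set $\mu(A):=\inf\flp{\mu(V):\,\,A\subseteq V,\,\,V\text{ open}}$. The first task is to check that $\mu$ is a Borel regular outer measure which is finite on compact sets (the latter being exactly the standing hypothesis on $L$). Monotonicity is immediate; countable subadditivity on open sets is obtained through a subordinate partition of unity, splitting a test field supported in $\bigcup_j V_j$ as $\vp=\sum_j\vp\,\psi_j$ and using linearity of $L$; and the Carath\'eodory measurability of open sets follows from the standard ``almost disjoint supports'' argument. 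This yields the Radon measure $\mu$.

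\textbf{Step 2: extract the density $\sigma$.} The next task is the key bound
\[
\abs{L(\vp)}\leq\int_\rn\abs{\vp}\,d\mu\qquad\text{for every }\vp\in C_c(\rn;\R^M),
\]
which follows from the elementary local estimate $\abs{L(g)}\leq\bigl(\sup_V\abs{g}\bigr)\,\mu(V)$ (valid whenever $\operatorname{spt}(g)\subset V$ with $V$ open) by covering $\operatorname{spt}(\vp)$ by finitely many open sets on each of which $\abs{\vp}$ is nearly constant, summing the local estimates via a partition of unity, and letting the mesh tend to $0$. Granting this, $L$ is a linear functional of norm $\leq 1$ on $C_c(\rn;\R^M)$, which is dense in $L^1(\mu;\R^M)$, so $L$ extends to all of $L^1(\mu;\R^M)$; by the duality $(L^1(\mu;\R^M))^{\ast}=L^\infty(\mu;\R^M)$ there exists $\sigma\in L^\infty(\mu;\R^M)$ with $L(\vp)=\int_\rn\vp\cdot\sigma\,d\mu$ and $\abs{\sigma}\leq 1$ $\mu$-a.e.

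\textbf{Step 3: upgrade to $\abs{\sigma}=1$ $\mu$-a.e.} Using $L(\vp)=\int_\rn\vp\cdot\sigma\,d\mu$ one gets $\mu(V)\leq\int_V\abs{\sigma}\,d\mu$ for every open $V$ directly from the definition of $\mu$. For the reverse inequality I would invoke Lusin's theorem to approximate the merely $\mu$-measurable polar field $\sigma/\abs{\sigma}$ by a continuous field of norm $\leq 1$, localized near $V$, producing admissible $\vp$ with $\int_V\vp\cdot\sigma\,d\mu$ as close as desired to $\int_V\abs{\sigma}\,d\mu$; hence $\mu(V)\geq\int_V\abs{\sigma}\,d\mu$. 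Therefore $\mu(V)=\int_V\abs{\sigma}\,d\mu$ for all open $V$, i.e.\ $d\mu=\abs{\sigma}\,d\mu$, which together with $\abs{\sigma}\leq 1$ forces $\abs{\sigma}=1$ $\mu$-a.e. I expect this last step --- the Lusin approximation of the non-continuous polar field and the resulting identification $\mu=\abs{\sigma}\,\mu$ --- to be the main obstacle; everything preceding it is essentially bookkeeping with partitions of unity, the Carath\'eodory criterion, and the $L^1$--$L^\infty$ duality.
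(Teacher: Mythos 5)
The paper offers no proof of this statement---it is quoted verbatim from Evans and Gariepy \cite[Theorem 1, Section 1.8]{evans2015measure}---and your outline reproduces exactly the standard argument of that reference: the variational definition of $\mu$ on open sets extended to a Borel regular outer measure, the key bound $\abs{L(\vp)}\leq\int_{\rn}\abs{\vp}\,d\mu$ followed by density of $C_c$ and $L^1$--$L^\infty$ duality to produce $\sigma$ with $\abs{\sigma}\leq 1$, and the Lusin-type approximation of the polar field to force $\mu(V)=\int_V\abs{\sigma}\,d\mu$ and hence $\abs{\sigma}=1$ $\mu$-a.e. Your sketch is correct and takes the same route as the cited proof, so there is nothing further to compare.
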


Next, we claim that the $TV^r$ semi-norm is a \emph{Radon} measure.
\begin{lemma}\label{jinleyidian}
Given a functions $u\in BV^r(Q)$, there exists a Radon measure 
$\mu$ on $Q$ and a $\mu$-measurable function $\sigma: Q\to\rn$ such that 
\begin{enumerate}[1.]
\item
$\abs{\sigma(x)}=1$ $\mu$-a.e., and
\item
$\int_Q u\,\divg^r \vp\,dx=-\int_Q \vp\cdot\sigma \,d\mu$ for all $\vp\in C_c^\infty(Q;\rn)$.
\end{enumerate}
\end{lemma}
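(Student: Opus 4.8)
The plan is to apply the Riesz representation theorem (Theorem \ref{riesz_theorem}) to the linear functional defined by $L(\vp) := \int_Q u\, \divg^r \vp\, dx$ for $\vp \in C_c^\infty(Q; \rn)$. First I would observe that since $u \in BV^r(Q)$, the quantity $TV^r(u)$ is finite, which by Definition \ref{isotropic_frac_variation} means precisely that
\be
\sup\flp{\int_Q u\, \divg^r \vp\, dx:\,\, \vp \in C_c^\infty(Q;\rn),\,\, \abs{\vp}_{\ell^p}^\ast \leq 1} = TV^r(u) < +\infty.
\ee
In particular, $L$ is a bounded linear functional on $C_c^\infty(Q;\rn)$ with respect to the sup norm, and hence extends by density to a bounded linear functional on $C_c(Q;\rn)$ (equivalently, on $C_c(\rn;\rn)$ by trivial extension, since the support constraint in Theorem \ref{riesz_theorem} is harmless here as $Q$ is bounded). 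The bound $\abs{L(\vp)} \leq TV^r(u) \norm{\vp}_{L^\infty}$ gives exactly the hypothesis of the Riesz theorem for every compact $K \subset \rn$.

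Next I would invoke Theorem \ref{riesz_theorem} to obtain a Radon measure $\mu$ on $\rn$ (restricted to $Q$, or supported in $\bar Q$) and a $\mu$-measurable function $\sigma: Q \to \rn$ with $\abs{\sigma} = 1$ $\mu$-a.e., such that $L(\vp) = \int_Q \vp \cdot \sigma\, d\mu$ for all $\vp \in C_c(Q;\rn)$, and in particular for all test fields $\vp \in C_c^\infty(Q;\rn)$. Rewriting, this yields
\be
\int_Q u\, \divg^r \vp\, dx = \int_Q \vp \cdot \sigma\, d\mu \qquad \text{for all } \vp \in C_c^\infty(Q;\rn).
\ee
Replacing $\sigma$ by $-\sigma$ (which still has unit modulus $\mu$-a.e.) gives the sign convention stated in the lemma, namely $\int_Q u\, \divg^r \vp\, dx = -\int_Q \vp \cdot \sigma\, d\mu$. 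This establishes both assertions.

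The only point requiring genuine care — and the step I expect to be the main obstacle — is the passage from test functions in $C_c^\infty(Q;\rn)$ to $C_c(Q;\rn)$, since the functional $L$ is a priori only defined on smooth test fields (the fractional divergence $\divg^r$ acts on $\vp$). The resolution is that $L$ is Lipschitz with constant $TV^r(u)$ in the $L^\infty$ norm, so it is uniformly continuous on the $\norm{\cdot}_\infty$-dense subset $C_c^\infty(Q;\rn) \subset C_c(Q;\rn)$ (with respect to the inductive limit topology, density of smooth compactly supported functions in continuous compactly supported functions is standard via mollification), hence extends uniquely to a bounded functional on $C_c(Q;\rn)$ preserving the bound. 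Once this extension is in hand, Theorem \ref{riesz_theorem} applies verbatim. One should also note that, strictly speaking, the measure $\mu$ produced may have mass on $\partial Q$ if one works on $\rn$; but restricting attention to $\vp \in C_c^\infty(Q;\rn)$ and to the open set $Q$ is consistent with the statement, so no boundary subtlety actually arises in the conclusion as formulated.
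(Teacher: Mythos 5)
Your proposal is correct and follows essentially the same route as the paper: bound the linear functional $L(\vp)=\pm\int_Q u\,\divg^r\vp\,dx$ by $TV^r(u)\norm{\vp}_{L^\infty(Q)}$, extend it from $C_c^\infty(Q;\rn)$ to $C_c(Q;\rn)$ by mollification and uniform convergence, and then apply the Riesz representation theorem (Theorem \ref{riesz_theorem}). The only cosmetic difference is that the paper builds the minus sign into the definition of $L$, whereas you flip $\sigma$ at the end; both yield the stated sign convention.
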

\begin{proof}
We first define the linear functional 
\be
L:\,\, C_c^\infty(Q;\mathbb M^{N\times(N^\ir)})\to\R,
\qquad L(\vp):=-\int_Q u\,\divg^r\vp\,dx.
\ee
 Since $TV^r(u)<+\infty$, we have, in view of \eqref{RLFODr},
\be
\sup\flp{\frac1{\norm{\vp}_{L^\infty(Q)}}\int_Qu\, \divg^r \vp \,dx:\text{ for }\vp\in C_c^\infty(Q;\M^{N\times (N^\ir)})}=TV^r(u)<+\infty.
\ee
Thus, 
\be\label{smooth_define}
\abs{L(\vp)}\leq TV^r(u)\norm{\vp}_{L^\infty(Q)}.
\ee
Now we extend by continuity the definition of $L$ to the entire space $C_c(Q;\rn)$:
 for an arbitrary $\vp\in C_c(Q;\rn)$, we consider the mollifications $\vp_\e:=\vp\ast\eta_\e$ 
(for some ininfluent mollifier $\eta_e$) and, by \cite[Theorem 1, item (ii), Section 4.2]{evans2015measure}, 
\be\label{uniform_approach}
\vp_\e\to\vp\text{ uniformly on }Q.
\ee
Therefore, we by defining
\be
\bar L(\vp):=\lime L(\vp_\e)\text{ for }\vp\in C_c(Q;\rn),
\ee
in view of \eqref{smooth_define} and \eqref{uniform_approach}, we conclude that 
\be
\sup\flp{\bar L(\vp):\text{ for }\vp\in C_c(Q;\M^{N\times (N^{k})})\text{ and }\abs{\vp}\leq 1}<+\infty.
\ee
Thus, by Theorem \ref{riesz_theorem}, the proof is complete.
\end{proof}
%
Next, by relying on the geometry of the domain $Q=(0,1)^N$, 
we can further restrict the smooth approximation function of $u\in SV^r(Q)$ by forcing them
to belong to $C^\infty(\bar Q)$.
\begin{proposition}[strict approximation with smooth functions]\label{approx_smooth}
Let $r\in\R^+$ and $u\in SV^r(Q)$ be given. Then there exists a sequence $\seqn{u_n}\subset C^\infty(\bar Q)\cap BV^r(Q)$ such that 
\be\label{strictly_ATV_approx}
u_n\to u\text{ strongly in }L^1(Q)\text{ and }\limn {TV^r}(u_n)={TV^r}(u).
\ee
\end{proposition}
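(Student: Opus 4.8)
The plan is to start from the definition of $SV^r(Q)$ in \eqref{BV_r_p_diff_ss}: since $u\in SV^r(Q)$, for each fixed $p\in[1,+\infty]$ there exists a sequence $\seqn{v_n}\subset C^\infty(Q)$ with $v_n\wtogsp u$, i.e. $v_n\to u$ strongly in $L^1(Q)$ and $TV^r_{\ell^p}(v_n)\to TV^r_{\ell^p}(u)$. The only gap between this and the claimed conclusion is that the approximating functions are merely in $C^\infty(Q)$ (smooth in the open cube, possibly blowing up near $\partial Q$) rather than in $C^\infty(\bar Q)$. So the task is a purely local-regularization-near-the-boundary argument: I would fix such a sequence $\seqn{v_n}$ (working, by Remark \ref{an_iso_equ}, with a single representative norm, say $\ell^2$, since the $TV^r_{\ell^p}$ are all comparable), and for each $n$ produce $u_n\in C^\infty(\bar Q)$ with $\norm{u_n-v_n}_{L^1(Q)}$ small and $\abs{TV^r(u_n)-TV^r(v_n)}$ small, then diagonalize.

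The key step is the boundary smoothing. Here I would exploit the geometry of $Q=(0,1)^N$: a cube can be slightly shrunk and the shrunk copy dilated back onto $Q$. Concretely, for a parameter $\lambda<1$ close to $1$ define the rescaling $T_\lambda x := \lambda x$ (or, symmetrized about the center, $T_\lambda x := \lambda(x-\tfrac12\mathbf 1)+\tfrac12\mathbf 1$), and set $w_n^\lambda(x):=v_n(T_\lambda x)$, which is defined and smooth on a neighborhood of $\bar Q$ since $T_\lambda(\bar Q)\subset\subset Q$; hence $w_n^\lambda\in C^\infty(\bar Q)$. As $\lambda\to 1$ one has $w_n^\lambda\to v_n$ in $L^1(Q)$ by continuity of translations/dilations on $L^1$. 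For the $TV^r$ term I would compute how the fractional divergence interacts with the dilation: by the change of variables and the scaling identity $d^r(w(\lambda\,\cdot))(x)=\lambda^r (d^rw)(\lambda x)$ for the Riemann–Liouville derivative (which follows from the definition \eqref{R_L_frac_1d_left} by substitution, using also Lemma \ref{power_function_s}-type scaling), testing against $\vp\in C_c^\infty(Q)$ and changing variables $y=T_\lambda x$ turns the defining supremum for $TV^r(w_n^\lambda)$ into a supremum over rescaled test functions supported in $T_\lambda(Q)$, yielding an identity of the form $TV^r(w_n^\lambda)=\lambda^{r-N}\,[\text{something}]\to TV^r(v_n)$ as $\lambda\to1$ — more carefully, one shows $\limsup_{\lambda\to1}TV^r(w_n^\lambda)\le TV^r(v_n)$ directly from the scaling, and $\liminf\ge TV^r(v_n)$ from the lower semicontinuity of $TV^r$ under $L^1$ convergence (Theorem \ref{weak_star_comp_s}, condition (1), since $w_n^\lambda\to v_n$ a.e. and are locally uniformly integrable on $Q$). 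Thus $\limsup_{\lambda\to1}\abs{TV^r(w_n^\lambda)-TV^r(v_n)}=0$ for each fixed $n$.

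Finally I would diagonalize: choose $\lambda_n\uparrow 1$ fast enough that $u_n:=w_n^{\lambda_n}$ satisfies $\norm{u_n-v_n}_{L^1(Q)}\le 1/n$ and $\abs{TV^r(u_n)-TV^r(v_n)}\le 1/n$; combined with $v_n\to u$ in $L^1$ and $TV^r(v_n)\to TV^r(u)$, this gives $u_n\to u$ in $L^1(Q)$ and $TV^r(u_n)\to TV^r(u)$, and $u_n\in C^\infty(\bar Q)\cap BV^r(Q)$ as required by \eqref{strictly_ATV_approx}. The main obstacle I anticipate is the bookkeeping in the scaling identity for the fractional divergence near the boundary: unlike the classical case, the Riemann–Liouville derivative is nonlocal and the lower limit of integration sits at the boundary $x_i=0$, so one must check that the dilation $T_\lambda$ is compatible with the base point of the fractional integral (this is precisely why the cube geometry, with its flat faces aligned to the coordinate axes and the RL derivative anchored at $0$, is used) and that the rescaled test functions remain admissible competitors; once the clean scaling relation $\partial_i^r(v\circ T_\lambda)=\lambda^r(\partial_i^r v)\circ T_\lambda$ is established, the rest is routine.
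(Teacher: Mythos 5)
Your proposal is correct and follows essentially the same route as the paper: the paper also takes a $C^\infty(Q)$ sequence from the definition of $SV^r(Q)$, dilates about the center $x_0=(1/2,\dots,1/2)$ via $u_n^\e(x)=u_n\bigl(\frac{x-x_0}{1+\e}+x_0\bigr)$ to land in $C^\infty(\bar Q)$, bounds $TV^r(u_n^\e)\le(1+\e)^{\ir+1}TV^r(u_n)$ by a change of variables in the defining supremum, gets the matching lower bound from the lower semicontinuity of Theorem \ref{weak_star_comp_s}, and diagonalizes. The nonlocal base-point issue you flag at the end is indeed the only delicate point, and the paper treats it no more carefully than you propose to.
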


\begin{proof}
Let $u\in SV^r(Q)$ be given. In view of Definition \ref{strict_conv_BVr}, there exists sequence $\seqn{u_n}\subset C^\infty(Q)$ such that 
\be\label{wtogs_un_u}
u_n\wtogs u.
\ee
Next, let $x_0:=(1/2,\cdots,1/2)$ be the center of $Q$. For sufficiently small $\e>0$, we define
\be\label{u_n_eps_construct}
u_n^\e(x):=u_n\Big(\frac{x-x_0}{1+\e}+x_0 \Big)\text{ for }x\in Q.
\ee
Note that by construction, $u_n^\e$ will be a scaled version of the restriction $u_{n}$ to $Q_\e:={Q}/\fsp{1+\e}$.
Since $u_n\in C^\infty(\overline{Q_\e})$, we have $u_n^\e\in C^\infty(\bar Q)$ too. \\\\
We next show that $TV^r(u^\e_n)\to TV^r(u_n)$. Let $\e>0$ be fixed, then clearly we have (see e.g. Lemma \ref{jinleyidian})
\be\label{smaller_set_tv}
TV^r(u_n)\geq \sup\flp{\int_{Q_\e} u_n\divg^r\vp\,dx:\,\,\vp\in C_c^\infty(Q_\e), |\vp|\le 1}.
\ee
 On the other hand, for any $\vp\in C_c^\infty (Q)$, we have
\begin{align*}
\int_Q u_n^\e\divg^r\vp\,dx 
&=\int_Q u_n\fsp{(x-x_0)/(1+\e)+x_0}\divg^r\vp(x)\,dx\\
 &\leq (1+\e)^{\ir+1} \int_{Q_\e} u_n(y)\,\divg^r\vp((y-x_0)(1+\e)+x_0) dy\\
 &\leq (1+\e)^{\ir+1} TV^r(u_n),
\end{align*}
where in the last inequality we used \eqref{smaller_set_tv}. Hence, we have 
\be\label{limsup_simple_geometry}
TV^r(u_n^\e)\leq (1+\e)^{\ir+1} TV^r(u_n).
\ee
On the other hand, in view of \eqref{u_n_eps_construct}, we have $u_n^\e\to u_n$ strongly in $L^1(Q)$. By Proposition \ref{weak_star_comp_s}, we obtain that 
\be
\liminf_{\e\to 0}TV^r(u_n^\e)\geq TV^r(u_n).
\ee
This, combined with \eqref{limsup_simple_geometry}, implies that $u_n^\e\wtogs u_n$. 
Combined with \eqref{wtogs_un_u}, we infer the existence of a subsequence 
$\seqn{u_{\e_n}}\subset C^\infty(Q)$ such that $u_{\e_n}\wtogs u$, which concludes
the proof of \eqref{strictly_ATV_approx}.
\end{proof}
%
\BLK
\begin{remark}\label{smooth_use_here}
We again emphasize that, in view of Remark \ref{int_iso_equ} and \cite[Proposition 3.5]{zhang2015total}, we have for $u\in C^\infty(Q)\cap BV^r(Q)$,
\be
TV_{\ell^1}^{l+s}(u)=\int_Q \abs{\nabla^{l+s}u}dx=\sum_{\abs{\alpha_s}=l+1}\int_0^1\cdots\int_0^1\abs{\partial^{\alpha_s} u(x_1,\ldots,x_N)}dx_1\ldots dx_N.
\ee
This allows us to compute multi-dimensional an-isotropic total variation $TV_{\ell^1}^{l+s}(u)$, by computing one dimension total variations $TV^{l'+s}(w)$, $l'=0,1,\ldots, l$, $w\in L^1(I)$, along each coordinate axis.
\end{remark}

\section{Analytic properties of function space $SV^r(Q)$}\label{sec_functional_lsc}

\begin{remark}
As the goal of this article is to construct models specifically for imaging applications, we assume the function $u\in L^1(Q)$ analyzed here represents an image.
That is, we could restrict ourself to consider only functions in the space
\be\label{image_function}
\IM(Q):=\flp{u\in BV(Q):\,\,\norm{T[u]}_{L^\infty(\partial Q)}\leq 1}.
\ee
We remark that most of the conclusions of this article are independent of \eqref{image_function}. However, certain results can be improved 
with \eqref{image_function}. We shall always highlight the assumption of \eqref{image_function} when we use it. We also note that assumption \eqref{image_function} is crucial in numerical realization of fractional order derivative. We refer readers to \cite[Section 4]{chen2013fractional} and the references therein.
\end{remark}
\subsection{Compact embedding for $BV^s$ with fixed $s\in(0,1)$}\label{sec_compact_fixed_s}
We start by recalling the following theorem from \cite{brezis2010functional}.
\begin{theorem}[{\cite[Theorem 4.26]{brezis2010functional}}]\label{lp_compactness}
Let $\mathcal F$ be a bounded set in $L^p(\rn)$ with $1\leq p<+\infty$. Assume that 
\be
\lim_{\abs{h}\to 0}\norm{\tau_h f-f}_{L^p(\rn)}=0\text{ uniformly in }f\in\mathcal F.
\ee
Then, the closure of $\mathcal F\lfloor \Om$ in $L^p(\Om)$ is compact for any measurable set $\Om\subset \rn$ with finite measure.
\end{theorem}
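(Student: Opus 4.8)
The statement is the classical Fr\'echet--Kolmogorov compactness criterion, and the plan is to prove it by the mollification-plus-truncation scheme, reducing relative compactness in $L^p(\Om)$ to the Arzel\`a--Ascoli theorem on a large ball.

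First I would fix a standard mollifier $\rho\in C_c^\infty(\rn)$ with $\rho\geq0$ and $\int_{\rn}\rho\,dx=1$, set $\rho_\e(x):=\e^{-N}\rho(x/\e)$ (so $\operatorname{spt}\rho_\e\subset B_\e$), and record two bounds that are \emph{uniform over} $f\in\FF$. From $(f\ast\rho_\e)(x)-f(x)=\int_{\rn}\fsp{f(x-h)-f(x)}\rho_\e(h)\,dh$ and Minkowski's integral inequality,
\bes
\norm{f\ast\rho_\e-f}_{L^p(\rn)}\leq\int_{\rn}\norm{\tau_h f-f}_{L^p(\rn)}\,\rho_\e(h)\,dh\leq\Theta(\e):=\sup_{g\in\FF}\ \sup_{\abs h\leq\e}\norm{\tau_h g-g}_{L^p(\rn)},
\ees
where $\tau_h f:=f(\cdot-h)$, and $\Theta(\e)\to0$ as $\e\to0$ is exactly the uniform-in-$f$ hypothesis. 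Second, writing $p'$ for the conjugate exponent of $p$ and $M_0:=\sup_{g\in\FF}\norm{g}_{L^p(\rn)}<+\infty$, Young's convolution inequality gives
\bes
\norm{f\ast\rho_\e}_{L^\infty(\rn)}\leq A_\e:=\norm{\rho_\e}_{L^{p'}(\rn)}M_0,\qquad\norm{\nabla(f\ast\rho_\e)}_{L^\infty(\rn)}\leq B_\e:=\norm{\nabla\rho_\e}_{L^{p'}(\rn)}M_0.
\ees

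Next I would fix $\e>0$ and invoke Arzel\`a--Ascoli: on the compact ball $\overline{B_R}$ the functions $(f\ast\rho_\e)|_{\overline{B_R}}$, $f\in\FF$, are uniformly bounded (by $A_\e$) and uniformly Lipschitz (with constant $B_\e$), hence this family is relatively compact in $C(\overline{B_R})$; composing with the bounded linear restriction $C(\overline{B_R})\to L^p(\Om\cap B_R)$ then shows that $\flp{(f\ast\rho_\e)|_{\Om\cap B_R}:f\in\FF}$ is totally bounded in $L^p(\Om\cap B_R)$. Finally I would assemble a finite $\eta$-net for $\flp{f|_\Om:f\in\FF}$ in $L^p(\Om)$ as follows. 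Given $\eta>0$, pick $\e$ with $\Theta(\e)<\eta/3$; since $\abs\Om<+\infty$ we have $\abs{\Om\setminus B_R}\to0$ as $R\to\infty$, so fix $R$ with $A_\e\abs{\Om\setminus B_R}^{1/p}<\eta/3$ and set $\om:=\Om\cap B_R$; by the previous step choose a finite $(\eta/3)$-net $g_1,\dots,g_M\in L^p(\om)$ for $\flp{(f\ast\rho_\e)|_{\om}}$ and extend each $g_j$ by $0$ to $\Om$. For $f\in\FF$ choose $j$ with $\norm{f\ast\rho_\e-g_j}_{L^p(\om)}<\eta/3$; then, using $g_j\equiv0$ on $\Om\setminus\om$, $\norm{f\ast\rho_\e}_{L^\infty(\rn)}\leq A_\e$, and $\norm{f-f\ast\rho_\e}_{L^p(\rn)}\leq\Theta(\e)$,
\bes
\norm{f-g_j}_{L^p(\Om)}\leq\norm{f-f\ast\rho_\e}_{L^p(\rn)}+\norm{f\ast\rho_\e-g_j}_{L^p(\om)}+A_\e\abs{\Om\setminus\om}^{1/p}<\frac\eta3+\frac\eta3+\frac\eta3=\eta.
\ees
Hence $\flp{f|_\Om:f\in\FF}$ is totally bounded in the complete space $L^p(\Om)$, and its closure is therefore compact.

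The main obstacle is the quantifier order in this last step: since $A_\e\to+\infty$ as $\e\to0$, the truncation error $A_\e\abs{\Om\setminus B_R}^{1/p}$ can only be absorbed \emph{after} $\e$ has been frozen, so $R$ must be chosen last; choosing the truncation radius first would break the argument. The other delicate point is the passage from the hypothesis on translations to the $f$-independent bound $\Theta(\e)$ via Minkowski's integral inequality, which is precisely where the uniformity over $\FF$ is consumed. Once those two subtleties are handled, the Arzel\`a--Ascoli step and the triple-$\eta/3$ bookkeeping are routine.
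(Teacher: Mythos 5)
Your proof is correct: the paper states this result without proof, quoting it directly from \cite[Theorem 4.26]{brezis2010functional}, and your mollification-plus-Arzel\`a--Ascoli argument (with the uniform bound $\Theta(\e)$ extracted via Minkowski's integral inequality and the truncation radius chosen after $\e$) is essentially the standard proof given in that reference. Nothing further is needed.
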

The main result of Section \ref{sec_compact_fixed_s} reads as follows.
\begin{theorem}[compact embedding $BV^s(Q) \hookrightarrow L^1(Q)$]\label{ATV_real_embedding}
Let $s\in (0,1)$ be given. Assume $\seqn{u_n}\subset \text{ss-}BV^s(Q)$ satisfy
\be\label{image_condition_infty}
\sup\flp{\norm{u_n}_{L^\infty(\partial Q)}+\norm{u_n}_{BV^s(Q)}:\,\, n\in\N}<+\infty.
\ee
Then, there exists $u\in BV^s(Q)$ such that, upon subsequence, $u_n\to u$ strongly in $L^1(Q)$.
\end{theorem}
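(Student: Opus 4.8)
The plan is to exploit the one-dimensional structure of the anisotropic fractional total variation and reduce the multi-dimensional compactness statement to a Riesz--Fréchet--Kolmogorov argument (Theorem \ref{lp_compactness}), exactly as one does for the classical embedding $BV(Q)\hookrightarrow L^1(Q)$. The key device is Remark \ref{int_iso_equ}: since the bound \eqref{image_condition_infty} on $\norm{u_n}_{BV^s(Q)}$ is, up to the equivalence \eqref{equ_p_1_p}, a bound on $TV^s_{\ell^1}(u_n)$, and since $u_n\in\text{ss-}BV^s(Q)$ admits smooth approximants by Proposition \ref{approx_smooth}, we may write (for smooth $u_n$, and then pass the estimate to the limit)
\be
TV^s_{\ell^1}(u_n)=\sum_{i=1}^N\int_Q\abs{\partial_i^s u_n}\,dx,
\ee
so that each partial fractional variation $\int_Q\abs{\partial_i^s u_n}\,dx$ is uniformly bounded. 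Thus the problem genuinely decouples axis by axis, and the task becomes: estimate $\norm{\tau_{h e_i}u_n-u_n}_{L^1(Q)}$ in terms of $\abs{h}$ and $\int_Q\abs{\partial_i^s u_n}\,dx$, uniformly in $n$.

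First I would handle the genuinely one-dimensional estimate. Fix $i$, freeze the other variables, and let $w=u_n(\cdot,\hat x)\in W^{s,1}(I)$ (which holds for a.e. $\hat x$ by Fubini, once $u_n$ is smooth, or for the smooth approximants). By Theorem \ref{thm_MR3144452} and the Remark following it, $w$ has the representation $w(t)=\frac{c_0}{\Gamma(s)}t^{s-1}+\mathbb I^s\phi(t)$ with $\phi=d^s w\in L^1(I)$, and $c_0=T[w](0)$ is controlled by $\norm{u_n}_{L^\infty(\partial Q)}$, hence bounded. The translation increment $w(t+h)-w(t)$ then splits into the contribution of the explicit singular term $\frac{c_0}{\Gamma(s)}\bigl((t+h)^{s-1}-t^{s-1}\bigr)$ — whose $L^1(I)$ norm is $O(\abs{h}^s)$ by an elementary computation since $t\mapsto t^{s-1}$ is integrable and its translates converge in $L^1$ with a modulus $\abs{h}^s$ — plus the contribution of $\mathbb I^s\phi(t+h)-\mathbb I^s\phi(t)$. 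For this second term I would use the convolution structure $\mathbb I^s\phi=\frac1{\Gamma(s)}\,(t_+^{s-1})\ast\phi$ together with Young's inequality: $\norm{\tau_h(\mathbb I^s\phi)-\mathbb I^s\phi}_{L^1}\le \frac1{\Gamma(s)}\norm{\tau_h(t_+^{s-1})-t_+^{s-1}}_{L^1}\,\norm{\phi}_{L^1}=O(\abs{h}^s)\,\norm{d^s w}_{L^1(I)}$ (taking care of the boundary effects on the finite interval $I$, e.g. by extending $\phi$ by zero and using \eqref{eq_semigroup_frac_int} near the endpoints). Integrating this estimate over the frozen variables $\hat x$ and using the axiswise bound on $\int_Q\abs{\partial_i^s u_n}$ yields
\be
\norm{\tau_{h e_i}u_n-u_n}_{L^1(Q)}\le C\,\abs{h}^s\Bigl(\norm{u_n}_{L^\infty(\partial Q)}+\int_Q\abs{\partial_i^s u_n}\,dx\Bigr)\le C'\abs{h}^s,
\ee
uniformly in $n$.

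Summing over $i=1,\dots,N$ and interpolating arbitrary translations through axis-parallel ones gives $\lim_{\abs{h}\to0}\norm{\tau_h u_n-u_n}_{L^1(Q)}=0$ uniformly in $n$. Extending each $u_n$ to all of $\rn$ (say by the trace-controlled reflection or simply by zero after a harmless cutoff, using that $\norm{u_n}_{L^1}$ is bounded since $\norm{u_n}_{BV^s}$ controls it) and applying Theorem \ref{lp_compactness} with $p=1$, $\Om=Q$, yields a subsequence converging strongly in $L^1(Q)$ to some $u$. Finally, $u\in BV^s(Q)$ follows from the lower semicontinuity Theorem \ref{weak_star_comp_s} (condition (1), with the $L^1$ convergence upgraded to a.e. convergence along a further subsequence and local uniform integrability supplied by the $L^1$ bound). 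I expect the main obstacle to be the second bullet point above: controlling the translation increment of $\mathbb I^s\phi$ cleanly on the \emph{bounded} interval $I$ rather than on $\R$ — the endpoint of the convolution window moves with $h$, so one must either extend $\phi$ by zero and absorb the resulting boundary layer using the quantitative bound \eqref{eq_semigroup_frac_int} on a shrinking interval near $0$ and $1$, or argue directly that $\norm{\tau_h f-f}_{L^1}\to0$ with a uniform $\abs{h}^s$ modulus for $f$ ranging over $\mathbb I^s$ of an $L^1$-bounded set, which is the fractional analogue of the standard fact that $BV$ functions have an $O(\abs{h})$ translation modulus.
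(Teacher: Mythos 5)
Your proposal is correct in substance and shares the paper's overall architecture --- reduce to one dimension by slicing along coordinate axes (via Remark \ref{int_iso_equ}), establish a translation modulus $\norm{\tau_h \tilde u_n-\tilde u_n}_{L^1}\le C\abs{h}^s$ uniform in $n$ for the zero extensions, and invoke the Riesz--Fr\'echet--Kolmogorov criterion (Theorem \ref{lp_compactness}) --- but the key one-dimensional estimate is obtained by a genuinely different mechanism. The paper (Proposition \ref{one_d_cuoweixiangjian}) uses the fractional mean value formula $w(x+h)=w(x)+\frac{h^s}{\Gamma(1+s)}\hat d^s_L w(x+\theta h)$ for the \emph{revised} Riemann--Liouville derivative $\hat d^s_L$ of \eqref{revisied_RL}, together with the identity $\hat d^s_L w=d^s_L w-w(0)x^{-s}/\Gamma(1-s)$, so the $\abs{h}^s$ gain comes from an external mean-value result of \cite{MR2237634} and the trace term appears through $w(0)x^{-s}$. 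You instead use the $AC^{s,1}$ representation $w=\frac{c_0}{\Gamma(s)}t^{s-1}+\mathbb I^s\phi$ of Theorem \ref{thm_MR3144452} and get the $\abs{h}^s$ modulus from Young's inequality applied to the convolution kernel $t_+^{s-1}\chi_{(0,1)}$, whose own translation modulus in $L^1$ is $O(\abs{h}^s)$ by a direct computation; the trace hypothesis enters only to control (indeed annihilate) the coefficient $c_0$. Both routes deliver the same estimate with comparable constants. Your version has the advantage of staying entirely within tools the paper already quotes (Theorem \ref{thm_MR3144452} and \eqref{eq_semigroup_frac_int}) and of making the boundary contributions transparent: the layer near $t=1$ is handled by the bound $\norm{\mathbb I^s\phi}_{L^1(1-h,1)}\le \frac{h^s}{\Gamma(s+1)}\norm{\phi}_{L^1(I)}$, which you correctly identify as the delicate point and which is a cleaner substitute for the paper's $\abs{I^h\setminus I_h}\,\norm{w}_{L^\infty(\R)}$ term in \eqref{revised_mean_value3} (the latter implicitly assumes a global sup bound that is not obviously available). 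The paper's route is shorter once the mean value formula is granted; yours is more self-contained. The final upgrade from $u\in L^1(Q)$ to $u\in BV^s(Q)$ via Theorem \ref{weak_star_comp_s}, which the paper relegates to the ensuing Corollary, is handled correctly in your last step.
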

We prove Theorem \ref{ATV_real_embedding} in several steps. We first recall a revisited left sided $RL$ $s$-order derivative
from \cite{MR2237634},
which we denote by $\hat d_L^s w(x)$, as follows:
\be\label{revisied_RL}
\hat d^s_Lw(x):=\frac1{\Gamma(1-s)} \frac d{dx}\int_0^x \frac{w(t)-w(0)}{\fsp{x-t}^s}dt.
\ee
That is, the singularity of $d^s_L$ at the boundary $t=0$ is removed. 
Moreover, we remind that in dimension one, the semi-norms $TV$ and $TV_{\ell^1}$ are the equivalent.\BLK
\begin{proposition}\label{one_d_cuoweixiangjian}
Let $w\in BV^s(I)\cap C^\infty(I)$ be given. Let $TV^s(w,I)$ denote the $s$-order total variation of $w$ in $I=(0,1)$ and
\be
\tilde w(x):=
\begin{cases}
w(x)&\text{ for }x\in I\\
0&\text{ if }x\in \R\setminus I
\end{cases}
\ee
Then we have
\be\label{dengchaxiangjian_p1}
\norm{\tau_h\tilde w-\tilde w}_{L^1(\R)}\leq h^s \fsp{TV^s(w)+\norm{w}_{L^\infty(\partial I)}},
\ee
provided that $\norm{w}_{L^\infty(\partial I)}<+\infty$.
\end{proposition}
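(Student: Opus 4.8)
The plan is to estimate the translation difference $\norm{\tau_h\tilde w-\tilde w}_{L^1(\R)}$ by reducing it, via the fundamental theorem of calculus applied to the extended function, to a quantity controlled by the $s$-order total variation of $w$. First I would observe that since $w\in C^\infty(I)$, the function $\tilde w$ is smooth on the open interval and has jumps only at the endpoints $0$ and $1$, of sizes bounded by $\norm{w}_{L^\infty(\partial I)}$. Writing $\tau_h\tilde w(x)-\tilde w(x)=\int_x^{x+h}(\tilde w)'(t)\,dt$ in the distributional sense and integrating in $x$ over $\R$ using Fubini, one gets $\norm{\tau_h\tilde w-\tilde w}_{L^1(\R)}\le h\,\abs{D\tilde w}(\R)$, where $\abs{D\tilde w}(\R)=TV(w,I)+$ (boundary jump contributions). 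However, this produces a factor $h$ rather than $h^s$, and it bounds things by the classical $TV$, not $TV^s$; so this naive approach is not enough and must be refined.

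The key idea is instead to work with the revisited Riemann–Liouville derivative $\hat d^s_L$ from \eqref{revisied_RL}, which has no boundary singularity, and to exploit the semigroup structure of fractional integration from Theorem \ref{thm_MR1347689}. Concretely, since $w\in C^\infty(I)$ one can write $w(x)-w(0)=\mathbb I^s\big(\hat d^s_L w\big)(x)$ (this is the content of representability: $\hat d^s_L$ inverts $\mathbb I^s$ once the constant $w(0)$ is subtracted), so that on $I$, $\tilde w$ agrees with $w(0)+\mathbb I^s g$ where $g:=\hat d^s_L w$. Then I would estimate the translates of $\mathbb I^s g$ directly from the convolution-type formula $(\mathbb I^s g)(x)=\frac1{\Gamma(s)}\int_0^x (x-t)^{s-1}g(t)\,dt$: the difference $(\mathbb I^s g)(x+h)-(\mathbb I^s g)(x)$ is controlled, after changing variables, by $\frac1{\Gamma(s)}\int |(x+h-t)_+^{s-1}-(x-t)_+^{s-1}|\,|g(t)|\,dt$, and integrating in $x$ and using that $\int_\R |(\xi+h)_+^{s-1}-\xi_+^{s-1}|\,d\xi = C_s h^s$ (the kernel $\xi_+^{s-1}$ has an $L^1$-modulus of continuity of order $h^s$ because $0<s<1$) yields exactly the $h^s$ scaling, with the $L^1$-norm of $g=\hat d^s_L w$ appearing. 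The remaining task is to relate $\norm{\hat d^s_L w}_{L^1(I)}$ and the boundary contributions to $TV^s(w)+\norm{w}_{L^\infty(\partial I)}$, using the definition of $TV^s$ as a supremum against test functions together with \eqref{eq_semigroup_frac_int} and Lemma \ref{power_function_s}.

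The main obstacle I anticipate is precisely the bookkeeping at the boundary: the extension $\tilde w$ to $\R\setminus I$ by zero interacts with both the constant term $w(0)$ and the endpoint value $w(1)$, and these boundary jumps must be absorbed into the $\norm{w}_{L^\infty(\partial I)}$ term without spoiling the $h^s$ power. One has to handle separately the contribution of translates across $x=0$ and $x=1$ — where $\tilde w$ jumps by at most $\norm{w}_{L^\infty(\partial I)}$, contributing a term of size $h\,\norm{w}_{L^\infty(\partial I)}\le h^s\,\norm{w}_{L^\infty(\partial I)}$ for $h\le1$ — from the interior contribution, which is the genuinely fractional part estimated above. A careful splitting $\R = (-\infty,0)\cup(0,1)\cup(1,\infty)$ for the translation integral, combined with the identification $TV^s(w)=\norm{\hat d^s_L w}_{L^1(I)}$ (up to the boundary correction, via Remark \ref{int_iso_equ}-type reasoning and \eqref{TV_sobolevl_equiv}), should close the argument; I would also double-check that the constant $C_s=\frac1{\Gamma(s)}\int_\R |(\xi+1)_+^{s-1}-\xi_+^{s-1}|\,d\xi$ is bounded by $1$ (or can be folded into the stated estimate), since the proposition claims the clean bound $h^s(TV^s(w)+\norm{w}_{L^\infty(\partial I)})$ with no explicit dimensional constant.
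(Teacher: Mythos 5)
Your proposal is correct in outline but follows a genuinely different route from the paper. The paper's proof is pointwise: it invokes the fractional mean value formula of \cite[Corollary 4.3]{MR2237634}, $w(x+h)=w(x)+\frac{h^s}{\Gamma(1+s)}\hat d_L^s w(x+\theta h)$, rewrites $\hat d^s_L w = d^s_L w - w(0)\frac{x^{-s}}{\Gamma(1-s)}$, splits $\R$ into the same four cases you anticipate at the boundary, and then integrates the pointwise bound. You instead use the integral representation $w-w(0)=\mathbb I^s\fsp{\hat d^s_L w}$ together with the $L^1$-modulus of continuity of the kernel $\xi_+^{s-1}$, namely $\int_\R\abs{(\xi+h)_+^{s-1}-\xi_+^{s-1}}\,d\xi=\tfrac{2}{s}h^s$, and conclude by Fubini. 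Both arguments reduce the claim to controlling $\norm{\hat d^s_L w}_{L^1(I)}\le TV^s(w)+\norm{w}_{L^\infty(\partial I)}[\Gamma(2-s)]^{-1}$ and to the same boundary bookkeeping (on the strips of width $\abs{h}$ near $0$ and $1$ you must write $\abs{w(x)}\le\abs{w(1)}+\abs{w(1)-w(x)}$ and estimate the second term by the interior mechanism, exactly as the paper does; the jump alone does not suffice). What your route buys is self-containedness and robustness: the Fubini/convolution estimate is elementary and avoids the external mean value theorem, whose intermediate point $x+\theta h$ the paper integrates over somewhat loosely (it silently replaces $x+\theta h$ by $x$ when passing to the $L^1$ norm). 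What it costs is an explicit constant $\tfrac{2}{\Gamma(1+s)}\fsp{1+[\Gamma(2-s)]^{-1}}$ in front of $h^s$, which is not $\le 1$; but the paper's own final display carries the constant $[\Gamma(2-s)]^{-1}+[\Gamma(1+s)]^{-1}$, which likewise exceeds $1$ for some $s$, so this cosmetic discrepancy with the stated clean bound is shared by both proofs and is harmless for the only downstream use (Theorem \ref{ATV_real_embedding}), where any $s$-dependent constant suffices.
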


\begin{proof}
We start by recalling the fractional mean value formula for $0<s<1$ from \cite[Corollary 4.3]{MR2237634}:  for $h\in\R$ small,
\be\label{revised_mean_value}
w(x+h)=w(x)+h^s\frac{1}{\Gamma(1+s)}\hat d_L^sw(x+\theta h),
\ee
where the revised left-sided $RL$ $s$-order derivative $\hat d^s_L$ is defined in \eqref{revisied_RL} and $\theta\in\R$, say $\theta(h)$, depends upon $h$, and satisfies
\be
\lim_{h\to 0}\theta^s(h)=\frac{\Gamma(1+s)^2}{\Gamma(1+2s)}.
\ee
We note that 
\begin{align}
\hat d^s_Lw(x)&=\frac1{\Gamma(1-s)} \frac d{dx}\int_0^x \frac{w(t)-w(0)}{\fsp{x-t}^s}dt\notag \\
&= \frac1{\Gamma(1-s)} \frac d{dx}\int_0^x \frac{w(t)}{\fsp{x-t}^s}dt - \frac1{\Gamma(1-s)} \frac d{dx}\int_0^x \frac{w(0)}{\fsp{x-t}^s}dt \notag \\
&=d^s_L w(x)-w(0)\frac1{\Gamma(1-s)}\frac1{x^s}.\label{revised_mean_value2}
\end{align}
We also, by the definition of $\tilde w$, summarize the following 4 cases (w.l.o.g we only consider $\abs{h}<0.1$).
\begin{enumerate}[{Case} 1.]
\item Both $x$ and $(x+h)\in\R\setminus I$. In this case we have
\be
\abs{\tilde w(x+h)-\tilde w(x)}=0;
\ee
\item
$x\in I$, $h<0$ and $x+h<0$. In this case we have $0<x<-h$ and
\be
\abs{\tilde w(x+h)-\tilde w(x)}\leq \abs{\tilde w(x+h)- w(0)}+\abs{w(0)- w(x)}=\abs{w(0)}+\abs{w(0)- w(x)};
\ee
\item
$x\in I$, $h>0$ and $x+h>1$. In this case we have $0<1-x<h$ and
\be
\abs{\tilde w(x+h)-\tilde w(x)}\leq \abs{\tilde w(x+h)- w(1)}+\abs{w(1)- w(x)}=\abs{w(1)}+\abs{w(1)- w(x)};
\ee
\item
Both $x$ and $x+h\in I$. In this case we have 
\be
\abs{\tilde w(x+h)-\tilde w(x)}=\abs{ w(x+h)- w(x)}.
\ee
\end{enumerate}
We now claim \eqref{dengchaxiangjian_p1}. 
In any of above cases, we could deduce that 
\be\label{revised_mean_value3}
\norm{\tilde \tau_hw-\tilde w}_{L^1(\R)}\leq \abs{I^h\setminus I_h}\norm{w}_{L^\infty(\R)}+\norm{ \tau_hw- w}_{L^1(I_h)},
\ee
where $I_h=(\abs{h},1-\abs{h})$ and $I^h=(-\abs{h},1+\abs{h})$. Next, for $x$ and $x+h\in I$, we observe that
\begin{align*}
\abs{\tilde w(x+h)-\tilde w(x)}&\leq h^s\frac{1}{\Gamma(1+s)}\abs{\hat d_L^sw(x+\theta h)}\\
&\leq h^s\frac{1}{\Gamma(1+s)}\fmp{\abs{d^s_L w(x)}+w(0)\frac1{\Gamma(1-s)}\frac1{x^s}},
\end{align*}
where we used \eqref{revised_mean_value} and \eqref{revised_mean_value2}.\\\\
That is, we have
\be
\norm{\tilde w(x+h)-\tilde w(x)}_{L^1(I_h)}\leq h^s\frac{1}{\Gamma(1+s)}\fmp{TV^s(w,I)+\norm{w}_{L^\infty(\partial I)}[\Gamma(1-s)(1-s)]^{-1}},
\ee
and combining with \eqref{revised_mean_value3} we obtain 
\be
\norm{\tilde \tau_hw-\tilde w}_{L^1(\R)}\leq h^s\fmp{[\Gamma(1-s)(1-s)]^{-1}+[\Gamma(1+s)]^{-1}}\fmp{TV^s(w)+\norm{w}_{L^\infty(\partial I)}},
\ee
hence \eqref{dengchaxiangjian_p1}.
\end{proof}

Now we are ready to prove Theorem \ref{ATV_real_embedding} by using the approximation and slicing argument.
\begin{proof}[Proof of Theorem \ref{ATV_real_embedding}]
We shall only argue in the case that $N=2$, as the general case $N\geq 3$ can be obtained similarly. Let $\seqn{u_n}\subset BV^s(Q)$ be given be such that the assumption \eqref{image_condition_infty} holds. Let $\tilde u_n\in L^1(\R^2)$ be defined as
\be
\tilde u_n(x):=
\begin{cases}
u_n(x) &\text{ if }x\in Q\\
0 &\text{ if }x\in \R^2\setminus Q
\end{cases}
\ee
Then, by Proposition \ref{one_d_cuoweixiangjian}, we have, for any given $x_2\in \R$ and small $h\in \R$,
\be
\norm{\tau_h \tilde u_n(t,x_2)-\tilde u_n(t,x_2)}_{L^1(\R)}\leq h^s C_s [TV_{\ell^1}(u_n(t,x_2),I)+\norm{u_n}_{L^\infty(\partial Q)}],
\ee
where $C_s:=\fmp{[\Gamma(1-s)(1-s)]^{-1}+[\Gamma(1+s)]^{-1}}$. Thus, we have, for vector  $h\in \R^2$ with small
norm and parallel to the $x_1$ axis, 
\be
\norm{\tau_h \tilde u_n-\tilde u_n}_{L^1(\R^2)}\leq h^s C_s [TV_{\ell^1}^s(u_n)+\norm{u_n}_{L^\infty(\partial Q)}].
\ee
Similarly, for $h\in \R^2$ with small norm and parallel to the $x_2$ axis, 
\be
\norm{\tau_h \tilde u_n-\tilde u_n}_{L^1(\R^2)}\leq h^s C_s [TV_{\ell^1}^s(u_n)+\norm{u_n}_{L^\infty(\partial Q)}].
\ee
For a general direction vector $h\in\R^2$, we write $h=(h_1,h_2)$, and 
\begin{align*}
\norm{\tau_h \tilde u_n-\tilde u_n}_{L^1(\R^2)}&\leq \norm{\tau_{h_1} \tilde u_n-\tilde u_n}_{L^1(\R^2)}+\norm{\tau_{h_2} \tilde u_n-\tilde u_n}_{L^1(\R^2)}\\
&\leq 2h^s C_s C\fmp{\norm{u_n}_{BV^s(Q)}+\norm{u_n}_{L^\infty(\partial Q)}}.
\end{align*}
Thus, in view of Theorem \ref{lp_compactness}, there exists $u\in L^1(Q)$
 such that $\tilde u_n\to u$ strongly in $L^1(Q)$. That is, $u_n\to u$ strongly in $L^1(Q)$ and the
 proof is complete.
\end{proof}
\begin{corollary}
Given a sequence $\seqn{u_n}\subset BV^s(Q)$ such that the assumptions of Theorem \ref{ATV_real_embedding} hold, then, up to a subsequence, then there exists $u\in BV^s(Q)$ 
such that 
\be
u_n\to u\text{ strongly in }L^1\text{ and }\liminfn \,TV^s(u_n)\geq TV^s(u).
\ee
\end{corollary}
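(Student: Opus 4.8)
The plan is to obtain the corollary as a direct combination of the compact embedding of Theorem~\ref{ATV_real_embedding} with the lower semicontinuity result of Theorem~\ref{weak_star_comp_s}.

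First I would invoke Theorem~\ref{ATV_real_embedding}: since $\seqn{u_n}\subset BV^s(Q)$ satisfies \eqref{image_condition_infty}, there exist $u\in BV^s(Q)$ and a subsequence, which I shall not relabel, such that $u_n\to u$ strongly in $L^1(Q)$. This already yields both the membership $u\in BV^s(Q)$ and the strong $L^1$-convergence asserted in the statement; it remains only to prove $\liminfn TV^s(u_n)\geq TV^s(u)$ along this subsequence.

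To this end I would check condition (1) of Theorem~\ref{weak_star_comp_s}. Passing to a further (again unrelabeled) subsequence, strong convergence in $L^1(Q)$ forces $u_n\to u$ pointwise a.e.\ in $Q$; moreover an $L^1(Q)$-convergent sequence is equi-integrable on the bounded set $Q$ (Vitali/Dunford--Pettis), hence in particular locally uniformly integrable, and $u\in L^1(Q)$. Thus the hypotheses of item~(1) of Theorem~\ref{weak_star_comp_s} are met, and that theorem gives $\liminfn TV^s(u_n)\geq TV^s(u)$, which completes the argument.

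I do not expect any real obstacle: the content lies entirely in Theorems~\ref{ATV_real_embedding} and~\ref{weak_star_comp_s}, and the corollary merely splices them together. The only mild subtleties are that one is free to pass to a further subsequence when upgrading $L^1$-convergence to a.e.\ convergence, since the conclusion is itself stated ``up to a subsequence'', and that strong $L^1$-convergence is precisely what supplies the local uniform integrability required by condition~(1); alternatively one could note that such sequences converge weakly-$\ast$ in $\mb(Q)$ and appeal to condition~(2) instead.
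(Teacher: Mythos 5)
Your proposal is correct and follows exactly the paper's route: the paper's proof of this corollary is the one-line statement that it follows by combining Theorems \ref{ATV_real_embedding} and \ref{weak_star_comp_s}, which is precisely what you do. The details you supply (passing to a further subsequence for a.e.\ convergence, noting that strong $L^1$-convergence yields the uniform integrability required by condition (1), or alternatively appealing to condition (2)) are a sound elaboration of that same argument.
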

\begin{proof}
The proof is done by combining Theorems \ref{ATV_real_embedding} and \ref{weak_star_comp_s}.
\end{proof}
\subsection{Lower semi-continuity with respect to sequences of orders}
We first perform our analysis for the an-isotropic total variation $TV_{\ell^1}^r$, and then use the equivalence condition (Remark \ref{an_iso_equ}) 
to recover the case of isotropic total variation $TV^r$.

\subsubsection{Interpolation properties of an-isotropic total variation}\label{subsub_monotone}

We start by studying an monotonicity result of $TV_{\ell^1}^s$ with respect to the order $s\in(0,1)$, for function $u\in IM(Q)$ (recall space $IM(Q)$ from \eqref{image_function}).
\begin{proposition}[monotonicity of an-isotropic $TV_{\ell^1}^s$]\label{bergounioux2017fractional}
Let $0< s<t<1$ be given and $u\in SV_{\ell^1}^{t}(Q)\cap\, IM(Q)$. Then we have $u\in BV^s(Q)\cap IM(Q)$ and
\be
TV_{\ell^1}^{s}(u)\leq TV_{\ell^1}^{t}(u).
\ee
Especially, at $s=0$, we have
\be
\norm{u}_{L^1(Q)}\leq TV_{\ell^1}^r(u).
\ee
\end{proposition}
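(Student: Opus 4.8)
The plan is to reduce everything to the one-dimensional setting via the axis-by-axis decomposition of $TV_{\ell^1}^r$ recorded in Remark \ref{int_iso_equ} and Remark \ref{smooth_use_here}, and then to establish the monotonicity for a single variable by exploiting the semigroup property of the fractional integration operator $\mathbb I^r$ from Theorem \ref{thm_MR1347689}. More precisely, for $u\in SV_{\ell^1}^t(Q)$ one first uses the strict smooth approximation from Proposition \ref{approx_smooth} to pick a sequence $\seqn{u_n}\subset C^\infty(\bar Q)\cap BV_{\ell^1}^t(Q)$ with $u_n\to u$ in $L^1(Q)$ and $TV_{\ell^1}^t(u_n)\to TV_{\ell^1}^t(u)$. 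For each smooth $u_n$ the an-isotropic seminorm is literally the integral $\int_Q|\nabla^t u_n|_{\ell^1}\,dx = \sum_i\int_Q|\partial_i^t u_n|\,dx$, and similarly for $s$, so the whole claim reduces to showing that for a fixed smooth function $w$ on $I=(0,1)$ with controlled boundary trace one has $\int_I|d^s w|\,dx \le \int_I|d^t w|\,dx$ (plus, at $s=0$, $\int_I|w|\,dx\le\int_I|d^t w|\,dx$, using that $d^0 w = w$ once one notes the $s\to 0$ limit from Lemma \ref{linftyds}). After establishing this for each $u_n$, one passes to the limit: the right-hand side converges to $TV_{\ell^1}^t(u)$, and the left-hand side is controlled below by $TV_{\ell^1}^s(u)$ using the lower semicontinuity Theorem \ref{weak_star_comp_s} (condition (1), since $u_n\to u$ in $L^1$ can be upgraded to a.e. convergence along a subsequence and the $u_n$ are locally uniformly integrable). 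This simultaneously shows $u\in BV^s(Q)$.

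For the one-dimensional inequality itself, the key algebraic identity is $d^s w = \mathbb I^{t-s}\,d^t w$ — i.e. differentiating $t$ times then integrating $t-s$ times fractionally recovers the $s$-th derivative — which is exactly an instance of the composition rule in Theorem \ref{thm_MR1347689}, Assertion \ref{MR1347689T2_5}, valid because $d^t w \in L^1(I)$ when $w$ is smooth enough (here one must be careful that the relevant representability/summability hypotheses hold, which is where the hypothesis $u\in SV^t$ rather than merely $BV^t$, and the control on the boundary trace, enters). Then from the estimate \eqref{eq_semigroup_frac_int},
\be
\int_I |d^s w|\,dx = \|\mathbb I^{t-s} d^t w\|_{L^1(I)} \le 1^{\,t-s}\frac{1}{(t-s)\Gamma(t-s)}\|d^t w\|_{L^1(I)} = \frac{1}{\Gamma(t-s+1)}\int_I|d^t w|\,dx,
\ee
and since $t-s\in(0,1)$ one has $\Gamma(t-s+1)=\Gamma(2-(1-(t-s)))\ge 1$ by the same Euler-reflection bound already used in the proof of Lemma \ref{linftyds} ($\sup_{\sigma\in(0,1)}1/\Gamma(2-\sigma)\le 1$), giving the clean inequality $\int_I|d^s w|\,dx\le\int_I|d^t w|\,dx$. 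The case $s=0$ is the special case $\mathbb I^t d^t w = w$ (again Theorem \ref{thm_MR1347689}, now with the condition on $w$ being representable, which is guaranteed for $u\in IM(Q)$ smooth with the boundary conditions), combined with the same $1/\Gamma(t+1)\le 1$ bound when $t\in(0,1)$; for $t\ge 1$ one iterates or invokes the semigroup directly with $\mathbb I^t = \mathbb I^{t-1}\mathbb I^1$ style decompositions.

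The main obstacle I anticipate is the justification of the identity $d^s w = \mathbb I^{t-s} d^t w$ at the level of integrability and boundary behavior: the fractional composition rules in Theorem \ref{thm_MR1347689} require the inner function to lie in the appropriate $\mathbb I^\rho(L^1)$ class, and for Riemann–Liouville derivatives this is precisely where the boundary singularities flagged throughout the paper bite — a generic $BV^t$ function need not be representable, which is why the statement is restricted to $SV^t(Q)\cap IM(Q)$. So I would spend the bulk of the argument verifying that for smooth $w$ on $\bar I$ with bounded trace, $d^t w$ (and $d^s w$) are genuinely summable and that the relevant vanishing-at-$0$ conditions \eqref{bdy_frac_cond} hold so that the semigroup law applies; once that bookkeeping is done, the inequality is immediate from \eqref{eq_semigroup_frac_int}. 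A secondary, more minor point is checking that the axis-by-axis reduction is legitimate for the intermediate order $s$ — i.e. that $TV_{\ell^1}^s(u)<+\infty$ a priori so that Remark \ref{smooth_use_here} applies — but this is handled a posteriori by the lower semicontinuity step, so one can run the argument first for the smooth approximants (where everything is an honest integral) and only take limits at the end.
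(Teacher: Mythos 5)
Your proposal follows essentially the same route as the paper's proof: verify representability $w\in\mathbb I^{t}(L^1(I))$ for smooth one-dimensional slices using the boundary-trace control, write $w=\mathbb I^{s}\mathbb I^{t-s}[d^{t}w]$ so that $d^{s}w=\mathbb I^{t-s}[d^{t}w]$, apply the operator-norm bound \eqref{eq_semigroup_frac_int}, reduce the multi-dimensional case axis by axis via Remark \ref{smooth_use_here}, and pass from $C^\infty$ approximants to $SV^{t}(Q)$ by strict approximation combined with the lower semicontinuity of Theorem \ref{weak_star_comp_s}. The only minor divergence is at the endpoint $s=0$, where you invoke $w=\mathbb I^{t}[d^{t}w]$ directly while the paper instead lets $s\searrow 0$ in the already-established inequality using Lemma \ref{linftyds}; both variants rest on the same estimates.
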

\begin{proof}
For a moment we assume that $0<s<t<1$. We again deal in one dimension first and we assume also that  $w\in BV^{t}(I)\cap C^\infty(I)$. We claim $w\in \mathbb I^{t}(L^1(I))$. Indeed, we have 
\be
d (\mathbb I^{1-s}w(x))=\frac d{dx}\int_0^x \frac{w(t)}{(x-t)^{s}}dt = d^{s} w(x).
\ee
Thus, by assumption we have
\be\label{cite_represent_frac1}
\abs{\mathbb I^{1-s}w}_{W^{1,1}(I)}=TV^{s}(w)<+\infty,
\ee
 and hence \eqref{inte_frac_cond} holds.\\\\
We next claim \eqref{bdy_frac_cond}. Indeed, we only need to consider the case $l=0$ and we observe that
\be
\abs{\mathbb I^{1-s}w(\e)}=\abs{\int_0^\e\frac{w(t)}{(\e-t)^s}dt}\leq \norm{w}_{L^\infty(\partial I)}\int_0^\e\frac{1}{(\e-t)^s}dt\leq  \norm{w}_{L^\infty(\partial I)}\e^{1-s}\to 0
\ee
which implies that
\be\label{cite_represent_frac2}
d^0 \mathbb I^{1-s}w(0)=0.
\ee
Thus, \eqref{cite_represent_frac1} and \eqref{cite_represent_frac2} allows us to use Theorem \ref{thm_MR1347689}, Assertion \ref{cite_represent_frac} 
to infer the existence of $f\in L^1(I)$ such that $w=\mathbb I^{t}[f]$. Moreover, in view of Theorem \ref{thm_MR1347689}, Assertion \ref{MR1347689T2_5}, we have
\be
w = \mathbb I^{t}[f] = \mathbb I^{s}\mathbb I^{t-s}[f].
\ee
We observe that
\be\label{point_norm_single}
\norm{d^{s}w}_{L^1(I)} = \norm{\mathbb I^{t-s}[f]}_{L^1(I)}\\
\leq \abs{\mathbb I^{t-s}}\norm{f}_{L^1(I)},
\ee
where by $\abs{\mathbb I^{t-s}}$ we denote the operator norm. Thus, in view of \eqref{eq_semigroup_frac_int},
 and setting $\delta=t-s>0$, we have
\be
\abs{\mathbb I^{\delta}}\leq \sup_{\norm{\vp}_{L^1}>0}\frac{\norm{\mathbb I^{\delta}\vp}_{L^1(a,b)}}{\norm{\vp}_{L^1(a,b)}}\leq (b-a)^{\delta}\frac{1}{\delta\Gamma(\delta)}=(b-a)^{\delta}\frac{1}{\Gamma(\delta+1)}\leq 1,
\ee
whenever $b-a\leq 1$. This, together with \eqref{point_norm_single}, gives
\be
\norm{d^{s}w}_{L^1(I)} \leq \norm{d^{t}w}_{L^1(I)},
\ee
as desired.\\\\
We next deal with the multi-dimensional case. We shall only write in details for $N=2$, as the case $N\geq 3$ 
is similar. Assume that $u\in BV^t(Q)\cap C^{\infty}(Q)$, and in view of Remark \ref{smooth_use_here}, we have
\be\label{real_momn_two2}
TV_{\ell^1}^t(u)=\int_Q\abs{\nabla^s u}dx=\int_0^1\int_0^1\abs{\partial^t_1 u(x)}dx_1dx_2+\int_0^1\int_0^1\abs{\partial^t_2 u(x)}dx_1dx_2.
\ee
Since $u\in C^\infty(Q)\cap BV^t(Q)$, we have, for each $x_2\in(0,1)$,
 the slice $w_{x_2}(x_1):=u(x_1,x_2)$ is well defined and belongs to $BV^t(I)$. Thus
\be
\int_0^1\abs{\partial^s_1u(x_1,x_2)}dx_1= TV_{\ell^1}^s(w_{x_2}(x_1))\leq TV_{\ell^1}^t(w_{x_2}(x_1))=\int_0^1\abs{\partial^t_1u(x_1,x_2)}dx_1.
\ee
Integrating over $x_2\in I$, we have 
\be\label{real_momn_two}
\int_0^1\int_0^1\abs{\partial^s_1 u(x)}dx_1dx_2\leq \int_0^1\int_0^1\abs{\partial^t_1 u(x)}dx_1dx_2.
\ee
Similarly we can show 
\be
\int_0^1\int_0^1\abs{\partial^s_2 u(x)}dx_1dx_2\leq \int_0^1\int_0^1\abs{\partial^t_2 u(x)}dx_1dx_2.
\ee
This, together with \eqref{real_momn_two} and \eqref{real_momn_two2}, gives
\be\label{smooth_result_tvts}
TV_{\ell^1}^s(u)\leq TV_{\ell^1}^t(u)\text{ for $u\in C^\infty(Q)\cap BV^t(Q)$}.
\ee 
Finally, assume $u\in SV_{\ell^1}^t(Q)$ only. From Definition \ref{strict_conv_BVr} we could obtain a sequence $\seqn{u_n}\subset BV^t(Q)\cap C^\infty(Q)$ such that $u_n\to u$ strongly in $L^1(Q)$ and 
\be\label{monon_approx}
TV_{\ell^1}^t(u_n)\to TV_{\ell^1}^t(u).
\ee
Then, in view of \eqref{smooth_result_tvts}, for each $u_n$ we have 
\be
TV_{\ell^1}^s(u_n)\leq TV_{\ell^1}^t(u_n).
\ee
Also, since $u_n\to u$ strongly in $L^1$, in view of Theorem \ref{weak_star_comp_s}, and together with \eqref{monon_approx}, we conclude that 
\be
TV_{\ell^1}^s(u)\leq \liminfn\, TV_{\ell^1}^s(u_n)\leq \limsup_{n\to\infty} TV_{\ell^1}^t(u_n)=TV_{\ell^1}^t(u).
\ee
In the end, take any $\vp\in C_c^\infty(Q;\R^2)$, we observe, in view of Lemma \ref{linftyds} and \eqref{scaled_divg}, that
\be
\liminf_{s\searrow 0}TV_{\ell^1}^s(u)\geq \liminf_{s\searrow 0}\int_Qu\,\divg^s\vp\,dx =\frac12\int_Q u\fsp{\vp_1+\vp_2}\,dx,
\ee
and hence, we conclude, by the arbitrariness of $\vp$, that 
\be
TV_{\ell^1}^s(u)\geq TV_{\ell^1}^0(u)= L^1(u),
\ee
which concludes the case that $s=0$, and hence the thesis.
\end{proof}
We next investigate the lower semi-continuity and compactness with respect to the
 order. 
\begin{proposition}\label{compact_lsc_s}
Given sequences $\seqn{s_n}\subset (0,1)$, $s_n\to s\in[0,1]$, and $\seqn{u_n}\subset L^1(Q)$ such that there exists $p\in(1,+\infty]$ satisfying
\be\label{sn_uniform_uppd}
\sup\flp{\norm{u_n}_{L^p}+TV^{s_n}(u_n):\,\,n\in\N}<+\infty,
\ee
then the following statements hold.
\begin{enumerate}[1.]
\item
There exists $u\in BV^s(Q)$ and, up to a subsequence, $u_n\wto u$ weakly in $L^p(Q)$, and 
\be\label{sn_uniform_lsc}
\liminf_{n\to\infty} TV^{s_n}(u_n)\geq TV^s(u).
\ee
\item
Assuming in additional that $u_n\in SV^{s_n}(Q)$ for each $n\in\N$, $s_n\to s\in(0,1]$, and $\seqn{u_n}\subset IM(Q)$, we have 
\be
u_n\to u\text{ strongly in }L^1(Q).
\ee
\end{enumerate}
\end{proposition}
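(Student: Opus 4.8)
The plan is to prove the two assertions separately: the first by weak compactness in $L^p(Q)$ together with a limiting argument in the dual formulation of $TV^r$, and the second by reducing, via the monotonicity of Proposition \ref{bergounioux2017fractional}, to the compact embedding of Theorem \ref{ATV_real_embedding}.

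For Assertion 1, since $p>1$ and $|Q|<+\infty$, the bound \eqref{sn_uniform_uppd} makes $\seqn{u_n}$ relatively weakly (weak-$\ast$ if $p=+\infty$) sequentially compact in $L^p(Q)$, so along a subsequence $u_n\wto u$ for some $u\in L^p(Q)$. To obtain \eqref{sn_uniform_lsc} I would fix an admissible test field $\vp\in C_c^\infty(Q;\rn)$ with $\abs{\vp}_{\ell^p}^\ast\le 1$ and start from $TV^{s_n}(u_n)\ge\int_Q u_n\,\divg^{s_n}\vp\,dx$. By Lemma \ref{linftyds}, Assertion 1, the family $\seqn{\divg^{s_n}\vp}$ is uniformly bounded in $L^\infty(Q)$, and by Lemma \ref{linftyds}, Assertion 2 (together with the strong continuity of the fractional integration semigroup, Theorem \ref{thm_MR1347689}, Assertion \ref{semigroup_frac_int}, to cover the endpoints $s\in\{0,1\}$), $\divg^{s_n}\vp\to\divg^{s}\vp$ pointwise a.e. on $Q$; dominated convergence on the bounded set $Q$ then upgrades this to strong convergence in $L^{p'}(Q)$ (in $L^1(Q)$ when $p=+\infty$). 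Pairing the weak convergence $u_n\wto u$ against the strong convergence of $\divg^{s_n}\vp$ gives $\int_Q u_n\,\divg^{s_n}\vp\,dx\to\int_Q u\,\divg^s\vp\,dx$, whence $\liminf_n TV^{s_n}(u_n)\ge\int_Q u\,\divg^s\vp\,dx$; taking the supremum over all such $\vp$ yields \eqref{sn_uniform_lsc} and, in particular, $TV^s(u)\le\sup_n TV^{s_n}(u_n)<+\infty$, i.e. $u\in BV^s(Q)$.

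For Assertion 2 I would first fix $\sigma\in(0,1)$ with $\sigma<s$ (possible since $s>0$), so that $s_n>\sigma$ for all $n$ large. Since $u_n\in SV^{s_n}(Q)\subseteq SV_{\ell^1}^{s_n}(Q)$ and $u_n\in IM(Q)$, Proposition \ref{bergounioux2017fractional} gives $u_n\in BV^\sigma(Q)\cap IM(Q)$ with
\[
TV_{\ell^1}^\sigma(u_n)\le TV_{\ell^1}^{s_n}(u_n)\le TV^{s_n}(u_n);
\]
combining with the norm equivalence \eqref{equ_p_1_p} and the bound \eqref{sn_uniform_uppd} we obtain $\sup_n TV^\sigma(u_n)<+\infty$, while $\sup_n\norm{u_n}_{L^1(Q)}<+\infty$ follows from the $L^p$-bound and $\sup_n\norm{T[u_n]}_{L^\infty(\partial Q)}\le 1$ from $u_n\in IM(Q)$. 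With the order now fixed at $\sigma$, Theorem \ref{ATV_real_embedding} provides a further subsequence converging strongly in $L^1(Q)$; since this subsequence still converges weakly in $L^p(Q)$ to the $u$ of Assertion 1, the strong $L^1$-limit must be $u$, which proves the claim.

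The main obstacle lies in the passage from the moving order $s_n$ to the fixed order $\sigma$ in Assertion 2: Theorem \ref{ATV_real_embedding} (and the fractional mean value estimate of Proposition \ref{one_d_cuoweixiangjian} underlying it) is formulated for sequences in $\text{ss-}BV^\sigma(Q)$, whereas the smooth-approximation structure available for $u_n$ naturally sits at the order $s_n$. One must therefore verify that this structure descends to order $\sigma$ — concretely, that the smooth approximants of $u_n$ at order $s_n$ furnished by Proposition \ref{approx_smooth}, to which the smooth-level monotonicity \eqref{smooth_result_tvts} applies, still yield a uniformly bounded $TV^\sigma$ and the required ss-$(p)$ approximation at order $\sigma$ with traces controlled by $1$. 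The one feature that keeps the estimate uniform in $n$ — and that I would emphasize — is the elementary inequality $h^{s_n}\le h^{\sigma}$ for $0<h<1$ and $n$ large, which prevents the $h^{s_n}$-modulus of continuity of \eqref{dengchaxiangjian_p1} from degenerating as $s_n$ varies; without it the translation estimate feeding Theorem \ref{lp_compactness} would not be equicontinuous. By comparison the endpoint behavior of $\divg^{s_n}\vp$ as $s_n\to 0^+$ or $s_n\to 1^-$ in Assertion 1 is a minor point, already dispatched by Lemma \ref{linftyds}.
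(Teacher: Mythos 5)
Your proposal is correct and follows essentially the same route as the paper: Assertion 1 via weak $L^p$-compactness combined with the uniform bound and pointwise convergence of $\divg^{s_n}\vp$ from Lemma \ref{linftyds} (upgraded to strong $L^{p'}$-convergence by dominated convergence), and Assertion 2 by using the monotonicity of Proposition \ref{bergounioux2017fractional} to pass to a fixed order $\sigma<s$ (the paper uses $s_N/2$) and then invoking the compact embedding of Theorem \ref{ATV_real_embedding}. Your closing discussion of the ss-regularity needed to apply Theorem \ref{ATV_real_embedding} at the fixed order is a fair point that the paper's own proof passes over silently, but it does not change the argument.
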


\begin{proof}
Let $\seqn{u_n}\subset L^1(Q)$ be given, such that \eqref{sn_uniform_uppd} holds. Since $\norm{u_n}_{L^p(Q)}$ (with $p>1$) is uniformly bounded, 
there exists $u\in L^p(Q)$ such that, upon subsequence, 
\be\label{need_dunford_pettis}
u_n\wto u\text{ weakly in }L^p(Q).
\ee
We now prove Statement 1. In view of Lemma \ref{linftyds} we get that for any $\vp\in C_c^\infty(Q)$ and $s\in[0,1]$, it holds $\divg^s\vp\in L^{p'}(Q)$ (where $1/p +1/{p'}=1$) 
and hence, by the weak $L^p(Q)$-convergence in \eqref{need_dunford_pettis}, we have that 
\be
\limn \int_Q u_n\,\divg^s\vp\,dx= \int_Q u\,\divg^s\vp\,dx.
\ee
Statement 2 of Lemma \ref{linftyds} gives $\divg^{s_n}\vp\to \divg^s\vp$ a.e., while
Statement 1 of Lemma \ref{linftyds} gives
\[\sup_n \|\divg^{s_n}\vp\|_{L^\infty(Q)}<+\infty.\]
Thus, by dominated convergence theorem, $\divg^{s_n}\vp\to \divg^s\vp$ strongly
in $L^{p'}(Q)$.
By H\"older inequality, 
\begin{align*}
\limsup_{n\to\infty}\int_Q \abs{u_n}\abs{\divg^{s_n}\vp-\divg^s\vp}dx
\leq \sup_{n\in\N}\norm{u_n}_{L^p(Q)} \limsup_{n\to\infty} \norm{\divg^{s_n}\vp-\divg^s\vp}_{L^{p'}(Q)}=0.
\end{align*}
Therefore, we have
\begin{align}
&\limn \int_Q u_n \,\divg^{s_n}\vp\,dx\notag\\
&\geq\liminfn \int_Q u_n\,\divg^{s}\vp\,dx+\liminfn \int_Q u_n\fmp{\divg^{s_n}\vp-\divg^s\vp}dx\notag\\
&=\int_Q u\,\divg^s\vp\,dx,
\label{need_for_tv_case}
\end{align}
and hence
\be
\liminfn\, TV^{s_n}(u_n)\geq \liminfn\,\int_Q u_n\,\divg^{s_n}\vp\,dx 
=\limn \int_Q u_n\,\divg^{s_n}\vp\,dx= \int_Q u\,\divg^s\vp\,dx,
\ee
and \eqref{sn_uniform_lsc} follows by the arbitrariness of $\vp\in C_c^\infty(Q;\rn)$.\\\\
We next prove Statement 2. By Remark \ref{an_iso_equ} and \eqref{sn_uniform_uppd} we have that 
\be
\sup\flp{\norm{u_n}_{L^p}+TV_{\ell^1}^{s_n}(u_n):\,\,n\in\N}<+\infty,
\ee
Since $s_n\to s\in(0,1]$, in view of Proposition \ref{bergounioux2017fractional}, 
there exist a (sufficiently large) $N\in\N$ and $s_N<1$, such that 
\be
TV_{\ell^1}^{s_N/2}(u_n)\leq TV_{\ell^1}^{s_n}(u_n)+1.\qquad \text{for all }n\ge N.
\ee
Combined with the assumption that $\seqn{u_n}\subset IM(Q)$ allows us to use Theorem \ref{ATV_real_embedding} to infer Statement 2.
\end{proof}
The next theorem provides a connection between the lower order $TV^s$ to the higher order $TV^r$, where we recall again that $r=\ir+s$.
\begin{theorem}\label{liangbiankongzhizhongjian}
Let $r=\ir+s$ be given. There exists a constant $C_r>0$ such that 
\be
TV^{s}(u)\leq C_r\fmp{\norm{u}_{L^1(Q)}+TV^{r}(u)},
\ee
for all $u\in SV^r(Q)$.
\end{theorem}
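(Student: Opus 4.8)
The plan is: reduce to one dimension by slicing, prove the one-dimensional estimate using the $AC^{r,1}$-representation of functions admitting a fractional derivative (Theorems \ref{thm_MR1347689} and \ref{thm_MR3144452}) together with the operator bound \eqref{eq_semigroup_frac_int}, and finally pass from smooth functions to all of $SV^r(Q)$ by density and lower semicontinuity.

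First the trivial reductions: if $\ir=0$, i.e. $r=s$, there is nothing to prove; if $s=0$ then $r\in\N$ and $TV^s=TV^0=\norm{\cdot}_{L^1(Q)}$ by Proposition \ref{bergounioux2017fractional}, so again the bound is immediate. Assume therefore $\ir\geq 1$ and $s\in(0,1)$. By Proposition \ref{approx_smooth}, every $u\in SV^r(Q)$ admits $\seqn{u_n}\subset C^\infty(\bar Q)\cap BV^r(Q)$ with $u_n\to u$ in $L^1(Q)$ and $TV^r(u_n)\to TV^r(u)$; since $L^1$-convergent sequences are uniformly integrable and, along a subsequence, converge a.e., Theorem \ref{weak_star_comp_s} (condition (1)) gives $TV^s(u)\leq\liminf_n TV^s(u_n)$. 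Hence it suffices to prove the estimate for $u\in C^\infty(\bar Q)\cap BV^r(Q)$. Finally, by Remark \ref{an_iso_equ} we may replace $TV$ by $TV_{\ell^1}$ throughout, and by Remark \ref{smooth_use_here} both $TV^{s}_{\ell^1}(u)$ and $TV^r_{\ell^1}(u)$ of a smooth $u$ split into one-dimensional directional pieces; thus everything reduces to the one-dimensional claim: for $w\in C^\infty(\bar I)\cap BV^r(I)$, $I=(0,1)$,
\be\label{plan_1d}
TV^s(w,I)\leq C_r\bigl(\norm{w}_{L^1(I)}+TV^r(w,I)\bigr),
\ee
which is then integrated slice by slice over the remaining variables (using $\sum_{i=1}^N\int_Q\abs{\partial^r_i u}\,dx\leq TV^r_{\ell^1}(u)$ and Remark \ref{an_iso_equ} to return to $TV^r$, $TV^s$).

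For \eqref{plan_1d}: a smooth $w\in BV^r(I)$ belongs to $W^{r,1}(I)$ (for smooth functions $TV^r(w,I)=\norm{d^r w}_{L^1(I)}$), hence by Theorem \ref{thm_MR3144452} it can be written as
\bes
w(t)=P(t)+\mathbb I^r\phi(t),\qquad P(t)=\sum_{i=0}^{\ir}\frac{c_i}{\Gamma(s+i)}\,t^{s-1+i},\qquad \phi=d^r w,\quad \norm{\phi}_{L^1(I)}=TV^r(w,I).
\ees
From \eqref{eq_semigroup_frac_int} on $(0,1)$ we get $\norm{\mathbb I^r\phi}_{L^1(I)}\leq \Gamma(r+1)^{-1}\norm{\phi}_{L^1(I)}$, so $\norm{P}_{L^1(I)}\leq\norm{w}_{L^1(I)}+\Gamma(r+1)^{-1}TV^r(w,I)$; since the $\ir+1$ powers $t^{s-1+i}$ are linearly independent in $L^1(I)$ (distinct exponents, all $>-1$), the coefficient norm is equivalent to $\norm{\cdot}_{L^1(I)}$ on their span, giving $\sum_{i=0}^{\ir}\abs{c_i}\leq A_r\norm{P}_{L^1(I)}$ with $A_r$ depending only on $r$. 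Now apply $d^s_L$ by linearity, using $d^s_L t^{s-1}=0$ (Lemma \ref{power_function_s}(3)), $d^s_L t^{s-1+i}=\frac{\Gamma(s+i)}{\Gamma(i)}t^{i-1}$ for $1\leq i\leq\ir$ (the non-integer power rule, equivalently $\mathbb I^{-s}[t^{s-1+i}/\Gamma(s+i)]=t^{i-1}/\Gamma(i)$), and $d^s_L\mathbb I^r\phi=\mathbb I^{-s}\mathbb I^r\phi=\mathbb I^{\ir}\phi$ (Theorem \ref{thm_MR1347689}(2), case (a), the sum of orders being $\ir>0$, $\phi\in L^1(I)$), to obtain
\bes
d^s_L w=\sum_{i=1}^{\ir}\frac{c_i}{\Gamma(i)}\,t^{i-1}+\mathbb I^{\ir}\phi.
\ees
Using $TV^s(w,I)=\norm{d^s_L w}_{L^1(I)}$, $\norm{t^{i-1}}_{L^1(I)}=1/i$, and \eqref{eq_semigroup_frac_int} once more for $\mathbb I^{\ir}\phi$, this yields $TV^s(w,I)\leq \sum_{i=1}^{\ir}\abs{c_i}+\Gamma(\ir+1)^{-1}TV^r(w,I)$, and combining with the bounds on $\sum\abs{c_i}$ and $\norm{P}_{L^1(I)}$ gives \eqref{plan_1d}.

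The main obstacle is the coordination of the reduction steps with the boundary singularity bookkeeping: one must make sure that a smooth function in $BV^r$ genuinely lies in $W^{r,1}$, so that the $AC^{r,1}$-representation is available with $\phi=d^rw\in L^1(I)$ and $\norm{\phi}_{L^1}=TV^r(w,I)$, and that the multidimensional $TV^r_{\ell^1}$ really dominates the sum of one-dimensional directional $r$-variations so that the slicing closes up. The algebraic heart is the identity $d^s_L w=(\text{polynomial of degree}\leq\ir-1)+\mathbb I^{\ir}\phi$: it is what produces the clean estimate, and it is precisely the surviving low-order modes $c_1,\dots,c_{\ir}$ appearing there that force the presence of the $\norm{u}_{L^1(Q)}$ term on the right-hand side.
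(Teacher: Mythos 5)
Your proof is correct, and it shares the paper's overall architecture (reduction to dimension one by slicing via Remark \ref{smooth_use_here}, the $AC^{r,1}$ representation $w=\sum_i c_i t^{s-1+i}/\Gamma(s+i)+\mathbb I^r\phi$ from Theorem \ref{thm_MR3144452}, the semigroup bound \eqref{eq_semigroup_frac_int}, the identity $d^s_L w=(\text{polynomial})+\mathbb I^{\ir}\phi$, and the final passage from $C^\infty$ to $SV^r$ by Proposition \ref{approx_smooth} plus Theorem \ref{weak_star_comp_s}), but the one-dimensional core is argued differently. The paper's Lemma \ref{1d_approx_up} proceeds by contradiction: it normalizes $TV^s(w_n)=1$, and controls the coefficients via bounds of the form $\abs{c_{0,n}}\leq\frac{1}{\Gamma(2-s)}\norm{w_n}_{L^1}+\norm{d^s w_n}_{L^1}$ obtained from mean-value arguments on $\mathbb I^{1-s}w_n$; since these bounds involve $\norm{d^sw_n}_{L^1}=TV^s(w_n)$ itself, the estimate cannot be closed directly and the contradiction framework is needed. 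You instead bound the coefficients by $\sum_i\abs{c_i}\leq A_r\norm{P}_{L^1(I)}\leq A_r\bigl(\norm{w}_{L^1(I)}+\Gamma(r+1)^{-1}TV^r(w,I)\bigr)$ using equivalence of norms on the finite-dimensional span of the powers $t^{s-1+i}$ — a bound that does not involve $TV^s(w)$ at all — which yields a direct, quantitative proof with an explicit constant. This is arguably cleaner for the statement at hand; what it does not immediately give (and what the paper's companion result, Proposition \ref{zhendexiao_le}, addresses separately) is uniformity of the constant as $s$ varies, since your $A_r$ comes from a soft norm-equivalence argument and a priori degenerates as $s\to 0^+$ or $s\to 1^-$, where the powers $t^{s-1+i}$ approach linear dependence with integer powers. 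Also note the paper's lemma proves the slightly stronger bound on $\sum_{l=0}^{\ir-1}TV^{s+l}(w)$, controlling all intermediate orders, whereas you control only $TV^s$; that is all the stated theorem requires, so this is not a gap.
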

Before we prove Theorem \ref{liangbiankongzhizhongjian}, we first prove an one dimension version.
\begin{lemma}\label{1d_approx_up}
Let $I=(0,1)$ and $r=\ir+s$ be given. There exists a constant $C_r>0$ such that 
\be
\sum_{l=0}^{\ir-1}TV^{s+l}(w)\leq C_r\fmp{\norm{w}_{L^1(I)}+TV^{r}(w)},
\ee
for all $w\in SV^r(I)$.
\end{lemma}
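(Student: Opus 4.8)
The plan is to reduce to smooth functions, then split $w$ into a fractionally representable part and a finite-dimensional ``boundary-layer'' part. Observe first that if $\ir=0$ the left-hand side is an empty sum, so one may assume $\ir\ge 1$ throughout.

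\emph{Step 1: reduction to $w\in C^\infty(\bar I)\cap BV^r(I)$.} Given $w\in SV^r(I)$, Proposition \ref{approx_smooth} (with $Q=I$) yields $\seqn{w_n}\subset C^\infty(\bar I)\cap BV^r(I)$ with $w_n\to w$ in $L^1(I)$ and $TV^r(w_n)\to TV^r(w)$. Since $L^1$-convergence implies $w_n\wtos w$ in $\mb(I)$, Theorem \ref{weak_star_comp_s} gives $TV^{s+l}(w)\le\liminfn TV^{s+l}(w_n)$ for each fixed $l$. Hence it is enough to prove the estimate, with a constant $C_r$ depending only on $r$, for smooth $w$, and then pass to the limit on both sides along a subsequence for which all the finitely many sequences $\flp{TV^{s+l}(w_n)}_n$ converge.

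\emph{Step 2: decomposition of a smooth $w$.} For $w\in C^\infty(\bar I)\cap BV^r(I)$ one has $\phi:=d^rw\in L^1(I)$ with $\norm{\phi}_{L^1(I)}=TV^r(w)$ (Remark \ref{int_iso_equ}), and integration by parts shows $w\in W^{r,1}(I)=AC^{r,1}(I)\cap L^1(I)$; thus $w$ admits the representation \eqref{repre_AC}, $w=\sum_{i=0}^{\ir}\tfrac{c_i}{\Gamma(s+i)}t^{s-1+i}+\mathbb I^r\phi$, with this same $\phi$. Because $\ir\ge1$, $\mathbb I^r\phi=\mathbb I^{\ir}(\mathbb I^s\phi)$ is absolutely continuous, hence bounded near $t=0$; since $w$ is bounded as well, $p:=w-\mathbb I^r\phi$ is bounded near $0$, which forces $c_0=0$ ($s-1$ being the only negative exponent among the $t^{s-1+i}$). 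So $p$ lies in the fixed $\ir$-dimensional space $V:=\operatorname{span}\flp{t^{s-1+i}:i=1,\dots,\ir}\subset L^1(I)$, and $\norm{p}_{L^1(I)}\le\norm{w}_{L^1(I)}+\tfrac1{\Gamma(r+1)}TV^r(w)$ by \eqref{eq_semigroup_frac_int}.

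\emph{Step 3: estimating $d^{s+l}w$ piecewise.} Fix $l\in\flp{0,\dots,\ir-1\}$. On the representable part, the semigroup relation (Theorem \ref{thm_MR1347689}, Assertion \ref{MR1347689T2_5}, with $r_2=r>0$ and $r_1+r_2=\ir-l>0$) gives $d^{s+l}(\mathbb I^r\phi)=\mathbb I^{\ir-l}\phi$, so by \eqref{eq_semigroup_frac_int} on $(0,1)$,
\[
\norm{d^{s+l}(\mathbb I^r\phi)}_{L^1(I)}\le\tfrac1{\Gamma(\ir-l+1)}\norm{\phi}_{L^1(I)}=\tfrac1{(\ir-l)!}\,TV^r(w).
\]
On the boundary-layer part, $p\mapsto d^{s+l}p$ is a linear map from the finite-dimensional space $V$ into $L^1(I)$ — explicitly $d^{s+l}t^{s-1+i}=\tfrac{\Gamma(s+i)}{\Gamma(i-l)}t^{i-l-1}$, which vanishes for $i\le l$ and is an integrable monomial for $i>l$ — hence bounded by some $K_r>0$ independent of $w$. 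Since $TV^{s+l}(w)=\norm{d^{s+l}w}_{L^1(I)}$ for smooth $w$, adding these two bounds over $l=0,\dots,\ir-1$ and inserting the bound on $\norm{p}_{L^1(I)}$ from Step 2 yields the claim with, say, $C_r=(e-1)+\ir K_r\bigl(1+\tfrac1{\Gamma(r+1)}\bigr)$.

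\emph{Main obstacle.} The delicate point is Step 2: verifying that a smooth $w$ with $d^rw\in L^1(I)$ genuinely splits as $\mathbb I^r(d^rw)$ plus a member of the fixed finite-dimensional space $V$, and in particular that the most singular coefficient $c_0$ vanishes. This is precisely where the regularity $w\in C^\infty(\bar I)$ (through boundedness near the endpoint) is used, and it is the reason the lemma is stated for $SV^r$ rather than for all of $BV^r$; the remaining steps are routine manipulations with the semigroup property and the norm estimate \eqref{eq_semigroup_frac_int}.
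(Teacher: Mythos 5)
Your proof is correct, and it closes the estimate by a genuinely different route than the paper. Both arguments rest on the same two pillars --- the reduction to $w\in C^\infty(\bar I)\cap BV^r(I)$ via Proposition \ref{approx_smooth} plus lower semicontinuity, and the representation $w=\sum_i\frac{c_i}{\Gamma(s+i)}t^{s-1+i}+\mathbb I^r\phi$ with $\phi=d^rw$ from Theorem \ref{thm_MR3144452}, together with the same formulas for $d^{s+l}$ acting on each piece --- but the paper finishes by contradiction: it normalizes $TV^{s}(w_n)=1$ and $\norm{w_n}_{L^1}+TV^{r}(w_n)\to 0$, bounds each coefficient $c_{i,n}$ individually through trace-type identities such as $c_{0,n}=(\mathbb I^{1-s}w_n)(0)$ (bounds which themselves involve $TV^{s+l}(w_n)$, which is exactly why a direct estimate along that route would be circular and a normalization is needed), and derives $TV^s(w_n)\to 0$. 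You instead control the entire polynomial part $p=w-\mathbb I^r\phi$ at once, via $\norm{p}_{L^1(I)}\le\norm{w}_{L^1(I)}+\Gamma(r+1)^{-1}TV^r(w)$ from \eqref{eq_semigroup_frac_int}, and then let the boundedness of the linear map $d^{s+l}$ on the finite-dimensional space $V$ (equivalence of norms) do the rest; this avoids the circularity, gives a direct proof, and produces an explicit constant. Your observation that $c_0=0$ for $w\in C^\infty(\bar I)$ (boundedness near $t=0$) is a clean extra the paper does not use, though it is not strictly needed: $d^{s+l}t^{s-1}=0$ and $t^{s-1}\in L^1(I)$, so you could equally work with the $(\ir+1)$-dimensional span. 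The one thing your argument does not give for free is a constant independent of $r$ (your $K_r$ is an operator norm on an $s$-dependent space), but that uniformity is not claimed here and is treated separately in Proposition \ref{zhendexiao_le}.
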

\begin{proof}
We deal with the case $\ir=1$ first. That is, we have $r=1+s$.\\\\
Assume no such $C_r$ exists, i.e., there exists a sequence $\seqn{w_n}\subset SV^{1+s}$ such that, for each $n\in\N$,
\be
TV^s(w_n)=1\text{ and }\norm{w_n}_{L^1(I)}+TV^{1+s}(w_n)<1/n.
\ee
In view of Theorem \ref{approx_smooth}, we may as well assume that $\seqn{w_n}\subset C^\infty(I)\cap BV^{1+s}(I)$ and we may write
\be\label{where_contra}
\norm{d^sw_n}_{L^1(I)}\geq1/2\text{ and }\norm{w_n}_{L^1(I)}+\norm{d^{1+s}w_n}_{L^1(I)}<2/n.
\ee
Thus, we have that $\seqn{w_n}\subset W^{1+s}(I)$ and 
\be\label{strong_zero}
w_n\to 0\text{ and }d^{1+s}w_n\to 0\text{ strongly in }L^1(I).
\ee
Recall from Theorem \ref{thm_MR3144452}, we may write that, for each $w_n$, 
\be
w_n(t)= \frac{c_{0,n}}{\Gamma(s)}t^{s-1}+ \frac{c_{1,n}}{\Gamma(s+1)}t^{s}+\mathbb I^{1+s} \phi_n(t),\,\, t\in I\,\,a.e.,
\ee
and 
\be
d^{1+s} w_n(t)=\phi_n(t).
\ee
Thus, in view of \eqref{strong_zero} we have 
\be
\phi_n\to 0\text{ strongly in }L^1(I),
\ee
and together with \eqref{eq_semigroup_frac_int}, we have, for any $0\leq s'\leq s$,
\be\label{shangyidian0}
\norm{\mathbb I^{1+s'}\phi_n(t)}_{L^1(I)}\leq \frac1{\Gamma(2+s')}\norm{\phi_n}_{L^1(I)}\to 0.
\ee
We next claim that 
\be\label{shangyidian1}
c_{0,n}\to 0\text{ and }c_{1,n}\to 0.
\ee
By the mean value theorem we have
\be
(\mathbb I^{1-s}w)(t_0)=\int_I (\mathbb I^{1-s}w)(l)dl
\ee
Since $w\in W^{1+s}(I)$, we have $(\mathbb I^{1-s}w)$ is absolutely continuous, hence
\be
(\mathbb I^{1-s}w)(t)=(\mathbb I^{1-s}w)(t_0)+\int_{t_0}^t d^1(\mathbb I^{1-s}w)(l)dl = (\mathbb I^{1-s}w)(t_0)+\int_{t_0}^t (d^sw)(l)dl.
\ee
Thus, for any $t\in[0,1]$,
\be
\abs{(\mathbb I^{1-s}w)(t)}\leq \norm{\mathbb I^{1-s}w}_{L^1(I)}+\norm{d^s u}_{L^1}\leq \frac{1}{\Gamma(2-s)}\norm{w}_{L^1(I)}+\norm{d^s w}_{L^1(I)},
\ee
where at the last inequality we used \eqref{eq_semigroup_frac_int}. This, and together with \eqref{d_representable}, we obtain that 
\be
\abs{c_{0,n}}=\abs{(\mathbb I^{1-s}w_n)(0)}\leq \frac{1}{\Gamma(2-s)}\norm{w_n}_{L^1(I)}+\norm{d^s w_n}_{L^1}.
\ee
Similarly, we may show that 
\be
\abs{c_{1,n}}=\abs{(d(\mathbb I^{1-s})w_n)(0)}\leq \norm{d^sw_n}_{L^1(I)}+\norm{d^{1+s} w_n}_{L^1}.
\ee
Thus, in view of \eqref{where_contra}, we have 
\be\label{const_upper_bdd}
\sup\flp{\abs{c_{0,n}}+\abs{c_{1,n}}:\,\, n\in\N}<+\infty.
\ee
We also notice that 
\begin{align}
&\norm{\frac{c_{0,n}}{\Gamma(s)}t^{s-1}+ \frac{c_{1,n}}{\Gamma(s+1)}t^{s}}_{L^1(I)} \\&=\norm{\frac{c_{0,n}}{\Gamma(s)}t^{s-1}+ \frac{c_{1,n}}{\Gamma(s+1)}t^{s}+\mathbb I^{1+s}\phi_n(t)-\mathbb I^{1+s}\phi_n(t)}_{L^1(I)}\notag\\
&=\norm{w_n-\mathbb I^{1+s}\phi_n(t)}_{L^1(I)}\notag\\
&\leq \norm{w_n}_{L^1(I)}+\norm{\mathbb I^{1+s}\phi_n(t)}_{L^1(I)}\to 0,
\label{jiajiajia_l1}
\end{align}
which, combined with \eqref{const_upper_bdd}, implies 
\[c_{0,n}\to 0\text{ and }c_{1,n}\to 0,\]
and hence \eqref{shangyidian1}.
Then, in view of Lemma \ref{power_function_s}, we have
\be
d^s t^{s-1}=0 \text{ and }d^s t^s = \Gamma(s+1).
\ee
Next, in view of Definition \ref{frac_represent_I} and Theorem \ref{thm_MR1347689}, Assertion \ref{cite_represent_frac}, 
we have $\phi\in \mathbb I^s(L^1(I))$. Combined with  Statement 2 of Theorem \ref{thm_MR1347689}, Assertion \ref{MR1347689T2_5}, we obtain 
\be
d^s \mathbb I^{1+s}\phi_n(t)=\mathbb I^1\phi_n(t).
\ee
Thus, we have
\be
d^s w_n = c_{1,n}+\mathbb I^1\phi_n(t),
\ee
and together with \eqref{shangyidian0} and \eqref{shangyidian1}, we conclude that 
\be\label{shangyidian2}
\limsup_{n\to 0}\norm{d^{s} w_n}_{L^1(I)}\leq \abs{c_{1,n}}+\norm{\mathbb I^1\phi_n}_{L^1(I)}\to 0,
\ee
which contradicts \eqref{where_contra}.\\\\
Now we consider the general case $\ir\in\N$. In view of \eqref{repre_AC}, we write 
\be
w_n(t)=\sum_{l=0}^\ir \frac{c_{l,n}}{\Gamma(s+l)}t^{s-1+l}+\mathbb I^r\phi_n(t),\,\, t\in I\,\,a.e..
\ee
Similarly to \eqref{jiajiajia_l1}, we may show that  
\be
\abs{c_{0,n}}\leq \frac{1}{\Gamma(2-s)}\norm{w_n}_{L^1(I)}+\norm{d^s w_n}_{L^1(I)}
\ee
and
\be
\abs{c_{l,n}}\leq \norm{d^{s+l-1}w_n}_{L^1(I)}+\norm{d^{s+l} w_n}_{L^1(I)},
\ee
for $l=1,\ldots, \ir-1$, and further deduce that
\be
c_{l,n}\to 0\text{ for }i=0,\ldots, \ir.
\ee
Note that, for each $i=0,\ldots, \ir-1$,
\begin{enumerate}[1.]
\item
$d^{l+s} t^{s-1+j}=0$, for $j=0,\ldots, l$;
\item
$d^{l+s} t^{l+s} = \Gamma(s+1)$;
\item
$d^{l+s} t^{l+s+j} = \frac{\Gamma(l+s+j+1)}{\Gamma(j+1)}t^j$, for $j=1,\ldots,\ir-1-l$,
\item
$d^{l+s}(\mathbb I^r[\phi_n](t))=\mathbb I^{\ir-l}[\phi_n](t)$, a.e., $t\in I$.
\end{enumerate}
Therefore, we obtain that 
\be
\limsup_{n\to 0}\norm{d^{l+s} w_n}_{L^1(I)}\leq \sum_{j=0}^{\ir-1-l}\abs{c_{l+s+j,n}}+\norm{\mathbb I^{\ir-l}[\phi_n]}_{L^1(I)}\to 0,
\ee
which is a contradiction, and hence we conclude our thesis.
\end{proof}

\begin{proof}[Proof of Theorem \ref{liangbiankongzhizhongjian}]
We again only deal with the case $N=2$, as the case $N\geq 3$ is similar. We first show that 
\be\label{new_ATV_conclude}
TV_{\ell^1}^{s}(u)\leq C_r\fmp{\norm{u}_{L^1(Q)}+TV_{\ell^1}^{r}(u)}.
\ee
We assume for a moment that $u\in BV^r(Q)\cap C^\infty(Q)$. We start with the case $\ir=1$. That is, $r=1+s$. In view of Remark \ref{smooth_use_here}, we have
\be\label{eq_s_order}
TV_{\ell^1}^s(u)=\int_Q\abs{\partial_1^su}dx+\int_Q\abs{\partial_2^su}dx
\ee
and
\be\label{eq_1s_order}
TV_{\ell^1}^{1+s}(u)=\int_Q\abs{\partial_1^{1+s}u}dx+\int_Q\abs{\partial_2^{1+s}u}dx+\int_Q\abs{\partial_1^{s}\partial_2u}dx+\int_Q\abs{\partial_2^{s}\partial_1u}dx.
\ee
Let $w(t):=u(t,x_2)$, with a fixed $x_2\in I$. Then, by Lemma \ref{1d_approx_up}, we have
\be
TV_{\ell^1}^s(w(t))\leq C_r\fmp{\norm{w(t)}_{L^1(I)}+TV_{\ell^1}^{1+s}(w(t))}. 
\ee
That is, 
\be
\int_0^1\abs{\partial_1^su(x_1,x_2)}dx_1\leq C_r\fmp{\int_0^1 \abs{u(x_1,x_2)}dx_1+\int_0^1\abs{\partial_1^{1+s}u(x_1,x_2)}dx_1},
\ee
and hence
\begin{align*}
\int_Q\abs{\partial_1^su}dx &= \int_0^1\int_0^1\abs{\partial_1^su(x_1,x_2)}dx_1dx_2\\
&\leq C_r\fmp{\int_0^1\int_0^1 \abs{u(x_1,x_2)}dx_1dx_2+\int_0^1\int_0^1\abs{\partial_1^{1+s}u(x_1,x_2)}dx_1dx_2} \\
&=C_r\fmp{ \norm{u}_{L^1(Q)}+\int_Q \abs{\partial_1^{1+s}u}dx}.
\end{align*}
We may analogously prove 
\be
\int_Q\abs{\partial_2^su}dx \leq C_r\fmp{ \norm{u}_{L^1(Q)}+\int_Q \abs{\partial_2^{1+s}u}dx},
\ee 
and, in view of \eqref{eq_s_order} and \eqref{eq_1s_order},
\be\label{eq_1s_order_smooth}
TV_{\ell^1}^s(u)\leq C_r\fmp{\norm{u}_{L^1(Q)}+TV_{\ell^1}^{1+s}(u)},\text{ for each $u\in C^\infty(Q)\cap BV^r(Q)$. }
\ee
To conclude, we take an approximating sequence $\seqn{u_n}\subset C^\infty(Q)\cap BV^{1+s}(Q)$ such that $u_n\to u$ strongly in 
$L^1(Q)$ and $TV_{\ell^1}^{1+s}(u_n)\to TV_{\ell^1}^{1+s}(u)$. The former implies that 
\be
\liminfn TV_{\ell^1}^s(u_n)\geq TV_{\ell^1}^s(u),
\ee
and together with \eqref{eq_1s_order_smooth} we conclude that, for $u\in SV^{1+s}(Q)$,
\begin{align}
TV_{\ell^1}^s(u)&\leq \liminfn TV_{\ell^1}^s(u_n)\leq \liminf_{n\to\infty}2C_r\fmp{\norm{u_n}_{L^1(Q)}+TV_{\ell^1}^{1+s}(u_n)} \\
&\leq \limsup_{n\to\infty}2C_r\fmp{\norm{u_n}_{L^1(Q)}+TV_{\ell^1}^{1+s}(u_n)} = 2C_r\fmp{\norm{u}_{L^1(Q)}+TV_{\ell^1}^{1+s}(u)},
\end{align}
as desired.\\\\
Now we assume $\ir=2$. Similarly to the case $\ir=1$, we observe that 
\be
\int_Q\abs{\partial_1^su}dx \leq C_r\fmp{ \norm{u}_{L^1(Q)}+\int_Q \abs{\partial_1^{2+s}u}dx},
\ee
and
\be
\int_Q\abs{\partial_2^su}dx \leq C_r\fmp{ \norm{u}_{L^1(Q)}+\int_Q \abs{\partial_2^{2+s}u}dx}.
\ee
That is, we conclude that
\begin{align*}
&TV_{\ell^1}^s(u)\leq 2C_r\fmp{ \norm{u}_{L^1(Q)}+\int_Q \abs{\partial_1^{2+s}u}dx+\int_Q \abs{\partial_2^{2+s}u}dx}\\
&\leq 2C_r\fmp{ \norm{u}_{L^1(Q)}+\int_Q \abs{\partial_1^{2+s}u}dx+\int_Q \abs{\partial_2^{2+s}u}dx+\int_Q \abs{\partial_1^{1+s}\partial_2u}dx+\int_Q \abs{\partial_2^{1+s}\partial_1u}dx}\\
&=2C_r\fmp{\norm{u}_{L^1(Q)}+TV_{\ell^1}^r(u)}.
\end{align*}
For the general case that $\ir\in \N$, we shall always have, in view of Lemma \ref{1d_approx_up}, that 
\be
TV_{\ell^1}^s(u)\leq 2C_r\fmp{ \norm{u}_{L^1(Q)}+\int_Q \abs{\partial_1^{\ir+s}u}dx+\int_Q \abs{\partial_2^{\ir+s}u}dx},
\ee
and we conclude \eqref{new_ATV_conclude} as the right hand side is bounded by $\norm{u}_{L^1(Q)}+TV_{\ell^1}^r(u)$. Finally, we conclude our thesis by invoking again the equivalent condition \eqref{equ_p_1_p}.
\end{proof}
We close Section \ref{subsub_monotone} by proving that the constant $C_r$ from
Theorem \ref{liangbiankongzhizhongjian} can be taken independent of $r$. 
\begin{proposition}\label{zhendexiao_le}
Let $k\in\N$ be given. Then there exists $C>0$ such that for all $s\in(0,1)$,
\be
TV^{s}(u)\leq C\fmp{\norm{u}_{L^1(Q)}+TV^{k+s}(u)}
\ee
for  $u\in SV^{k+s}(Q)$.
\end{proposition}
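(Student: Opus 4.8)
The plan is to re-examine the proofs of Lemma \ref{1d_approx_up} and Theorem \ref{liangbiankongzhizhongjian} and to verify that every constant produced there can be bounded uniformly for $s\in(0,1)$. By Remark \ref{an_iso_equ} the asserted inequality for $TV=TV_{\ell^2}$ is equivalent to the one for $TV_{\ell^1}$, so I work with $TV_{\ell^1}$ throughout; then, exactly as in the proof of Theorem \ref{liangbiankongzhizhongjian} --- using the slicing identity of Remark \ref{smooth_use_here}, Proposition \ref{approx_smooth}, and the lower semi-continuity of $TV^{s}$ from Theorem \ref{weak_star_comp_s} --- the claim follows once one establishes the one-dimensional statement: \emph{there is $C=C(k)>0$ such that}
\be
\sum_{l=0}^{k-1}TV^{s+l}(w)\le C\fmp{\norm{w}_{L^1(I)}+TV^{k+s}(w)}\qquad\text{for all }s\in(0,1),\ w\in C^\infty(I)\cap BV^{k+s}(I).
\ee
Indeed the slicing step only integrates this inequality over the remaining variables, so independence of the constant from $s$ (and from the slice) transfers to dimension $N$.

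To prove the displayed one-dimensional inequality I would argue by contradiction, letting the order vary together with the function. If it fails for the given $k$, then after a routine approximation via Proposition \ref{approx_smooth} and a normalization there are $s_n\in(0,1)$ and $w_n\in C^\infty(I)\cap BV^{k+s_n}(I)$ with
\be
\sum_{l=0}^{k-1}TV^{s_n+l}(w_n)=1,\qquad \norm{w_n}_{L^1(I)}+TV^{k+s_n}(w_n)<\tfrac1n,
\ee
and, up to a subsequence, $s_n\to s\in[0,1]$. Writing $w_n=\sum_{l=0}^{k}\Gamma(s_n+l)^{-1}c_{l,n}\,t^{s_n-1+l}+\mathbb I^{k+s_n}\phi_n$ via Theorem \ref{thm_MR3144452}, with $\phi_n=d^{k+s_n}w_n$ and $\norm{\phi_n}_{L^1(I)}=TV^{k+s_n}(w_n)\to0$, the key observation is that the arguments of all $\Gamma$-functions occurring in this representation, in the semigroup estimate \eqref{eq_semigroup_frac_int}, and in the differentiation formulas of Lemma \ref{power_function_s} --- namely $s_n$, $1-s_n$, $2-s_n$, the numbers $s_n+l$ for $1\le l\le k$, and the fixed integers $k-m+1$ --- range over compact subsets of $(0,+\infty)$ that avoid the poles of $\Gamma$, so every structural constant below is bounded uniformly in $n$. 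In particular $\norm{\mathbb I^{k+s_n}\phi_n}_{L^1(I)}\le\Gamma(k+s_n+1)^{-1}\norm{\phi_n}_{L^1(I)}\to0$, and the coefficient estimates from the proof of Lemma \ref{1d_approx_up} (costing only factors $\Gamma(2-s_n)^{-1}$ and the quantities $\norm{d^{s_n+l}w_n}_{L^1(I)}=TV^{s_n+l}(w_n)\le1$) give $\sup_n\sum_{l=0}^{k}\abs{c_{l,n}}<+\infty$.

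Next I would show $c_{l,n}\to0$. From $\norm{w_n}_{L^1(I)}\to0$ and $\norm{\mathbb I^{k+s_n}\phi_n}_{L^1(I)}\to0$ one obtains $\norm{\sum_{l=0}^{k}\Gamma(s_n+l)^{-1}c_{l,n}\,t^{s_n-1+l}}_{L^1(I)}\to0$. On any $[\varepsilon,1]$ with $\varepsilon>0$ the functions $t^{s_n-1+l}$ converge uniformly to $t^{s-1+l}$ and, along a further subsequence, $c_{l,n}\to\gamma_l$; since the limiting power functions are linearly independent on $[\varepsilon,1]$, this forces $\gamma_l=0$ for $1\le l\le k$, and also $\gamma_0=0$ when $s>0$. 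The only remaining possibility is $\gamma_0\ne0$ with $s=0$; but then, since $\Gamma(s_n+l)^{-1}c_{l,n}\,t^{s_n-1+l}\to0$ in $L^1(I)$ for $l\ge1$, one has $\norm{\Gamma(s_n)^{-1}c_{0,n}\,t^{s_n-1}}_{L^1(I)}\to0$, while $\norm{\Gamma(s_n)^{-1}t^{s_n-1}}_{L^1(I)}=\Gamma(s_n+1)^{-1}\to1$, whence $c_{0,n}\to0$ as well. With $c_{l,n}\to0$ and $\norm{\phi_n}_{L^1(I)}\to0$ in hand, the identities for $d^{m+s_n}w_n$ (items 1--4 in the proof of Lemma \ref{1d_approx_up}, whose coefficients are ratios of $\Gamma$-values with arguments in the compact pole-free ranges above, hence uniformly bounded) yield $\norm{d^{m+s_n}w_n}_{L^1(I)}\to0$ for every $m=0,\dots,k-1$, so $\sum_{l=0}^{k-1}TV^{s_n+l}(w_n)\to0$ --- contradicting the normalization.

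The step I expect to be the main obstacle is controlling the coefficients $c_{l,n}$ through the limit when $s_n$ is allowed to drift to $0$: there the singular basis function $t^{s_n-1}/\Gamma(s_n)$ degenerates into a family concentrating at $t=0$, so the ``fixed $s$'' linear-independence argument for the power functions breaks down and must be supplemented by the identity $\norm{t^{s_n-1}/\Gamma(s_n)}_{L^1(I)}=\Gamma(s_n+1)^{-1}$. A secondary but necessary piece of bookkeeping is to check that no constant hidden inside the (contradiction-based) proof of Lemma \ref{1d_approx_up} blows up as $s\to0$ or $s\to1$, which --- as noted --- reduces to verifying that all the $\Gamma$-arguments entering the argument remain in compact subsets of $(0,+\infty)$.
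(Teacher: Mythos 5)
Your proposal is correct and follows essentially the same route as the paper: a contradiction argument with a normalized sequence $(w_n,s_n)$, uniform bounds on the coefficients $c_{l,n}$ in the representation from Theorem \ref{thm_MR3144452} (all $\Gamma$-arguments staying away from poles), extraction of limits, and the differentiation identities from Lemma \ref{1d_approx_up} to contradict the normalization. In fact your treatment is more careful than the paper's, which simply invokes ``the same arguments'' from the fixed-$s$ lemma; your explicit handling of the degenerate case $s_n\to 0$ via $\norm{t^{s_n-1}/\Gamma(s_n)}_{L^1(I)}=\Gamma(s_n+1)^{-1}\to 1$ fills a genuine gap in the paper's exposition.
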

\begin{proof}
We only proof this proposition for case of dimension one, i.e., $N=1$. The case in which $N\geq 2$ can be obtained from
the one dimensional result, and the arguments from Theorem \ref{liangbiankongzhizhongjian}. \\\\
Let $w\in BV^{k+s}(I)$ be given. In view of Lemma \ref{liangbiankongzhizhongjian} we have, for each $s\in (0,1)$, a constant $C_s>0$
(depending on $s$) such that 
\be\label{unif_upper_bdd}
\sum_{l=0}^{\ir-1}TV^{s+l}(w)\leq C_s\fmp{\norm{w}_{L^1(I)}+TV^{\ir+s}(w)}
\ee
for all $w\in L^1(I)$. \\\\
We shall only deal with the case $\ir=1$, as the case $\ir>1$ can be treated analogously. Suppose \eqref{unif_upper_bdd} fails, i.e.
there exist sequences $\seqn{w_n}\subset L^1(I)$ and $\seqn{s_n}\subset (0,1)$ such that 
\be
TV^{s_n}(w_n)=1 \qquad\text{ and }\qquad\norm{w_n}_{L^1(I)}+TV^{1+s_n}(w_n)<1/n.
\ee
In view of \eqref{const_upper_bdd}, we have
\begin{align}
\abs{c_{0,n}}+\abs{c_{1,n}}&\leq \fmp{\frac1{\Gamma(2-s_n)}+1}\fmp{\norm{w_n}_{L^1}+TV^{s_n}(w_n)+TV^{1+s_n}(w_n)}\notag\\
&\leq \frac1{\Gamma(2-s_n)}+1+\frac1n\leq 2+\frac1n.
\end{align}
Hence, there exist $c_0$ and $c_1$ such that $c_{0,n}\to c_0$ and $c_{1,n}\to c_1$. Then, we may reach the contradiction by using the same arguments from \eqref{shangyidian1} to \eqref{shangyidian2}.
\end{proof}
\subsubsection{Compact embedding and lower semi-continuity}\label{subsubsec_compact}
We start again with a result on the an-isotropic total variation. Recall the image space $IM(Q)$ from \eqref{image_function}.
\begin{proposition}\label{compact_lsc_r}
Given sequences $\seqn{r_n}\subset \R^+$ and $\seqn{u_n}\subset L^1(Q)$ such that $r_n\to r\in\R^+$ and, for some $p>1$,
\be\label{rn_uniform_uppd}
\sup\flp{\norm{u_n}_{L^p(Q)}+TV_{\ell^1}^{r_n}(u_n):\,\,n\in\N}<+\infty,
\ee
then, the following statement hold.
\begin{enumerate}[1.]
\item
There exists $u\in BV^r(Q)$, such that, up to a subsequence, $u_n\wto u$ in $L^p(Q)$ and 
\be
\liminf_{n\to\infty} TV_{\ell^1}^{r_n}(u_n)\geq TV_{\ell^1}^r(u).
\ee
\item
Assuming in addition that ${u_n}\subset IM(Q)\cap SV^{r_n}(Q)$ and $r_n\to r>0$, then
\be
u_n\to u\text{ strongly in }L^1(Q).
\ee
\end{enumerate}
\end{proposition}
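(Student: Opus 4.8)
The plan is to reduce Part 1 to the argument of Proposition \ref{compact_lsc_s}, and Part 2 to the fixed‑order compact embedding of Theorem \ref{ATV_real_embedding}.

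For Part 1, since $\seqn{u_n}$ is bounded in $L^p(Q)$ with $p>1$, a subsequence converges weakly in $L^p(Q)$ (weakly-$\ast$ when $p=+\infty$) to some $u\in L^p(Q)\subseteq L^1(Q)$. Fix a test field $\vp$ admissible for $TV_{\ell^1}^r$. By Lemma \ref{linftyds} the functions $\divg^{r_n}\vp$ are uniformly bounded in $L^\infty(Q)$ and converge a.e.\ to $\divg^r\vp$: when $r\notin\N$ one has $\lfloor r_n\rfloor=\ir$ eventually; when $r\in\N$ one uses the two one‑sided limits of Lemma \ref{linftyds} together with the scaling factor in \eqref{scaled_divg} to match the test fields of $TV^{\lfloor r_n\rfloor}$ with those of $TV^r$. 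Dominated convergence then gives $\divg^{r_n}\vp\to\divg^r\vp$ in $L^{p'}(Q)$, and Hölder's inequality combined with the weak convergence of $\seqn{u_n}$, exactly as in the chain ending at \eqref{need_for_tv_case}, yields
\be
\liminfn TV_{\ell^1}^{r_n}(u_n)\ge\liminfn\int_Q u_n\,\divg^{r_n}\vp\,dx=\int_Q u\,\divg^r\vp\,dx .
\ee
Taking the supremum over $\vp$ proves $TV_{\ell^1}^r(u)\le\liminfn TV_{\ell^1}^{r_n}(u_n)<+\infty$, whence $u\in BV^r(Q)$ by Remark \ref{an_iso_equ}.

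For Part 2 it remains to upgrade the weak $L^p(Q)$ convergence to strong $L^1(Q)$ convergence. Passing to a subsequence we may assume $\lfloor r_n\rfloor=m$ is constant and $r_n=m+s_n$ with $s_n\to s\in[0,1]$. If $m=0$, then $r_n=s_n\to r=s\in(0,1]$, and since $u_n\in SV^{r_n}(Q)\cap\IM(Q)$, Statement 2 of Proposition \ref{compact_lsc_s} applies directly. If $m\ge1$ and $s>0$, then Theorem \ref{liangbiankongzhizhongjian} together with Proposition \ref{zhendexiao_le} (whose constant depends only on $m$) gives $\sup_n TV_{\ell^1}^{s_n}(u_n)<+\infty$; running the smooth‑approximation device from the proofs of Propositions \ref{bergounioux2017fractional} and \ref{zhendexiao_le} on the smooth approximants furnished by Proposition \ref{approx_smooth} shows moreover that, for any fixed $\sigma\in(0,s)$, one has $u_n\in SV^\sigma(Q)$ and $\sup_n TV_{\ell^1}^\sigma(u_n)<+\infty$. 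Then $\seqn{u_n}\subset\IM(Q)$ and Theorem \ref{ATV_real_embedding} furnish a subsequence converging strongly in $L^1(Q)$, necessarily to $u$.

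The hard case, and the main obstacle, is $m\ge1$ with $s=0$, that is, a subsequence with $r_n\to r\in\N$ from above: now $s_n\downarrow0$, the estimate on $TV_{\ell^1}^{s_n}(u_n)$ degenerates, and no fixed positive fractional order survives to feed into Theorem \ref{ATV_real_embedding}. (The complementary subsequence with $r_n\to r$ from below has $\lfloor r_n\rfloor=r-1$ and fractional parts tending to $1$, so it is covered by the previous case.) The proposed remedy is to exploit the so‑far unused hypothesis $u_n\in BV(Q)$: the aim is to prove $\sup_n TV(u_n)<+\infty$ and then conclude by the classical compact embedding $BV(Q)\hookrightarrow L^1(Q)$. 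Deriving this uniform $BV$ bound from the bound on $TV_{\ell^1}^{r_n}(u_n)=TV_{\ell^1}^{m+s_n}(u_n)$ as $s_n\downarrow0$ is precisely where the control on the trace $T[u_n]$ and the boundary behaviour of the fractional derivatives must enter, since fractional Sobolev seminorms do not in general control integer ones; I expect this to be the principal technical point of the proof.
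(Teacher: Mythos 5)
Parts 1 and the first two subcases of Part 2 of your argument match the paper's proof (the paper routes them through Proposition \ref{compact_lsc_s} and Proposition \ref{zhendexiao_le}, exactly as you do). The genuine gap is the case you flag yourself: $r_n=m+s_n$ with $m\ge 1$ and $s_n\searrow 0$. You leave this unresolved, and the direction you propose --- deriving a uniform bound on $TV(u_n)$ and applying the classical compact embedding to $u_n$ itself --- is not what the paper does and is not obviously attainable: the interpolation inequalities available (Lemma \ref{1d_approx_up}, Proposition \ref{zhendexiao_le}) only control the \emph{fractional} intermediate orders $s_n, 1+s_n,\dots$, never the integer order $1$, and as $s_n\to 0$ the controlled seminorm $TV_{\ell^1}^{s_n}(u_n)$ degenerates to the $L^1$ norm, so no integer-order bound on $u_n$ survives the limit.

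The paper's actual resolution applies the classical compact embedding not to $u_n$ but to its fractional gradient $\nabla^{s_n}u_n$. After replacing $u_n$ by smooth approximants (Proposition \ref{approx_smooth}), testing the definition of $TV_{\ell^1}^{1+s_n}$ against $\divg\vp$ gives directly
\begin{equation}
TV_{\ell^1}(\nabla^{s_n}u_n)\le TV_{\ell^1}^{1+s_n}(u_n)\le M,\qquad \norm{\nabla^{s_n}u_n}_{L^1(Q)}=TV_{\ell^1}^{s_n}(u_n)\le M,
\end{equation}
so $\seqn{\nabla^{s_n}u_n}$ is bounded in the ordinary $BV(Q)$ and converges strongly in $L^1(Q)$ along a subsequence. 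The missing idea is then the estimate $\norm{u_n-\nabla^{s_n}u_n}_{L^1(Q)}\to 0$, proved slicewise from the representation of Theorem \ref{thm_MR3144452}: writing $w_n=\frac{c_{0,n}}{\Gamma(s_n)}t^{s_n-1}+\frac{c_{1,n}}{\Gamma(s_n+1)}t^{s_n}+\mathbb I^{1+s_n}\phi_n$, the trace bound in $\IM(Q)$ forces $c_{0,n}=0$ (otherwise $w_n$ blows up at $t=0$), and since $d^{s_n}w_n=c_{1,n}+\mathbb I^1\phi_n$ one gets
\begin{equation}
\norm{w_n-d^{s_n}w_n}_{L^1}\le \abs{c_{1,n}}\abs{\tfrac{1}{\Gamma(s_n+1)(s_n+1)}-1}+\abs{\mathbb I^{1+s_n}-\mathbb I^{1}}\,\norm{\phi_n}_{L^1}\to 0,
\end{equation}
using the uniform bounds on $\abs{c_{1,n}}+\norm{\phi_n}_{L^1}$ and the strong continuity of the semigroup $\mathbb I^r$ (Theorem \ref{thm_MR1347689}). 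Strong $L^1$ convergence of $\nabla^{s_n}u_n$ then transfers to $u_n$ by the triangle inequality. So your diagnosis of where the difficulty lies is accurate, but the proof is incomplete, and the paper fills the gap with a mechanism (compactness of the fractional gradients plus an asymptotic identification $u_n\approx\nabla^{s_n}u_n$) different from the one you sketch.
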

\begin{proof}
Write $r_n=\lfloor r_n\rfloor+s_n$ for $s_n\in[0,1)$. By \eqref{rn_uniform_uppd}, there exists $u\in L^p(Q)$ such that, up to a subsequence,
\be\label{weak_r_lp}
u_n\wto u\text{ weakly in }L^p(Q).
\ee
Then Statement 1 can be proved by using the same arguments from Proposition \ref{compact_lsc_s}. \\\\
We next prove Statement 2. We only study the case of $1\leq r_n\leq 2$, or equivalently $\ir=1$, as the case in which $\ir\geq 2$ can be dealt analogously. Again by applying Proposition \ref{zhendexiao_le} we have 
\begin{align}
\sup_n\norm{u_n}_{BV^{s_n}(Q)} &\leq C\sup_n\norm{u_n}_{BV^{r_n}(Q)}
\leq \sup_n\{\norm{u_n}_{L^p(Q)}+TV_{\ell^1}^{r_n}(u_n)\}=:M<+\infty,
\label{jiangweigongji}
\end{align}
where $C>0$, obtained from Proposition \ref{zhendexiao_le}, is a constant independent of $r_n$. Assume in addition that there exists $\e>0$ such that $\seqn{s_n}\subset[\e,1]$. Then, in view of Proposition \ref{compact_lsc_s}, there exists $u\in L^1(Q)$ 
such that, up to a subsequence, that $u_n\to u$ strongly in $L^1(Q)$, which gives Statement 2.\\\\
We now deal with the situation that $s_n\searrow 0$, that is, $r_n\searrow 1$. In this case, although \eqref{jiangweigongji} still holds, Proposition \ref{compact_lsc_s} does not produce a subsequence
strongly converging in $L^1(Q)$. We proceed by using Theorem \ref{approx_smooth} to relax $u_n$, for each $n\in\N$, such that $u_n\in BV^{1+s_n}(Q)\cap C^\infty(Q)$. Hence, we have, for arbitrary $\vp\in C_c^\infty(Q,\R^2)$, that (recall \eqref{RLFODr})
\be
-\int_Q \nabla^{s_n} u_n\,\divg \vp\,dx=\int_Q u_n[\divg^{s_n}\divg] \vp\,dx=\int_Q u_n\divg^{1+s_n}\vp\,dx\leq TV_{\ell^1}^{1+s_n}(u_n),
\ee
which implies 
\be\label{ds_tv_eq}
TV_{\ell^1}(\nabla^{s_n}u_n)\leq TV_{\ell^1}^{1+s_n}(u_n)\leq M<+\infty.
\ee
Moreover, by Proposition \ref{zhendexiao_le}, 
\be\label{ds_tv_eq2}
\norm{\nabla^{s_n}u_n}_{L^1(Q)} = TV_{\ell^1}^{s_n}(u_n)\leq C \norm{u_n}_{BV^{1+s_n}(Q)}\leq M<+\infty.
\ee
We claim 
\be\label{un_dsn_go}
\limsup_{n\to\infty}\norm{u_n-\nabla^{s_n}u_n}_{L^1(Q)}=0.
\ee
We start from the one dimensional case, i.e. $Q=I=(0,1)$, and use $w_n$ to represent $u_n$. 
By Theorem \ref{thm_MR3144452}, for each $n\in\N$, there exists $\phi_n(t)\in L^1(I)$ such that
\be
w_n(t)= \frac{c_{0,n}}{\Gamma(s_n)}t^{s-1}+ \frac{c_{1,n}}{\Gamma(s_n+1)}t^{s_n}+\mathbb I^{1+s_n} \phi_n(t)
\ee
for a.e. $t$, and 
\be
d^{s_n} w_n(t) = c_{1,n} + \mathbb I^1\phi_n(t).
\ee
Next, in view of \eqref{r_int_frac_def}, we have, for each $n\in\N$ fixed, that 
\be
\mathbb I^{1+s}\phi_n(t) = \frac{1}{\Gamma(1+s)}\int_0^t \frac{\phi_n(z)}{\fsp{t-z}^{1-(1+s)}}dz=\frac{1}{\Gamma(1+s)}\int_0^t {\phi_n(z)}\fsp{t-z}^s dz.
\ee
That is, we have
\be
\lim_{t\to 0}\abs{\mathbb I^{1+s}\phi_n(t) }<+\infty\text{, for each }n\in\N.
\ee
However, $t^{s-1}\to +\infty$ as $t\to 0^+$. Hence, $c_{0,n}=0$ must hold, 
as the opposite gives $w_n(t)\to+\infty$ as $t\to 0$, which contradicts our assumption $w_n\in IM(Q)$. Thus, we have
\be
w_n(t)= \frac{c_{1,n}}{\Gamma(s+1)}t^{s}+\mathbb I^{1+s} \phi_n(t),\qquad \text{for a.e. }t.
\ee
Then, direct computation gives
\begin{align}\label{upper_un_dsn}
\begin{split}
\norm{w_n-d^{s_n}w_n}_{L^1(Q)} &= \norm{\frac{c_{1,n}}{\Gamma(s_n+1)}t^{s_n}-c_{1,n}+\mathbb I^{1+s_n}\phi_n-\mathbb I^1\phi_n}_{L^1(Q)}\\
&\leq \norm{\frac{c_{1,n}}{\Gamma(s_n+1)}t^{s_n}-c_{1,n}}_{L^1(Q)}+\norm{\mathbb I^{1+s_n}\phi_n-\mathbb I^1\phi_n}_{L^1(Q)}\\
&\leq \abs{c_{1,n}}\abs{\frac1{\Gamma(s_n+1)(s_n+1)}-1}+\abs{\mathbb I^{1+s_n}-\mathbb I^1}\norm{\phi_n}_{L^1(Q)}.
\end{split}
\end{align}
Moreover, using the same argument from the proof of \eqref{const_upper_bdd}, we have 
\be
\abs{c_{1,n}}+\norm{\phi_n}_{L^1(Q)}\leq 2\fmp{\norm{d^{s_n}w_n}_{L^1(Q)}+\norm{d^{s+n+1}w_n}_{L^1(Q)}}.
\ee
This, combined with \eqref{upper_un_dsn}, implies that 
\begin{align*}
&\|u_n-\nabla^{s_n}u_n\|_{L^1(Q)} = \int_0^1\norm{u_n\lfloor_{x_1}-d^{s_n}u_n\lfloor_{x_1}}_{L^1(Q)}dx_1\\
&\leq  2\fmp{\abs{\frac1{\Gamma(s_n+1)(s_n+1)}-1}+\abs{\mathbb I^{1+s_n}-\mathbb I^1}}
\int_0^1\fmp{\norm{d^{s_n}u_n\lfloor_{x_1}}_{L^1}+\norm{d^{s+n+1}u_n\lfloor_{x_1}}_{L^1}}dx_1\\
&\leq  2\fmp{\abs{\frac1{\Gamma(s_n+1)(s_n+1)}-1}+\abs{\mathbb I^{1+s_n}-\mathbb I^1}}\norm{u_n}_{BV^{r_n}(Q)}.
\end{align*}
By Theorem \ref{thm_MR1347689}, Assertion \ref{semigroup_frac_int}, we have $\abs{\mathbb I^{s_n+1}-\mathbb I^1}\to 0$, as $s_n\to 0$, and hence we conclude \eqref{un_dsn_go}. Next, by \eqref{ds_tv_eq} and \eqref{ds_tv_eq2}, and the compact embedding in standard $BV(Q)$ space, up to a subsequence, there exists $\bar u\in BV(Q)$, that 
\be
\nabla^{s_n}u_n\to \bar u\text{ strongly in }L^1(Q).
\ee
Hence, by \eqref{un_dsn_go}, we have 
\be
\norm{u_n-\bar u}_{L^1(Q)}\leq \norm{u_n- \nabla^{s_n}u_n}_{L^1(Q)}+\norm{\nabla^{s_n}u_n-\bar u}_{L^1(Q)}\to 0,
\ee
that is, we have $u_n\to \bar u$ strongly in $L^1(Q)$. 
Finally, in view of \eqref{weak_r_lp}, we have $u=\bar u$, and hence the thesis.
\end{proof}
We conclude this section by proving the second main result of this article.

\begin{theorem}[lower semi-continuity and compact embedding in $TV^r_\ellp$ seminorms]\label{compact_lsc_r_tv}
Given sequences $\seqn{r_n}\subset \R^+$, $\seqn{p_n}\subset [1,+\infty]$, and $\seqn{u_n}\subset L^1(Q)$ such that $r_n\to r\in\R^+\cup\flp{0}$ and $p_n\to p\in[1,+\infty]$,
 and there exists $q\in(1,+\infty]$ such that 
\be\label{uniform_tv_upperbdd}
\sup\flp{\norm{u_n}_{L^q(Q)}+TV^{r_n}(u_n):\,\,n\in\N}<+\infty,
\ee
then, the following statements hold.
\begin{enumerate}[1.]
\item
There exists $u\in BV^r(Q)$ such that, up to a subsequence, $u_n\wto u$ weakly in $L^q(Q)$ and 
\be
\liminf_{n\to\infty} TV_{\ell^{p_n}}^{r_n}(u_n)\geq TV_\ellp^r(u).
\ee
\item
Assuming in addition that ${u_n}\subset IM(Q)\cap SV^{r_n}(Q)$ and $r_n\to r>0$, we have 
\be
u_n\to u\text{ strongly in }L^1(Q).
\ee
\end{enumerate}
\end{theorem}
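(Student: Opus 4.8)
The plan is to derive Theorem \ref{compact_lsc_r_tv} from the an-isotropic statements of Proposition \ref{compact_lsc_r}, using the equivalence of the seminorms $TV^r_{\ell^p}$ recorded in Remark \ref{an_iso_equ} (in particular \eqref{equ_p_1_p}) to move between the $\ell^{p_n}$- and the $\ell^1$-versions, and treating the extra dependence on the exponents $p_n$ by renormalising the competitors in the supremum defining $TV^{r_n}_{\ell^{p_n}}$.

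First I would fix the limit and the uniform bounds. Write $r_n=\lfloor r_n\rfloor+s_n$ with $s_n\in[0,1)$. By \eqref{equ_p_1_p} the quantities $TV^{r_n}_{\ell^{p_n}}(u_n)$ and $TV^{r_n}_{\ell^1}(u_n)$ agree up to a factor bounded by $N$, so \eqref{uniform_tv_upperbdd} makes both uniformly bounded. Since $q>1$, the bound on $\norm{u_n}_{L^q(Q)}$ gives, along a (non-relabelled) subsequence, some $u\in L^q(Q)$ with $u_n\wto u$ weakly in $L^q(Q)$ (weakly-$\ast$ if $q=+\infty$); this $u$ will be the claimed limit, and it is unique along the subsequence.

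For the lower semicontinuity in Statement 1 I would argue straight from Definition \ref{isotropic_frac_variation}. Fix $\vp\in C_c^\infty(Q;\M^{N\times(N^{\ir})})$ admissible for $TV^r_\ellp(u)$, i.e.\ $\abs{\vp}_\ellp^\ast\le1$; recall $\abs{\cdot}_\ellp^\ast$ is the $\ell^{p'}$-norm, $1/p+1/p'=1$. Since $\vp$ is continuous with compact support its range is bounded, and for every fixed vector $v$ one has $\abs{v}_{\ell^{t}}\to\abs{v}_{\ell^{p'}}$ as $t\to p'$, uniformly for $v$ in bounded sets; hence $\lambda_n:=\max\{1,\sup_{x\in Q}\abs{\vp(x)}_{\ell^{p_n'}}\}\to1$ and $\vp/\lambda_n$ is admissible for $TV^{r_n}_{\ell^{p_n}}$ (for $n$ large, so that $\lfloor r_n\rfloor=\ir$ when $r\notin\N$; the integer case is addressed below). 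Then
\be
TV^{r_n}_{\ell^{p_n}}(u_n)\ge\frac1{\lambda_n}\int_Q u_n\,\divg^{s_n}[\divg^{\lfloor r_n\rfloor}\vp]\,dx .
\ee
By Lemma \ref{linftyds}, Statement 1, the functions $\divg^{s_n}[\divg^{\lfloor r_n\rfloor}\vp]$ are bounded in $L^\infty(Q)$ with support in a fixed compact set, and by Statement 2 of the same lemma (together with its one-sided limits when $r$ is an integer) they converge a.e.\ to $\divg^{s}[\divg^{\ir}\vp]$; since $Q$ is bounded, dominated convergence upgrades this to strong convergence in $L^{q'}(Q)$. Combining the strong $L^{q'}$-convergence with $u_n\wto u$ and with $\lambda_n\to1$,
\be
\liminf_{n\to\infty}TV^{r_n}_{\ell^{p_n}}(u_n)\ge\int_Q u\,\divg^{s}[\divg^{\ir}\vp]\,dx,
\ee
and taking the supremum over all admissible $\vp$ yields $\liminf_{n\to\infty}TV^{r_n}_{\ell^{p_n}}(u_n)\ge TV^r_\ellp(u)$. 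In particular $TV^r_\ellp(u)<+\infty$, hence $u\in BV^r(Q)$ by \eqref{equ_p_1_p}; for the boundary value $r=0$ one instead runs the same scheme using the identification of $TV^0$ with the $L^1$-norm and the $s\searrow0$ behaviour of $\divg^s$ from Proposition \ref{bergounioux2017fractional}, or simply invokes Proposition \ref{compact_lsc_s}, Statement 1.

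For Statement 2, observe that the additional hypotheses $\seqn{u_n}\subset IM(Q)\cap SV^{r_n}(Q)$ and $r_n\to r>0$, together with the uniform bound $\sup_n\{\norm{u_n}_{L^q(Q)}+TV^{r_n}_{\ell^1}(u_n)\}<+\infty$ already established (with $q>1$), are precisely the hypotheses of Proposition \ref{compact_lsc_r}, Statement 2. That proposition yields a further subsequence along which $u_n\to u$ strongly in $L^1(Q)$; strong $L^1$-convergence implies a.e.\ convergence along a subsequence, so this strong limit coincides with the weak $L^q$-limit found above, completing the proof. I expect the genuinely delicate point to be the lower-semicontinuity step when $r$ is an integer: there both the tensor order of the admissible test functions and the scalarisation performed by $\divg^{s_n}$ change in the limit, and one must fall back on the one-sided limits $\divg^r\vp\to\divg^{\ir}\vp$ ($r\to\ir^+$), $\divg^r\vp\to\divg^{\lceil r\rceil}\vp$ ($r\to\lceil r\rceil^-$) of Lemma \ref{linftyds} and on the $s\to0$ computation behind Proposition \ref{bergounioux2017fractional}; this is the same phenomenon already handled in Propositions \ref{compact_lsc_s} and \ref{compact_lsc_r}, so only careful bookkeeping, not a new idea, is required.
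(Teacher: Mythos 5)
Your proposal is correct and follows essentially the same route as the paper: reduce the hypothesis to the an-isotropic bound via the norm equivalence \eqref{equ_p_1_p}, obtain the weak $L^q$ limit, prove the liminf inequality test-function by test-function using Lemma \ref{linftyds} and dominated convergence (this is exactly the computation \eqref{need_for_tv_case}), and invoke Proposition \ref{compact_lsc_r} for the strong $L^1$ compactness in Statement 2. Your renormalisation $\lambda_n\to1$ of the competitors to handle the varying exponents $p_n$ is in fact a useful detail that the paper's two-line proof glosses over, since the equivalence constants $N^{1/p-1/q}$ alone would not yield the constant-free liminf inequality.
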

\begin{proof}
By applying Remark \ref{an_iso_equ}, we deduce that 
\eqref{uniform_tv_upperbdd} implies \eqref{rn_uniform_uppd}, 
and the thesis follows by combining \eqref{need_for_tv_case} and Theorem \ref{weak_star_comp_s}.
\end{proof}

\subsection{\emph{ROF} model with real order total variation}
In this subsection we equip the \emph{ROF} model, introduced in \eqref{intro_B_train_level2}, with the  $TV_{\ell^p}^r$ seminorm.\\\\
Given $r\in \R^+$ and $p\in[1,+\infty]$, we introduce the $ROF^r_{\ell^p}$ image processing framework 
\be
ROF_{\ell^p}^r(u):=\norm{u-u_\eta}_{L^2(Q)}^2+\alpha TV_{\ell^p}^{r}(u),
\ee
where we recall $u_\eta\in L^2(Q)$ is a given corrupted image.\\\\
The following proposition is a direct consequence of Theorem \ref{compact_lsc_r_tv}.
%
%
\begin{proposition}\label{tgv_r_exist}
Given $p\in[1,+\infty]$, $r\in\R^+$, and $u_\eta\in L^2(Q)$. Then we have that the minimization problems
\be\label{need_optimility_cond}
\argmin\flp{ROF_{\ell^p}^r(u):\,\,u\in BV^r(Q)}
\ee
and
\be\label{need_optimility_condss}
\argmin\flp{ROF_{\ell^p}^r(u):\,\,u\in SV^r(Q)},
\ee
admits a unique solution $u_r\in BV^r(Q)$ and $u_r\in SV^r(Q)$, respectively.
\end{proposition}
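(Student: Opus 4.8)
The plan is the direct method of the calculus of variations, with Theorem \ref{compact_lsc_r_tv} supplying compactness and lower semicontinuity of the regularizer, the elementary weak lower semicontinuity of the quadratic fidelity handled by hand, and uniqueness coming from strict convexity. First I treat the problem \eqref{need_optimility_cond} on $BV^r(Q)$. Take a minimizing sequence $\seqn{u_n}\subset BV^r(Q)$; since $ROF_{\ell^p}^r\ge 0$, the values $ROF_{\ell^p}^r(u_n)$ are bounded, and because $u_\eta\in L^2(Q)$ the fidelity term forces $\sup_n\norm{u_n}_{L^2(Q)}<+\infty$ while the regularizer forces $\sup_n TV_{\ell^p}^r(u_n)<+\infty$. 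Theorem \ref{compact_lsc_r_tv} then applies with $r_n\equiv r$, $p_n\equiv p$ and $q=2$: along a subsequence $u_n\wto u$ weakly in $L^2(Q)$ with $u\in BV^r(Q)$ and $\liminf_n TV_{\ell^p}^{r}(u_n)\ge TV_{\ell^p}^r(u)$. Since $v\mapsto\norm{v-u_\eta}_{L^2(Q)}^2$ is convex and strongly continuous on $L^2(Q)$, hence weakly lower semicontinuous, adding the two inequalities yields $ROF_{\ell^p}^r(u)\le\liminf_n ROF_{\ell^p}^r(u_n)$, so $u$ solves \eqref{need_optimility_cond}.

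\textbf{Uniqueness.} For uniqueness I would invoke strict convexity: $BV^r(Q)$ is a vector space, hence convex, by the linearity \eqref{linearity_frac} of the fractional derivative together with the subadditivity of $TV_{\ell^p}^r$; the fidelity $v\mapsto\norm{v-u_\eta}_{L^2(Q)}^2$ is strictly convex on $L^2(Q)$; and $TV_{\ell^p}^r$ is convex, being a supremum of linear functionals by Definition \ref{isotropic_frac_variation}. Thus $ROF_{\ell^p}^r$ is strictly convex on $BV^r(Q)$, so two distinct minimizers $u_1\ne u_2$ would make $\tfrac12(u_1+u_2)\in BV^r(Q)$ strictly lower in energy, a contradiction. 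Denote the unique minimizer $u_r$.

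\textbf{The restricted problem \eqref{need_optimility_condss}.} The plan is to show that $u_r\in SV^r(Q)$: since $SV^r(Q)\subset BV^r(Q)$, this immediately makes $u_r$ a solution of \eqref{need_optimility_condss}, and uniqueness for \eqref{need_optimility_condss} then follows because any minimizer over $SV^r(Q)$ realizes the value $\inf_{BV^r(Q)}ROF_{\ell^p}^r$ and is therefore a minimizer over $BV^r(Q)$, hence equal to $u_r$. To obtain $u_r\in SV^r(Q)$, I would run the direct method again with a minimizing sequence $\seqn{u_n}\subset SV^r(Q)$ for \eqref{need_optimility_condss}, extract as above a weak $L^2$-limit $\bar u\in BV^r(Q)$ with $ROF_{\ell^p}^r(\bar u)\le\inf_{SV^r(Q)}ROF_{\ell^p}^r$, and then argue that $\seqn{u_n}$ in fact converges strictly: the rigidity of minimizing sequences forces the two lower semicontinuity estimates to become equalities, so $\norm{u_n-u_\eta}_{L^2(Q)}\to\norm{\bar u-u_\eta}_{L^2(Q)}$, which combined with $u_n\wto\bar u$ in the Hilbert space $L^2(Q)$ upgrades to $u_n\to\bar u$ strongly in $L^2(Q)$, hence in $L^1(Q)$, while simultaneously $TV_{\ell^p}^r(u_n)\to TV_{\ell^p}^r(\bar u)$, i.e. $u_n\wtogsp\bar u$ in the sense of Definition \ref{strict_conv_BVr}. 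Since $SV^r(Q)$ is closed under ss-$(p)$ convergence — being an intersection of the ss-$(p)$ closures of $C^\infty(Q)$, with the equivalence of the $TV_{\ell^p}^r$ from Remark \ref{an_iso_equ} and the smooth strict approximation of Proposition \ref{approx_smooth} used to pass between the underlying norms — it follows that $\bar u\in SV^r(Q)$, hence $\bar u$ is admissible for \eqref{need_optimility_cond} and $\bar u=u_r$.

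\textbf{Expected main difficulty.} The compactness and lower semicontinuity are exactly Theorem \ref{compact_lsc_r_tv}, and the convexity bookkeeping is routine; the genuinely delicate point I expect is the last one — showing that the minimizer of the restricted problem truly lies in $SV^r(Q)$ rather than merely in $BV^r(Q)$, equivalently that shrinking the admissible class from $BV^r(Q)$ to $SV^r(Q)$ does not raise the infimum. This forces the argument to go through the rigidity of minimizing sequences (to promote weak $L^2$-convergence to strict convergence) together with the closedness of $SV^r(Q)$ under strict convergence, rather than through a naive weak-limit argument; the secondary subtlety that $SV^r(Q)$ is built from the ss-$(q)$ closures for \emph{all} $q\in[1,+\infty]$ while \eqref{need_optimility_condss} fixes a single $p$ is absorbed by the norm equivalence of Remark \ref{an_iso_equ} and a diagonal smooth-approximation argument.
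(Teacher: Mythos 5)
Your proposal follows essentially the same route as the paper's own proof: the direct method with Theorem \ref{compact_lsc_r_tv} supplying compactness and lower semicontinuity, uniqueness from strict convexity of the quadratic fidelity plus convexity of $TV^r_{\ell^p}$ (which the paper dismisses as a ``standard argument from the Calculus of variation''), and, for problem \eqref{need_optimility_condss}, the rigidity of the minimizing sequence forcing convergence of both energy terms, the Radon--Riesz upgrade from weak to strong $L^2$ convergence, and the closedness of $SV^r(Q)$ under ss-$(p)$ convergence to place the limit in $SV^r(Q)$. The only differences are expository: you make explicit the convexity bookkeeping and the weak-to-strong upgrade that the paper leaves implicit, so no further comparison is needed.
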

\begin{proof}
Let $\seqn{u_n}\subset L^1(Q)$ be an minimizing sequence. That is, we have
\be\label{tgv_r_exist_eq1}
\norm{u_n-u_\eta}_{L^2(Q)}^2+{TV_{\ell^p}^{r}(u_n)}\to m:=\inf\flp{\norm{u-u_\eta}_{L^2(Q)}^2+{TV_{\ell^p}^{r}(u)}:\,\,u\in BV^r(Q)}.
\ee
That is,
\be
\sup\flp{\norm{u_n}_{L^2(Q)}+TV_{\ell^p}^r(u_n):\,\, n\in\N}<+\infty.
\ee
This, combined with Theorem \ref{compact_lsc_r_tv} (with $p=2$ and $r_n=r\in\R^+$), gives that, up to a subsequence, 
\be\label{tgv_r_exist_eq2}
u_n\wto u_0\text{ weakly in }L^2(Q),
\ee
and
\be\label{tgv_r_exist_eq3}
\liminfn \,TV_{\ell^p}^r(u_n)\geq \liminfn\, TV_{\ell^p}^r(u_0).
\ee
Thus, by a standard argument from the Calculus of variation, we conclude that $u_0$ is the unique solution of \eqref{need_optimility_cond}.\\\\
We could similarly show that there exists $u_0\in BV^r(Q)$ belongs to \eqref{need_optimility_condss}. We only need to claim that $u_0\in SV^r(Q)$. Indeed, by combining \eqref{tgv_r_exist_eq1}, \eqref{tgv_r_exist_eq2}, and \eqref{tgv_r_exist_eq3}, we have that 
\be\label{tgv_r_exist_eq4}
\norm{u_n-u_\eta}_{L^2(Q)}\to \norm{u_0-u_\eta}_{L^2(Q)}\text{ and }TV_{\ell^p}^r(u_n)\to TV_{\ell^p}^r(u_0).
\ee
Since $\seqn{u_n}\subset SV^r(Q)$, we can obtain $\seqn{v_n}\subset SV^r(Q)$ such that 
\be
\norm{v_n-u_n}_{L^2(Q)}+\abs{TV_{\ell^p}^r(v_n)-TV_{\ell^p}^r(u_n)}\leq \frac1n.
\ee
This implies that $v_n\to u_0$ in $L^2$ strong and also
\be
\norm{v_n-u_\eta}_{L^2(Q)}+{TV_{\ell^p}^r(v_n)}\to \norm{u_0-u_\eta}_{L^2(Q)}^2+{TV_{\ell^p}^{r}(u_0)}.
\ee
Together with \eqref{tgv_r_exist_eq4}, we have 
\be
TV_{\ell^p}^r(v_n)\to TV_{\ell^p}^r(u_0),
\ee
and hence we have $u_0\in SV^r(Q)$.
\end{proof}
%
%
%

\section{Special case with zero boundary conditions and center-sided operations}\label{image_application_sec}
\subsection{Equivalence between space $SV$ and $BV$ in zero boundary conditions}
\begin{theorem}\label{smooth_approx_bdy_condition}
Let $r\in\R^+$ and $u\in BV^r(Q)$ be given such that the zero boundary is satisfied, i.e., 
\be
u\in BV^r(Q)\cap H_0^{\ir+1}(Q).
\ee

Then there exists a sequence $\seqn{u_n}\subset C^\infty(Q)\cap BV^r(Q)$ such that 
\begin{equation}
u_n \to u\text{ strongly in }L^1(Q)\text{ and }\limn TV_\ellp^r(u_n) = TV_\ellp^r(u).
\label{convergence Lp}
\end{equation}
\end{theorem}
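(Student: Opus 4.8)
The plan is to bypass the boundary singularities of the Riemann--Liouville derivative by choosing the approximants \emph{compactly supported} in $Q$, using the density of $C_c^\infty(Q)$ in $H_0^{\ir+1}(Q)$. Write $r=\ir+s$; for $r\in\N$ this is the classical statement $SV^k(Q)=BV^k(Q)$ (see Remark~\ref{an_iso_equ}), so I assume $s\in(0,1)$. Since $H_0^{\ir+1}(Q)$ is by definition the closure of $C_c^\infty(Q)$ in the $H^{\ir+1}$-norm, there is $\seqn{u_n}\subset C_c^\infty(Q)$ with $u_n\to u$ in $H^{\ir+1}(Q)$, and I claim these $u_n$ work. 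They obviously lie in $C^\infty(Q)$, and $u_n\in BV^r(Q)$ because, by Lemma~\ref{linftyds}, the $r$-order derivative of a $C_c^\infty(Q)$ function is bounded, so $TV^r(u_n)<+\infty$.

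The key step is an explicit representation of $\nabla^r$ for functions vanishing at $\partial Q$ together with their derivatives up to order $\ir$ --- precisely the content of the hypothesis. On $I=(0,1)$, if $w\in H_0^{\ir+1}(I)$ then $w(0)=0$, so the left Riemann--Liouville derivative coincides with the Caputo derivative, $d^s_L w=\mathbb I^{1-s}\fmp{w'}$; iterating the identity $\frac{d}{dt}\mathbb I^{1-s}v=\mathbb I^{1-s}\fmp{v'}$ (valid as soon as $v(0)=0$) along $v=w,w',\dots,w^{(\ir)}$, all of which vanish at $0$, gives
\be
d^r_L w=\mathbb I^{1-s}\fmp{w^{(\ir+1)}},
\ee
hence, by \eqref{eq_semigroup_frac_int} with $b-a=1$, $\norm{d^r_L w}_{L^1(I)}\leq \Gamma(2-s)^{-1}\norm{w^{(\ir+1)}}_{L^1(I)}$. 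Since $u\in H_0^{\ir+1}(Q)$, each $\partial^\mu u$ with $\abs{\mu}=\ir$ lies in $H_0^1(Q)$, so its one-dimensional slices lie in $H_0^1(I)$ for a.e.\ transverse coordinate; applying the one-dimensional formula slicewise in the $i$-th variable shows that every component $\partial_i^s\partial^\mu u$ of $\nabla^r u$ equals $\mathbb I^{1-s}\fmp{\partial_i\partial^\mu u}$ (the operator $\mathbb I^{1-s}$ acting in the $i$-th variable), so $\nabla^r u\in L^1(Q)$ with $\norm{\partial_i^s\partial^\mu u}_{L^1(Q)}\leq \Gamma(2-s)^{-1}\norm{\partial_i\partial^\mu u}_{L^1(Q)}$; the same bound holds with $u$ replaced by any $u_n\in C_c^\infty(Q)$.

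Granting this, the rest is routine. For every coordinate $i$ and $\abs{\mu}=\ir$,
\bas
\norm{\partial_i^s\partial^\mu u_n-\partial_i^s\partial^\mu u}_{L^1(Q)} &=\norm{\mathbb I^{1-s}\fmp{\partial_i\partial^\mu(u_n-u)}}_{L^1(Q)} \\
&\leq \Gamma(2-s)^{-1}\norm{\partial_i\partial^\mu(u_n-u)}_{L^1(Q)}\leq \Gamma(2-s)^{-1}\norm{u_n-u}_{H^{\ir+1}(Q)}\to 0,
\eas
so $\nabla^r u_n\to\nabla^r u$ strongly in $L^1(Q)$. By Remark~\ref{int_iso_equ} and Remark~\ref{smooth_use_here}, $TV_\ellp^r(u_n)=\int_Q\abs{\nabla^r u_n}_\ellp\,dx$ for the smooth $u_n$; and since $\nabla^r u\in L^1(Q)$, passing to the limit in the smooth integration-by-parts identity $\int_Q u_n\,\divg^r\vp\,dx=(-1)^{\ir+1}\int_Q \nabla^r u_n\cdot\vp\,dx$ (using that $\divg^r\vp$ is bounded for $\vp\in C_c^\infty(Q;\M^{N\times N^\ir})$ by Lemma~\ref{linftyds}, and $\nabla^r u_n\to\nabla^r u$ in $L^1$) yields the same identity for $u$, hence $\nabla^r u$ is the weak $r$-derivative of $u$ and $TV_\ellp^r(u)=\int_Q\abs{\nabla^r u}_\ellp\,dx$. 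Continuity of $\abs{\cdot}_\ellp$ then gives $TV_\ellp^r(u_n)\to TV_\ellp^r(u)$, and $u_n\to u$ in $H^{\ir+1}(Q)\hookrightarrow L^1(Q)$ gives the strong $L^1$-convergence, which is \eqref{convergence Lp}.

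I expect the main obstacle to be the identity $d^r_L w=\mathbb I^{1-s}\fmp{w^{(\ir+1)}}$ and its slicewise version: this is the one and only place where $u\in H_0^{\ir+1}(Q)$ is essential, since without the vanishing of the lower-order traces $w(0),\dots,w^{(\ir-1)}(0)$ the Riemann--Liouville derivative differs from the Caputo one by boundary terms $\sum_{l<\ir}c_l\,t^{l-r}$ that are not even integrable, so $TV^r$ of a generic smooth approximation of $u$ would already be infinite (this is the obstruction flagged in Remark~\ref{an_iso_equ}). The remaining care points --- measurability of the slices, Fubini for the slicewise $\mathbb I^{1-s}$, and the justification of the limit integration by parts --- are standard.
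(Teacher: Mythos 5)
Your proposal is correct and follows essentially the same route as the paper: approximate $u$ by $C_c^\infty(Q)$ functions converging in $H_0^{\ir+1}(Q)$, use the zero boundary conditions to write $\nabla^r u$ as $\mathbb I^{1-s}$ applied slicewise to the classical $(\ir+1)$-order derivatives (the paper writes this as the convolution $(u^{(\ir+1)}\chi_{(0,1)})\ast(|\cdot|^{-s}\chi_{(0,1)})$, which is the same operator), and conclude $L^1$-convergence of the fractional gradients from the $L^1$-boundedness of $\mathbb I^{1-s}$, i.e.\ Young's inequality. The only presentational difference is that you dispatch the general $\ell^p$ case via norm equivalence and continuity of $\abs{\cdot}_{\ell^p}$, whereas the paper computes the optimal test function for each $p$ explicitly; the substance is identical.
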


\begin{proof}
We first prove a regularity result:
\begin{equation}
\frac{\partial^\ir}{\partial x_1^{\alpha_1} \cdots \partial x_n^{\alpha_n} }D_{L x_j}^s u \in L^2(Q),
\label{L2 of derivatives}
\end{equation}
for all $j=1,\cdots,n$ and indices $\alpha_1,\cdots,\alpha_n$ such that $\sum_{k=1}^n\alpha_k=\ir$.\\\\
Let $r>0 $ be given, and assume that $r\in \R^+\setminus \N$. Let $s:=r-\ir$, and by integration by parts, for any $\vp\in C_c^\infty$, there holds
\begin{align*}
TV^r(u)&=\sup\bigg\{ \int_Q u \,\divg^s (\divg^\ir \varphi) dx: |\varphi|\le1, \varphi\in C_c^\infty(Q)\bigg\}\\
&=\sup\bigg\{ (-1)^{\ir+1}\int_Q D^\ir (D^su) \cdot \varphi dx: |\varphi|\le1, \varphi\in C_c^\infty(Q)\bigg\}. 
\end{align*}
Note that $u\in H_0^{\ir+1}(Q)$ gives $D^\ir (D^su)\in L^2(Q)$: let $x=(x_1,\cdots,x_n)$, by change of variable
$y=x_j-t$, we get
\begin{align*}
&\frac{\partial^\ir}{\partial x_1^{\alpha_1} \cdots \partial x_n^{\alpha_n} }D_{L x_j}^s u(x)\\
&=\frac{\partial^\ir}{\partial x_1^{\alpha_1} \cdots \partial x_n^{\alpha_n} }
\frac{\partial}{\partial x_j} \int_0^{x_j}\frac{ u(x_1,\cdots,x_{j-1},t,x_{j+1},\cdots,x_{n}) }{(x_j-t)^s} dy\\
&=\frac{\partial^\ir}{\partial x_1^{\alpha_1} \cdots \partial x_n^{\alpha_n} }
\frac{\partial}{\partial x_j}\bigg(\int_0^{x_j}\frac{ u(x_1,\cdots,x_{j-1},x_j-y,x_{j+1},\cdots,x_{n}) }{y^s} dy  \bigg)\\
&=\frac{\partial^\ir}{\partial x_1^{\alpha_1} \cdots \partial x_n^{\alpha_n} }
\bigg(\int_0^{x_j}\frac{ \frac{\partial}{\partial x_j}u(x_1,\cdots,x_{j-1},x_j-y,x_{j+1},\cdots,x_{n}) }{y^s} dy \\
&\qquad + \frac{ \frac{\partial}{\partial x_j}u(x_1,\cdots,x_{j-1},x_j-y,x_{j+1},\cdots,x_{n}) }{y^s}\bigg|_{y=x_j}\bigg),
\end{align*}
where the boundary term
\[\frac{ \frac{\partial}{\partial x_j}u(x_1,\cdots,x_{j-1},x_j-y,x_{j+1},\cdots,x_{n}) }{y^s}\bigg|_{y=x_j}\equiv 0 \]
due to $u\in H_0^{\ir+1}(Q)$. Thus
\begin{align*}
&\frac{\partial^\ir}{\partial x_1^{\alpha_1} \cdots \partial x_n^{\alpha_n} }D_{L x_j}^s u(x)\\
&=\frac{\partial^\ir}{\partial x_1^{\alpha_1} \cdots \partial x_n^{\alpha_n} }
\int_0^{x_j}\frac{ \frac{\partial}{\partial x_j}u(x_1,\cdots,x_{j-1},x_j-y,x_{j+1},\cdots,x_{n}) }{y^s} dy,
\end{align*}
and by taking all the derivatives, we get
\begin{align}
&\frac{\partial^\ir}{\partial x_1^{\alpha_1} \cdots \partial x_n^{\alpha_n} }D_{L x_j}^s u(x)\\
&=
\int_0^{x_j}y^{-s}
\frac{\partial^\ir}{\partial x_1^{\alpha_1} \cdots \partial x_n^{\alpha_n} }
 \frac{\partial}{\partial x_j}u(x_1,\cdots,x_{j-1},x_j-y,x_{j+1},\cdots,x_{n})  dy \notag\\
& =\int_{\mathbb{R}}y^{-s}\chi_{(0,1)}(y)
\frac{\partial^\ir}{\partial x_1^{\alpha_1} \cdots \partial x_n^{\alpha_n} }
 \frac{\partial}{\partial x_j}u(x_1,\cdots,x_{j-1},x_j-y,x_{j+1},\cdots,x_{n}) \chi_{(0,1)}(x_j-y) dy\notag\\
 &= \bigg[(|\cdot|^{-s}\chi_{(0,1)}) * \Big(\chi_{(0,1)}\frac{\partial^\ir}{\partial x_1^{\alpha_1} \cdots \partial x_n^{\alpha_n} }
 \frac{\partial u}{\partial x_j} \Big)\bigg](x).
\end{align}
Since $u\in H_0^{\ir+1}(Q)$, we get 
\[\chi_{(0,1)}\frac{\partial^\ir}{\partial x_1^{\alpha_1} \cdots \partial x_n^{\alpha_n} }
 \frac{\partial u}{\partial x_j} \in L^2(Q), \]
and since clearly $|\cdot|^{-s}\chi_{(0,1)}\in L^1(0,1)$, 
 \eqref{L2 of derivatives} is proven.\\\\
Now we turn our attention to \eqref{convergence Lp}.
We need only to discuss the left-sided RL derivative, as the discussion for the right-sided RL derivative
is completely analogous.\\\\
We first present the 1D case. 
Note that
 \begin{align}
\frac{d^\ir}{dx^\ir} D_L^s u(x) &=  \frac{1}{\Gamma(1-s)} \frac{d^\ir}{dx^\ir} \bigg(\frac{d}{dx}\int_0^x u(t)(x-t)^{-s} dt\bigg)\notag\\
&\overset{u^{(k)}(0)=0,\ k=0,\cdots,\ir}=\frac{1}{\Gamma(1-s)} \int_0^x u^{(\ir+1)}(t)(x-t)^{-r} dt\notag\\
 &= \frac{1}{\Gamma(1-s)}\int_{-\infty}^{+\infty} u^{(\ir+1)}(t)\chi_{(0,1)}(t)  (x-t)^{-s}\chi_{(0,1)}(x-t) dt\notag\\
 &=\frac{1}{\Gamma(1-s)} \Big[(u^{(\ir+1)}\chi_{(0,1)})*(|\cdot|^{-s}\chi_{(0,1)}) \Big](x)\label{1st form},
 \end{align}
 which implies that 
 \begin{align}
\bigg\|\frac{d^\ir}{dx^\ir} D_L^s u\bigg\|_{L^1(I)} 
 = \frac{1}{\Gamma(1-s)} \Big\|(u^{(\ir+1)}\chi_{(0,1)})*(|\cdot|^{-s}\chi_{(0,1)}) \Big\|_{L^1(I)}.
  \end{align}
  Let $u_n\subseteq C_c^\infty(I)$ be a sequence of smooth functions such that $u_n\to u$
  strongly in $H_0^{\ir+1}(I)$. Then 
  \begin{align*}
   \bigg\|\frac{d^\ir}{dx^\ir} D_L^s u_n\bigg\|_{L^1(I)} & \le 
   \bigg\|\frac{d^\ir}{dx^\ir} D_L^s u\bigg\|_{L^1(I)} + \bigg\|\frac{d^\ir}{dx^\ir} D_L^s (u-u_n)\bigg\|_{L^1(I)} ,
  \end{align*}
where
\begin{align*}
&\bigg\|\frac{d^\ir}{dx^\ir} D_L^s (u-u_n)\bigg\|_{L^1(I)} \\
&= \frac{1}{\Gamma(1-s)} \Big\|((u-u_n)^{(\ir+1)}\chi_{(0,1)})*(|\cdot|^{-s}\chi_{(0,1)}) \Big\|_{L^1(I)} \\
&\le \frac{1}{\Gamma(1-s)} \Big\|((u-u_n)^{(\ir+1)}\chi_{(0,1)})\Big\|_{L^1(I)} \Big\|(|\cdot|^{-s}\chi_{(0,1)}) \Big\|_{L^1(I)}
\to 0,
\end{align*}
since 
\[\Big\|((u-u_n)^{(\ir+1)}\chi_{(0,1)})\Big\|_{L^1(I)} \to 0\]
as $u_n\to u$ strongly in $H_0^{\ir+1}(I)$, and clearly 
\[\Big\|(|\cdot|^{-s}\chi_{(0,1)}) \Big\|_{L^1(I)} = \frac{1}{1-s}. \]
 Now we deal with the case that $N\geq 2$. The proof is essentially analogous to the 1D case: by integration by parts,
 \begin{align}
 TV^r(u)&=\sup_{|\varphi|\le 1,\ \varphi\in C_c^\infty(Q)} 
 \int_Q u\, \divg^s (\divg^\ir \varphi) dx \\
 &= 
 \sup_{|\varphi|\le 1,\ \varphi\in C_c^\infty(Q)}(-1)^{\ir+1}\int_Q D^\ir (D^su) \cdot \varphi \,dx,
 \end{align}
 and, in view of Remark \ref{smooth_use_here}, it suffices to discuss each component of $D^\ir (D^su)$ separately. That is,
 we will show the existence of a sequence $u_n\subseteq C_c^\infty(Q)$ such that
 \[ \bigg\|\frac{\partial^\ir}{\partial x_1^{\alpha_1} \cdots \partial x_n^{\alpha_n} }D_{L x_j}^s u_n \bigg\|_{L^1(Q)}
 \le \bigg\|\frac{\partial^\ir}{\partial x_1^{\alpha_1} \cdots \partial x_n^{\alpha_n} }D_{L x_j}^s u \bigg\|_{L^1(Q)} +\varepsilon_n,\qquad
 \sum_{k=1}^n\alpha_k=\ir\]
 where $\varepsilon_n\to0$ as $n\to+\infty$. Here $D_{L x_j}^s$ denotes the RL derivative in the $x_j$ coordinate. 
 We discuss the case $j=1$, as the cases $j=2,\cdots,n$ are analogous.\\\\
 Similarly to \eqref{1st form}, by integration by parts and using the zero boundary condition of $u$ (and its derivatives, up to the $\ir$-th order), we get
  \begin{align*}
  &\frac{\partial^\ir}{\partial x_1^{\alpha_1} \cdots \partial x_n^{\alpha_n} }D_{L x_1}^s u(x_1,\cdots,x_n) \\
 &= \frac{1}{\Gamma(1-s)}\int_0^{x_1} \bigg[\frac{\partial^\ir}{\partial x_1^{\alpha_1} \cdots \partial x_n^{\alpha_n} }  \frac{\partial }{\partial x_1} u(t,x_2,\cdots,x_n) \bigg](x_1-t)^{-s} dt\\
 &=\frac{1}{\Gamma(1-s)} \bigg[  \bigg(\frac{\partial^\ir}{\partial x_1^{\alpha_1} \cdots \partial x_n^{\alpha_n} }  \frac{\partial }{\partial x_1} u\bigg)\chi_{(0,1)} * |\cdot|^{-s}\chi_{(0,1)} \bigg](x_1,\cdots,x_n). 
  \end{align*}
Thus a sequence $u_n\subseteq C_c^\infty(Q)$ such that  $u_n\to u$ strongly in $H_0^{\ir+1}(Q)$ gives
\begin{align*}
& \bigg\|\frac{\partial^\ir}{\partial x_1^{\alpha_1} \cdots \partial x_n^{\alpha_n} }D_{L x_1}^s u_n \bigg\|_{L^1(Q)}\\
 &\le \bigg\|\frac{\partial^\ir}{\partial x_1^{\alpha_1} \cdots \partial x_n^{\alpha_n} }D_{L x_1}^s u \bigg\|_{L^1(Q)}+
 \bigg\|\frac{\partial^\ir}{\partial x_1^{\alpha_1} \cdots \partial x_n^{\alpha_n} }D_{L x_1}^s (u-u_n) \bigg\|_{L^1(Q)},
\end{align*}
 where
 \begin{align*}
 &\bigg\|\frac{\partial^\ir}{\partial x_1^{\alpha_1} \cdots \partial x_n^{\alpha_n} } D_{L x_1}^s (u-u_n) \bigg\|_{L^1(Q)} \\
 &=
 \frac{1}{\Gamma(1-s)} \bigg\|  \bigg(\frac{\partial^\ir}{\partial x_1^{\alpha_1} \cdots \partial x_n^{\alpha_n} }  \frac{\partial }{\partial x_1} (u-u_n)\bigg)\chi_{(0,1)} * |\cdot|^{-s}\chi_{(0,1)} \bigg\|_{L^1(Q)}\\
 &\le\frac{1}{\Gamma(1-s)} \bigg\|  \bigg(\frac{\partial^\ir}{\partial x_1^{\alpha_1} \cdots \partial x_n^{\alpha_n} }  \frac{\partial }{\partial x_1} (u-u_n)\bigg)\chi_{(0,1)}  \bigg\|_{L^1(Q)}\cdot
 \bigg\| |\cdot|^{-s}\chi_{(0,1)} \bigg\|_{L^1(Q)} \to 0.
 \end{align*}
Thus any sequence $u_n\subseteq C_c^\infty(Q)$ such that  $u_n\to u$ strongly in $H_0^{\ir+1}(Q)$ is acceptable.\\\\
Now we turn our attention to the case of $p$-norm, with $p>1$. Like
in the previous case (where we had $p=2$), we again discuss first the case with
 $s\in (0,1)$, in the generic $n$ dimensional case, and only with the left-sided RL derivative 
 (since the proof for the right-sided RL derivative is analogous). \\\\
 Recall that
\[TV_\ellp^s(u):= \sup\bigg\{\int_Q u \divg^s \varphi dx:\varphi \in C_c^\infty(Q),  |\varphi|_\ellp^\ast \le 1\bigg\}.\]
Like in the case of $p=2$, where we had the optimal $\varphi=- {D_L^s u}/{|D_L^s u|}$ if we had sufficient regularity. For $p>1$, we first see what
the optimal $\varphi$ should be, if $u$ had sufficient regularity. 
We first prove
\begin{equation}
 \int_Q D_{L x_j}^s u \cdot \varphi dx \le \int_Q|D_{L x_j}^s u(x)|_{{\frac{p}{p-1}}} dx,\qquad
 j=1,\cdots,n.
 \label{p norm phi}
\end{equation}
We write
\[ \int_Q D_{L x_j}^s u \cdot \varphi dx = \sum_{i=1}^n \int_Q (D_{L x_j}^s u)_i(x) \varphi_i(x) dx,  \]
and for each coordinate, and fixed $x$, we aim to find the optimal $\varphi_i$. Let
$a_i:=(D_{L x_j}^s u)_i(x)$, $b_i:=\varphi_i(x)$, so our aim is to find
\begin{equation}
\max \big\{ \sum_{i=1}^n a_i b_i :\sum_{i=1}^n |b_i|^p \le 1\Big\}, \label{max1}
\end{equation}
which easily reduces to
\[\max \big\{ \sum_{i=1}^n a_i b_i :\sum_{i=1}^n |b_i|^p = 1\Big\}. \]
Let
\[F(b_1,\cdots,b_n,\lambda):= \sum_{i=1}^n a_i b_i -\lambda\Big( \sum_{i=1}^n |b_i|^p - 1 \Big),\]
and
\[\frac{\partial F}{\partial \lambda} = 1-\sum_{i=1}^n |b_i|^p =0,
\qquad
\frac{\partial F}{\partial b_i} = a_i -\lambda p b_i|b_i|^{p-2}=0,\quad i=1,\cdots,n, \]
which gives $b_i|b_i|^{p-2}=\dfrac{a_i}{ \lambda p} $, with $\lambda$ 
to be determined by condition $\sum_{i=1}^n |b_i|^p = 1$. Direct computation gives
\begin{align*}
1=\sum_{i=1}^n |b_i|^p &=  \sum_{i=1}^n \bigg(\frac{|a_i|}{ \lambda p}\bigg)^{\frac{p}{p-1}}
\Longrightarrow\lambda^{\frac{p}{p-1}} =\frac1p \bigg( \sum_{i=1}^n |a_i|^{\frac{p}{p-1}}\bigg) ^{\frac{p-1}p}\\
&\Longrightarrow b_i|b_i|^{p-2}=|a_i|\bigg( \sum_{i=1}^n |a_i|^{\frac{p}{p-1}}\bigg) ^{-\frac{p-1}p}.
\end{align*}
Note that $\sum_{i=1}^n a_i b_i$ is maximum when $a_i$ and $b_i$ have the same sign. Thus
\[b_i  = a_i |a_i|^{\frac1{p-1}-1}\bigg( \sum_{i=1}^n |a_i|^{\frac{p}{p-1}}\bigg) ^{-\frac1p},\]
which gives
\[\sum_{i=1}^n a_i b_i = \frac{\sum_{i=1}^n |a_i|^{\frac{p}{p-1}}}{( \sum_{i=1}^n |a_i|^{\frac{p}{p-1}})^{1/p}}
=\Big( \sum_{i=1}^n |a_i|^{\frac{p}{p-1}} \Big)^{\frac{p-1}p},\]
hence
\[\sum_{i=1}^n(D_{L x_j}^s u)_i(x) \varphi_i(x) \le 
\Big( \sum_{i=1}^n |(D_{L x_j}^s u)_i(x)|^{\frac{p}{p-1}} \Big)^{\frac{p-1}p} = |D_{L x_j}^s u(x)|_{\frac{p}{p-1}},\]
and
\begin{align}
&TV^s_\ellp(u) \le \sum_{j=1}^n\int_Q|D_{L x_j}^s u(x)|_{\frac{p}{p-1}} dx = \|D_{L}^s u\|_{L_p^1(Q)}, \\
 &L_p^1(Q):=\bigg\{f:Q\to \mathbb{R}: \int_Q|f|_{\frac{p}{p-1}} dx<+\infty \bigg\},
\label{Lp case}
\end{align}
which proves \eqref{p norm phi}.
Due to the equivalence of all norms $|\cdot|_{\frac{p}{p-1}}$, the space $L_p^1(Q)=L^1(Q)$. Then, since
we assumed $u\in H_0^1(Q)$, similarly to the case $p=2$ we get that
\begin{equation}
 TV^s_p(u) = \|D_{L}^s u\|_{L_p^1(Q)}. \label{equality 1}
 \end{equation}
Note our arguments can be extended to more general $r=\ir +s$, $s\in (0,1)$: in this case
we would have, by integration by parts,
\begin{align*}
TV_\ellp^r(u) &= 
\sup\bigg\{\int_Q u \divg^s \varphi dx:\varphi \in C_c^\infty(Q),  |\varphi|_p \le 1\bigg\}
\\
&=\sup\bigg\{\int_Q D^\ir(D_L^s u) \cdot \varphi dx:\varphi \in C_c^\infty(Q),  |\varphi|_p \le 1\bigg\},
\end{align*}
and all the computations from \eqref{max1} to \eqref{equality 1} can be repeated, now
with the roles of $a_i$ and $b_i$ replaced by
\[a_{\alpha_1,\cdots,\alpha_n,i}:=\frac{\partial^\ir}{\partial x_1^{\alpha_1} \cdots \partial x_n^{\alpha_n} } D_{Lx_i}^s u(x),
\qquad b_{\alpha_1,\cdots,\alpha_n,i}:=\varphi_{\alpha_1,\cdots,\alpha_n,i}(x),\]
which at the end will give the analogue of \eqref{equality 1} for the case $r>1$, i.e.
\[  TV^r_{\ell^p}(u) = \sum_{i=1}^n \sum_{\substack{\alpha_1,\cdots,\alpha_n \ge 0 \\ \alpha_1+\cdots+\alpha_n =\ir} }
\bigg\|\frac{\partial^\ir}{\partial x_1^{\alpha_1} \cdots \partial x_n^{\alpha_n} } D_{Lx_i}^s u\bigg\|_{L_p^1(Q)}\]
Now we discuss the case $p=1$, where the previous computations would need additional care due to the presence
of $\frac{p}{p-1}$ in several places. However, the discussion will be quite similar to the previous
case. For clarity purposes, we start by discussing the case $r\in (0,1)$. 
Note that, quite similarly to the case $p>1$, we choose a fixed $x$, and let 
$a_k$ be the components of $D_L^s u(x)$, and $b_k$ be the components of $\varphi(x)$. Thus we have to find
\[\max \sum_{k=1}^n a_kb_k \qquad \text{subject to } \qquad \sum_{k=1}^n |b_k|=1.\]
Let 
\[F=F(b_1,\cdots,b_n,\lambda):= \sum_{k=1}^n a_kb_k -\lambda\Big(\sum_{k=1}^n |b_k|-1\Big),\]
and 
\[\frac{\partial F}{\partial b_k} = a_k-\lambda \frac{b_k}{|b_k|},\quad k=1,\cdots,n,\qquad 
\frac{\partial F}{\partial \lambda} = 1-\sum_{i=1}^n |b_i|^p =0.\]
Unless $a_1=\cdots=a_n=a$, which easily gives 
\[\max \bigg\{\sum_{k=1}^n a_kb_k : \sum_{k=1}^n |b_k|=1 \bigg\} = |a|, \]
we note that it is not possible to get
$\frac{\partial F}{\partial b_k} =0$ for all $k$. Hence no critical point is present, and the maximum
value is attained on the boundary of the set $\{(b_1,\cdots,b_n):\sum_{k=1}^n |b_k|=1\}$. Such boundary is easily
shown to be the set
\[\{b_1,\cdots,b_n:b_j=\pm1 \text{ for an index } j, \text{ and } b_k=0 \text{ for all }k\neq j\},\]
it follows immediately that
\[\max \bigg\{\sum_{k=1}^n a_kb_k : \sum_{k=1}^n |b_k|=1 \bigg\} = \sup_{k} |a_k|. \]
Thus, by integrating on $Q$, we have
\[\int_Q D_L^s u \varphi dx \le  \int_Q |D_L^s u|_\infty dx, \]
which is the analogue of \eqref{Lp case}, and similarly, due to the density of $C_c^\infty(Q)$ functions in $L^p(Q)$,
we conclude again that
\[\int_Q D_L^s u \varphi dx =  \int_Q |D_L^s u|_\infty dx, \]
which is the analogue of \eqref{equality 1}. 
The case of $r>1$ is analogous: we repeat the same arguments, with $a_k $ and $b_k$ replaced by
$a_{\alpha_1,\cdots,\alpha_n,i}$ (representing the components of $D^\ir D^s_L u(x)$) and $b_{\alpha_1,\cdots,\alpha_n,i}$
(representing the components of $\varphi$).
This concludes the discussion of the case $p=1$.\\\\
All the subsequent arguments from the case $p=2$ can be applied straightforwardly, component-wise:
given a sequence $u_n\subseteq C_c^\infty(Q)$ such that $u_n\to u$ strongly in $H_0^{\ir+1}(Q)$,
  we have
\begin{align*}
& \bigg\|\frac{\partial^\ir}{\partial x_1^{\alpha_1} \cdots \partial x_n^{\alpha_n} }D_{L x_1}^s u_n \bigg\|_{L_p^1(Q)}\\
 &\le \bigg\|\frac{\partial^\ir}{\partial x_1^{\alpha_1} \cdots \partial x_n^{\alpha_n} }D_{L x_1}^s u \bigg\|_{L_p^1(Q)}+
 \bigg\|\frac{\partial^\ir}{\partial x_1^{\alpha_1} \cdots \partial x_n^{\alpha_n} }D_{L x_1}^s (u-u_n) \bigg\|_{L_p^1(Q)},
\end{align*}
 where, due to the equivalence between the norms $ \| \cdot \|_{L_p^1(Q)}$ and $ \| \cdot \|_{L^1(Q)}$,
 \begin{align*}
& \bigg\|\frac{\partial^\ir}{\partial x_1^{\alpha_1} \cdots \partial x_n^{\alpha_n} } D_{L x_1}^s (u-u_n) \bigg\|_{L_p^1(Q)} \\
 &=
 \frac{1}{\Gamma(1-s)} \bigg\|  \bigg(\frac{\partial^\ir}{\partial x_1^{\alpha_1} \cdots \partial x_n^{\alpha_n} }  \frac{\partial }{\partial x_1} (u-u_n)\bigg)\chi_{(0,1)} * |\cdot|^{-s}\chi_{(0,1)} \bigg\|_{L_p^1(Q)}\\
 &\le\frac{1}{\Gamma(1-s)} \bigg\|  \bigg(\frac{\partial^\ir}{\partial x_1^{\alpha_1} \cdots \partial x_n^{\alpha_n} }  \frac{\partial }{\partial x_1} (u-u_n)\bigg)\chi_{(0,1)}  \bigg\|_{L_p^1(Q)}
 \bigg\| |\cdot|^{-s}\chi_{(0,1)} \bigg\|_{L_p^1(Q)} \to 0.
 \end{align*}
 The proof is thus complete.
\end{proof}

\begin{corollary}
Let $r\in\R^+$ be given. Then we have
\be
SV^r(Q)\cap H_0^{\ir+1}(Q)=BV^r(Q)\cap H_0^{\ir+1}(Q).
\ee
\end{corollary}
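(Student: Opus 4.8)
The plan is to read the corollary off directly from Theorem \ref{smooth_approx_bdy_condition} and the definition of $SV^r(Q)$ in Definition \ref{strict_conv_BVr}, so essentially no new computation is needed.

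First I would dispose of the inclusion $SV^r(Q)\cap H_0^{\ir+1}(Q)\subseteq BV^r(Q)\cap H_0^{\ir+1}(Q)$. By \eqref{BV_r_p_diff_ss}, any $u\in SV^r(Q)$ is, for each $p\in[1,+\infty]$, an ss-$(p)$ limit of a sequence $\seqn{u_n}\subset C^\infty(Q)$; since ss-$(p)$ convergence forces $TV^r_{\ellp}(u_n)\to TV^r_{\ellp}(u)$ with all terms finite, the limit satisfies $TV^r_{\ellp}(u)<+\infty$ for every $p$, i.e. $u\in BV^r(Q)$ in the sense of \eqref{BV_r_p_diff}. Intersecting with $H_0^{\ir+1}(Q)$ on both sides gives the inclusion.

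For the reverse inclusion I would take $u\in BV^r(Q)\cap H_0^{\ir+1}(Q)$ and invoke Theorem \ref{smooth_approx_bdy_condition}: its conclusion \eqref{convergence Lp} yields a sequence $\seqn{u_n}\subset C^\infty(Q)\cap BV^r(Q)$ with $u_n\to u$ strongly in $L^1(Q)$ and $\limn TV^r_{\ellp}(u_n)=TV^r_{\ellp}(u)$, which by Definition \ref{strict_conv_BVr} is exactly $u_n\wtogsp u$. The point I would emphasize is that one and the same sequence serves for all $p\in[1,+\infty]$ simultaneously: in the proof of Theorem \ref{smooth_approx_bdy_condition} the approximants are chosen as any $\seqn{u_n}\subset C_c^\infty(Q)$ with $u_n\to u$ strongly in $H_0^{\ir+1}(Q)$, a selection made before the parameter $p$ enters any estimate. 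Hence $u\in\overline{C^\infty(Q)}^{\text{ ss-}(p)}$ for every $p$, so $u\in\bigcap_{p\in[1,+\infty]}\overline{C^\infty(Q)}^{\text{ ss-}(p)}=SV^r(Q)$; combined with $u\in H_0^{\ir+1}(Q)$ this puts $u$ in $SV^r(Q)\cap H_0^{\ir+1}(Q)$, and the two inclusions together give the asserted equality.

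The only (minor) obstacle is precisely the uniformity in $p$ just noted: if Theorem \ref{smooth_approx_bdy_condition} were used as producing, for each fixed $p$, a possibly $p$-dependent sequence, one could not conclude membership in the intersection defining $SV^r(Q)$. This is resolved simply by pointing to the $p$-independent construction in that theorem's proof; beyond that, the argument is a formal consequence of the definitions.
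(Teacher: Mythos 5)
Your argument is correct and is essentially the paper's own proof, which simply cites Theorem \ref{smooth_approx_bdy_condition} together with Definition \ref{strict_conv_BVr}; your additional remark that the approximating sequence in that theorem is chosen before $p$ enters the estimates, and hence works for all $p\in[1,+\infty]$ simultaneously, is a valid and worthwhile clarification of why membership in the intersection defining $SV^r(Q)$ follows.
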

\begin{proof}
This is a direct result from Theorem \ref{smooth_approx_bdy_condition} and Definition \ref{strict_conv_BVr}.
\end{proof}

\subsection{Center-sided operations}
\begin{define}
as well as the \emph{central-sided Riemann-Liouville derivative} (of order $r\in\R^+$) 
\be
d^{r}w(x) = \frac12 \fmp{d^{r}_{L}w(x)+(-1)^{\ir+1}d^{r}_{R}w(x)}.
\ee
\end{define}

\subsection{Upcoming works}
As mentioned in introduction, in our follow up work \cite{panxinyang2018realordertraining} we shall concentrate on analyzing the training scheme \eqref{S_scheme_L1_intro}-\eqref{S_scheme_L2_intro} and provide extension numerical simulations. Moreover, we shall introduce a finite approximation of such scheme which will help on numerical realization of Level 1 problem in \eqref{S_scheme_L1_intro}.\\\\
We also propose to further extend our definition of fractional order total variation $TV^r$, by considering
 the generalized fractional derivative introduced in \cite{doi:10.1137/17M1160318}, and study the relevant properties.

\section*{Acknowledgements}
The work of Pan Liu has been supported by the Centre of Mathematical Imaging and Healthcare and funded by the Grant ``EPSRC Centre for Mathematical and Statistical Analysis of Multimodal Clinical Imaging" with No. EP/N014588/1.
Xin Yang Lu acknowledges the support of NSERC Grant 
{\em ``Regularity of minimizers and pattern formation in geometric minimization problems''},
and of the Startup funding, and Research Development Funding
of Lakehead University.

\bibliographystyle{abbrv}
\bibliography{RTV_theory_arXiv}{}

\def\cprime{$'$}
\begin{thebibliography}{10}

\bibitem{1751-8121-40-24-003}
O.~P. Agrawal.
\newblock Fractional variational calculus in terms of riesz fractional
  derivatives.
\newblock {\em Journal of Physics A: Mathematical and Theoretical},
  40(24):6287, 2007.

\bibitem{MR3684877}
M.~Bergounioux, A.~Leaci, G.~Nardi, and F.~Tomarelli.
\newblock Fractional {S}obolev spaces and functions of bounded variation of one
  variable.
\newblock {\em Fract. Calc. Appl. Anal.}, 20(4):936--962, 2017.

\bibitem{brezis2010functional}
H.~Brezis.
\newblock {\em Functional analysis, {S}obolev spaces and partial differential
  equations}.
\newblock Universitext. Springer, New York, 2011.

\bibitem{chen2013fractional}
D.~Chen, S.~Sun, C.~Zhang, Y.~Chen, and D.~Xue.
\newblock Fractional-order {$TV$}-{$L^2$} model for image denoising.
\newblock {\em Central European Journal of Physics}, 11(10):1414--1422, 2013.

\bibitem{liu2016weightedreg}
E.~Davoli and P.~Liu.
\newblock One dimensional fractional order {$TGV$}: gamma-convergence and
  bilevel training scheme.
\newblock {\em \em Commun. Math. Sci.}, 16(1):213--237, 2018.

\bibitem{domke2012generic}
J.~Domke.
\newblock Generic methods for optimization-based modeling.
\newblock In N.~D. Lawrence and M.~A. Girolami, editors, {\em Proceedings of
  the Fifteenth International Conference on Artificial Intelligence and
  Statistics (AISTATS-12)}, volume~22, pages 318--326, 2012.

\bibitem{domke2013learning}
J.~Domke.
\newblock Learning graphical model parameters with approximate marginal
  inference.
\newblock {\em IEEE Transactions on Pattern Analysis and Machine Intelligence},
  35(10):2454--2467, Oct 2013.

\bibitem{evans2015measure}
L.~C. Evans and R.~F. Gariepy.
\newblock {\em Measure theory and fine properties of functions}.
\newblock CRC press, 2015.

\bibitem{MR3144452}
D.~Idczak and S.~a. Walczak.
\newblock Fractional {S}obolev spaces via {R}iemann-{L}iouville derivatives.
\newblock {\em J. Funct. Spaces Appl.}, pages Art. ID 128043, 15, 2013.

\bibitem{MR2237634}
G.~Jumarie.
\newblock Modified {R}iemann-{L}iouville derivative and fractional {T}aylor
  series of nondifferentiable functions further results.
\newblock {\em Comput. Math. Appl.}, 51(9-10):1367--1376, 2006.

\bibitem{doi:10.1137/17M1160318}
L.~Li and J.~Liu.
\newblock A generalized definition of caputo derivatives and its application to
  fractional odes.
\newblock {\em SIAM Journal on Mathematical Analysis}, 50(3):2867--2900, 2018.

\bibitem{panxinyang2018realordertraining}
X.~Y. Lu and P.~Liu.
\newblock Real order (an)-isotropic total variation in image processing - {Part
  II: Gamma} convergence and learning of optimal orders.
\newblock {\em in preparation}.

\bibitem{MR1219954}
K.~S. Miller and B.~Ross.
\newblock {\em An introduction to the fractional calculus and fractional
  differential equations}.
\newblock A Wiley-Interscience Publication. John Wiley \& Sons, Inc., New York,
  1993.

\bibitem{podlubny2000matrix}
I.~Podlubny.
\newblock Matrix approach to discrete fractional calculus.
\newblock {\em Fract. Calc. Appl. Anal.}, 3(4):359--386, 2000.

\bibitem{rudin1992nonlinear}
L.~I. Rudin, S.~Osher, and E.~Fatemi.
\newblock Nonlinear total variation based noise removal algorithms.
\newblock {\em Phys. D}, 60(1-4):259--268, 1992.
\newblock Experimental mathematics: computational issues in nonlinear science
  (Los Alamos, NM, 1991).

\bibitem{MR1347689}
S.~G. Samko, A.~A. Kilbas, and O.~I. Marichev.
\newblock {\em Fractional integrals and derivatives}.
\newblock Gordon and Breach Science Publishers, Yverdon, 1993.
\newblock Theory and applications, Edited and with a foreword by S. M.
  Nikol\cprime ski\u\i , Translated from the 1987 Russian original, Revised by
  the authors.

\bibitem{tappen2007utilizing}
M.~F. Tappen.
\newblock Utilizing variational optimization to learn markov random fields.
\newblock In {\em 2007 IEEE Conference on Computer Vision and Pattern
  Recognition}, pages 1--8, June 2007.

\bibitem{tappen2007learning}
M.~F. Tappen, C.~Liu, E.~H. Adelson, and W.~T. Freeman.
\newblock Learning gaussian conditional random fields for low-level vision.
\newblock In {\em 2007 IEEE Conference on Computer Vision and Pattern
  Recognition}, pages 1--8, June 2007.

\bibitem{zhang2015total}
J.~Zhang and K.~Chen.
\newblock A total fractional-order variation model for image restoration with
  nonhomogeneous boundary conditions and its numerical solution.
\newblock {\em SIAM J. Imaging Sci.}, 8(4):2487--2518, 2015.

\end{thebibliography}

\end{document}